\numberwithin{equation}{section}
\newtheorem{theorem}{Theorem}[section]
\newtheorem{lemma}[theorem]{Lemma}
\newtheorem{proposition}[theorem]{Proposition}
\theoremstyle{definition}
\newtheorem{remark}[theorem]{Remark}
\newcommand{\per}{\mathrm{per}}
\newcommand{\e}{\mathrm{e}}
\newcommand{\R}{\mathbb R}
\newcommand{\C}{\mathbb C}
\newcommand{\N}{\mathbb N}
\newcommand{\I}{\mathcal I}
\newcommand{\F}{\mathcal F}
\newcommand{\SL}{\mathcal S}
\newcommand{\LL}{\mathcal L}
\newcommand{\T}{\mathcal T}
\newcommand{\M}{\mathcal M}
\newcommand{\Pro}{\mathcal P}
\newcommand{\A}{\mathcal A}
\newcommand{\U}{\mathcal U}
\newcommand{\D}{\mathcal D}
\newcommand{\Cc}{\mathcal C}
\DeclareMathOperator{\im}{im}
\DeclareMathOperator{\sgn}{sgn}
\newcommand{\pp}{p}
\renewcommand{\vec}[1]{\bm{#1}}
\title{Bifurcation analysis for axisymmetric capillary water waves with vorticity and swirl}
\author[Andr\'e H. Erhardt, Erik Wahlén, and Jörg Weber]{Andr\'e H. Erhardt, Erik Wahlén and Jörg Weber}
\address{Andr\'e H. Erhardt \newline
Weierstrass Institute for Applied Analysis and Stochastics, Mohrenstraße 39, 10117 Berlin, Germany}
\email{andre.erhardt@wias-berlin.de}
\address{Erik Wahlén \newline
Centre for Mathematical Sciences, Lund University, P.O.Box 118, 22100 Lund, Sweden}
\email{erik.wahlen@math.lu.se}
\address{Jörg Weber \newline
	Centre for Mathematical Sciences, Lund University, P.O.Box 118, 22100 Lund, Sweden}
\email{jorg.weber@math.lu.se}
\subjclass[2020]{35B07, 35B32, 76B15 (primary), 76B45, 76B47}
\keywords{steady water waves; axisymmetric flows; vorticity; bifurcation}
\thanks{\today}
\begin{document}
\begin{abstract}
We study steady axisymmetric water waves with general vorticity and swirl, subject to the influence of surface tension. This can be formulated as an elliptic free boundary problem in terms of Stokes' stream function. A change of variables allows us to overcome the generic coordinate-induced singularities and to cast the problem in the form \enquote{identity plus compact}, which is amenable to Rabinowitz' global bifurcation theorem, while no restrictions regarding the absence of stagnation points in the flow have to be made. Within the scope of this new formulation, local and global solution curves, bifurcating from laminar flows with a flat surface, are constructed.
\end{abstract}
\maketitle

\section{Introduction}
In the last decades, there has been a lot of progress on the two-dimensional  steady water wave problem with vorticity (see for example \cite{C, EEW2011, Varholm20, Wahlen06, WahlenWeber21} and references therein). The corresponding three-dimensional problem is significantly more challenging, due to the lack of a general formulation which is amenable to methods from nonlinear functional analysis. This is related to the fact that in two dimensions, the vorticity is a scalar field which is constant along streamlines, while in three dimensions it is a vector field which satisfies the vorticity equation, including the vortex stretching term. One approach to at least gain some insight is to investigate flows under under certain geometrical assumptions to fill the gap between two-dimensional and three-dimensional flows. This is one of the motivations for studying the axisymmetric Euler equations, which in many ways behave like the two-dimensional equations. Indeed, for the time-dependent problem, in the swirl-free case, these possess a global existence theory for smooth solutions similar to two-dimensional flows; see \cite{Abidi, UkhovskiiIudovich68} and references therein (note however the recent remarkable result  \cite{Elgindi21} on singularity formation of non-smooth solutions). The steady axisymmetric problem is also of considerable physical importance, as it can be used to model phenomena such as jets, cavitational flows, bubbles and vortex rings (see for example \cite{AltCaffarelliFriedman83, CaffarelliFriedman82, CaoWanZhan21, DoakVandenBroeck18, FraenkelBerger74, Friedman83, Ni, Saffman92, VarvarucaWeiss14} and references therein).

In this paper, we study axisymmetric water waves with surface tension, modelled by assuming that the domain is bounded by a free surface on which capillary forces are acting, and that  in cylindrical coordinates $(r,\vartheta,z)$ the domain and flow are independent of the azimuthal variable $\vartheta$.
In the irrotational  and swirl-free setting, such waves were studied numerically  by Vanden-Broeck et al.~\cite{VB-M-S} and Osborne and Forbes \cite{OsborneForbes01}, who found similarities to two-dimensional capillary waves, including overhanging profiles and  limiting configurations with trapped bubbles at their troughs. 
The small-amplitude theory is intimately connected to Rayleigh's instability criterion for a  liquid jet \cite{Rayleigh1879} (see also \cite{HancockBush02, VB-M-S}), which says that a circular capillary jet is unstable to perturbations whose wavelength exceed the circumference of the jet. Indeed, this instability criterion is satisfied precisely when the dispersion relation for small-amplitude waves has purely imaginary solutions, while steady waves are obtained when the solutions are real \cite{VB-M-S} (that is, for smaller wavelengths). According to Hancock and Bush \cite{HancockBush02} a stationary form of such steady waves may be observed at the base of a jet which is impacting on a reservoir of the same fluid. If the reservoir is contaminated, the wave field is moved up the jet and a so called `fluid pipe' with a quiescent surface is formed at the base. We also note that in recent years there has been increased interest in waves on jets in other physical contexts, such as electrohydrodynamic flows \cite{GrandisonEtAl08} and ferrofluids \cite{BlythParau14, DoakVandenBroeck19, GrovesNilsson18}.

In this paper, we consider liquid jets with both vorticity and swirl. A motivation for this is that a viscous boundary layer in a pipe typically gives rise to vorticity, which may have a significant effect on the jet flowing out of the pipe. As an idealisation, we  assume that the jet extends indefinitely in the $z$-direction and ignore viscosity and gravity.
In the irrotational swirl-free case, the problem can be formulated in terms of a harmonic velocity potential.
In contrast, we formulate the problem in terms of Stokes' stream function, which satisfies a  second-order semilinear elliptic equation known alternatively in the literature as the \textit{Hicks equation}, the \textit{Bragg--Hawthorne equation} or the \textit{Squire--Long equation}, cf.~\cite{Saffman92}. This equation is also known from plasma physics as the \textit{Grad--Shafranov equation}, cf. \cite{ConstantinP}. The first aim of the paper is to construct small-amplitude solutions using local bifurcation theory in this more general context. In contrast to \cite{VB-M-S}, this means that the bifurcation conditions are much less explicit and that we require qualitative methods. The second aim is to construct large-amplitude solutions using global bifurcation theory and a reformulation of the problem inspired by the recent paper \cite{WahlenWeber21} on the  two-dimensional gravity-capillary water wave problem with vorticity.

We now describe the plan of the paper. First in Section~\ref{sec:description} we start by introducing the main problem we are going to study. This means we start with the incompressible Euler equations and recall its axisymmetric version. In Section~\ref{sec:preliminaries}, we discuss regularity issues and trivial solutions of the axisymmetric incompressible Euler equations. Regarding regularity issues and in order to reformulate the problem in a secure functional-analytic setting, we avoid coordinate-induced singularities by introducing a new variable in terms of the Stokes stream function and view it (partly) as a function on five-dimensional space; this trick to overcome this kind of coordinate singularities is well-known and goes back to Ni \cite{Ni}. Then, we study local bifurcations in Section~\ref{sec:localbif} in the spirit of the theorem by Crandall--Rabinowitz, mainly by introducing the so-called good unknown; the main result of this section is Theorem \ref{thm:LocalBifurcation}. In addition, in Section~\ref{sec:conditions} we take a closer look at the conditions for local bifurcation. First we establish spectral properties of the corresponding Sturm--Liouville problem of limit-point type and with boundary condition dependent on the eigenvalue. After that, we investigate some specific examples in more detail. Finally, in Section~\ref{sec:GlobalBifurcation} we close the paper by investigating global bifurcations; see Theorem \ref{thm:GlobalBifurcation}.

Since we require the radius $r$ to be a graph of the longitudinal position $z$ along the water surface, our theoretical framework, in contrast to \cite{WahlenWeber21}, does not allow for overhanging waves, and we leave it to further research to include this possibility. This would clearly be a desirable extension in view of the numerical results in \cite{OsborneForbes01, VB-M-S}.

\section{Description of the problem and the governing equations}\label{sec:description}

We consider periodic axisymmetric capillary waves travelling at constant speed along the $z$ axis. The fluid is assumed to be inviscid and incompressible. In a frame moving with the wave, the flow is therefore governed by the steady incompressible Euler equations
\begin{align}
\begin{split}
(\vec{u}\cdot\nabla)\vec{u}&=-\nabla \pp,
\\
\hfill\nabla\cdot\vec{u}&=0,
\end{split}\quad \vec{x}=(x,y,z)^T\in\Omega\subseteq\R^3\label{Euler}
\end{align}
where $\vec{u}=\vec{u}(\vec{x})$ and $\pp=\pp(\vec{x})$ denote the velocity and the pressure, respectively, and $\Omega$ is the fluid domain.
In  cylindrical coordinates $(r,\vartheta,z)$, that is, $x=r\cos\vartheta$, $y=r\sin\vartheta$ and $z=z$, the velocity field $\vec{u}$ is expressed as
$$
\vec{u}=u^r(r,z)\vec{e}_r+u^\vartheta(r,z)\vec{e}_\vartheta+u^z(r,z)\vec{e}_z,
$$
where the vectors 
$$
\vec{e}_r=\left(\frac{x}{r},\frac{y}{r},0\right)^T,\quad\vec{e}_\vartheta=\left(-\frac{y}{r},\frac{x}{r},0\right)^T\quad\text{and}\quad\vec{e}_z=\left(0,0,1\right)^T
$$
form an orthonormal basis. Note that we allow for non-zero swirl, $u^\vartheta\ne 0$. From the incompressibility and the axisymmetry of the flow it follows that we can introduce Stokes' stream function $\Psi(r,z)$, such that
$$
u^r=\frac{1}{r}\frac{\partial\Psi}{\partial z}\quad\text{and}\quad u^z=-\frac{1}{r}\frac{\partial\Psi}{\partial r}.
$$
Moreover, the quantity $ru^\vartheta$ is constant along streamlines, which we express as $ru^\vartheta=F(\Psi)$ where $F$ is an arbitrary function.  The steady Euler equations are then equivalent to the  \textit{Bragg--Hawthorne equation}
$$
-\Delta^*\Psi=r^2\gamma(\Psi)+F(\Psi)F'(\Psi),
$$
where $\gamma$ is an arbitrary function and 
\[
\Delta^*\Psi:=\Psi_{rr}-\frac1r\Psi_r+\Psi_{zz}
\]
cf.~\cite[Chapter 3.13]{Saffman92}.
Note that the corresponding vorticity vector is given by
\begin{align*}
\vec{\omega}&=
-\frac{\partial u^\vartheta}{\partial z}\vec{e}_r
+\left(\frac{\partial u^r}{\partial z}-\frac{\partial u^z}{\partial r}
\right) \vec{e}_\vartheta+\frac{1}{r} \frac{\partial (r  u^\vartheta)}{\partial r}\vec{e}_z,
\\
&=-\frac{1}{r} F'(\Psi)\Psi_z \vec{e}_r+\frac1{r} \Delta^* \Psi \vec{e}_\vartheta+\frac{1}{r} F'(\Psi)\Psi_r\vec{e}_z.
\end{align*}

We next consider the boundary conditions.  Assume that the fluid domain is given by $\Omega=\{(r,z)\in\R^2:~0<r<d+\eta(z)\}$ and its boundaries by $\partial\Omega_\mathcal{S}=\{(r,z)\in\R^2:~r=d+\eta(z)\}$ (free surface) and $\partial\Omega_\mathcal{C}=\{(r,z)\in\R^2:~r=0\}$ (center line). Although the latter could be considered as part of the domain, it is sometimes convenient to consider it as a boundary due to the appearance of inverse powers of $r$ in the equations. On the free surface $r=d+\eta(z)$ 
we have the kinematic boundary condition $\vec{u}\cdot\vec{n}=0$, where $\vec{n}=\vec{e}_r - \eta'(z) \vec{e}_z$ denotes a normal vector.
Expressed in terms of $\Psi$, this takes the form $\Psi_z+\eta_z\Psi_r=0$ on $\partial\Omega_\mathcal{S}$.
In addition, we have the dynamic boundary condition $p=-\sigma\kappa$ on $\partial\Omega_\mathcal{S}$
where
\[\kappa=\kappa[\eta]=\frac{\eta_{zz}}{(1+\eta_z^2)^{3/2}}-\frac{1}{(d+\eta)\sqrt{1+\eta_z^2}}\]
is the mean curvature of $\partial\Omega_\mathcal{S}$ and $\sigma>0$ is the coefficient of surface tension. Using Bernoulli's law we can eliminate the pressure and express this as 
$$
\frac{\Psi_r^2+\Psi_z^2+F(\Psi)^2}{2r^2}-\sigma\kappa=Q
$$
on $\partial\Omega_\mathcal{S}$, where $Q$ is the Bernoulli constant.
At the center line $\partial\Omega_\mathcal{C}$ the identity $\Psi_z=ru^r$ shows that $\Psi_z=0$.
Summarising, we have following boundary value problem:
\begin{equation}
\begin{aligned}
\Delta^*\Psi+r^2\gamma(\Psi)+F(\Psi)F'(\Psi)&=0 && \text{in } \Omega,
\\
\frac{\Psi_r^2+\Psi_z^2+F(\Psi)^2}{2r^2}-\sigma\kappa&=Q  &&\text{on }\partial\Omega_\mathcal{S},
\\
\Psi_z+\eta_z\Psi_r&=0 && \text{on } \partial\Omega_\mathcal{S},
\\
\Psi_z&=0 && \text{on } \partial\Omega_\mathcal{C},
\end{aligned}
\label{stationary_axisymmetric_problem}
\end{equation}
%
where  $F$ and $\gamma$ are arbitrary functions of $\Psi$.

\section{Preliminaries}\label{sec:preliminaries}
\subsection{The equations} The last two boundary conditions in \eqref{stationary_axisymmetric_problem} mean that $\Psi$ is constant on both $\partial\Omega_\mathcal{S}$ and $\partial\Omega_\mathcal{C}$. We normalise $\Psi$ such that it vanishes on $\partial\Omega_\mathcal{C}$ and assign the name $m$ to its value on $\partial\Omega_\mathcal{S}$. Thus, we shall deal with the equations
\begin{subequations}
\begin{align}
	\Psi_{rr}-\frac1r\Psi_r+\Psi_{zz}&=-r^2\gamma(\Psi)-F(\Psi)F'(\Psi) &&\text{in }\Omega,\\
	\frac{\Psi_r^2+\Psi_z^2+F(\Psi)^2}{2r^2}-\sigma\kappa&=Q && \text{on }\partial\Omega_\mathcal{S},\\
	\Psi&=m &&\text{on }\partial\Omega_\mathcal{S},\\
	\Psi&=0 &&\text{on }\partial\Omega_\mathcal{C},\label{eq:Psi_r=0}
\end{align}
\end{subequations}
where $Q$ and $m$ are constants. The fluid velocity is given by
\begin{align}\label{eq:relation_velocity_Psi}
	\vec{u}=\frac{F(\Psi)}{r}\vec{e}_\vartheta-\nabla\times(\Psi\vec{e}_\vartheta/r)=\frac{\Psi_z}{r}\vec{e}_r+\frac{F(\Psi)}{r}\vec{e}_\vartheta-\frac{\Psi_r}{r}\vec{e}_z.
\end{align}
Following a trick of Ni \cite{Ni}, we introduce the function $\psi$ via
\begin{align}\label{eq:relation_Psi_psi}
	\Psi=r^2\psi,
\end{align}
In terms of $\psi$, the equations read
\begin{subequations}\label{eq:OriginalEquation_psi}
\begin{align}
	\psi_{rr}+\frac3r\psi_r+\psi_{zz}&=-\gamma(r^2\psi)-\frac{1}{r^2}F(r^2\psi)F'(r^2\psi)&&\text{in }\Omega,\label{eq:OriginalEquation_psi_PDE}\\
	\frac{r^2(\psi_r^2+\psi_z^2)}{2}+\frac{F(r^2\psi)^2}{2r^2}+\frac{2m\psi_r}{r}+\frac{2m^2}{r^4}-\sigma\kappa&=Q &&\text{on }\partial\Omega_\mathcal{S},\label{eq:OriginalEquation_psi_Bernoulli}\\
	\psi&=\frac{m}{r^2} &&\text{on }\partial\Omega_\mathcal{S}.\label{eq:OriginalEquation_psi_top}
\end{align}
\end{subequations}
Notice that we no longer need to impose a condition on $r=0$, provided $\psi$ is continuous at $r=0$, since then \eqref{eq:Psi_r=0} is automatically satisfied for $\Psi$ given by \eqref{eq:relation_Psi_psi}.

\subsection{Regularity issues}
Quite naturally, the fluid velocity $\vec{u}$ should be at least of class $C^1$ (in Cartesian coordinates). Written in terms of $\psi$, \eqref{eq:relation_velocity_Psi} reads
\begin{align}\label{eq:relation_velocity_psi}
	\vec{u}=\frac{F(r^2\psi)}{r}\vec{e}_\vartheta-\nabla\times(r\psi\vec{e}_\vartheta)=r\psi_z\vec{e}_r+\frac{F(r^2\psi)}{r}\vec{e}_\vartheta-(2\psi+r\psi_r)\vec{e}_z.
\end{align}
Due to \cite{Liu2009}, $\vec{u}$ is of class $C^1$ provided $F(r^2\psi)/r$ is of class $C^1$ and $r\psi$ is of class $C^2$, both viewed as functions on $\{(r,z)\in[0,\infty)\times\R:r\le d+\eta(z)\}$, and, moreover, $F(r^2\psi)/r$, $r\psi$, and $(r\psi)_{rr}$ vanish at $r=0$. In view of
\[(r\psi)_r=\psi+r\psi_r,\quad(r\psi)_{rr}=2\psi_r+r\psi_{rr},\]
and
\[(F(r^2\psi)/r)_r=F'(r^2\psi)(2\psi+r\psi_r)-F(r^2\psi)/r^2,\]
it is therefore sufficient to assume
\begin{align}\label{eq:regularity_condition_psi}
	\psi\in C^2(\overline\Omega),\quad \psi_r\big|_{r=0}=0,\
\end{align}
and
\[F\in C^1(\R),\quad F(0)=0.\]
Furthermore, we shall need that the right-hand side of \eqref{eq:OriginalEquation_psi_PDE} is in a Hölder class $C^{0,\alpha}$ if $\psi$ is $C^{0,\alpha}$. To this end, it is sufficient that both $F'$ and $G(x)\coloneqq F(x)/x$ (continuously extended to $x=0$ by $G(0)\coloneqq F'(0)$) are locally Lipschitz continuous in view of
\begin{align}\label{eq:relFG}
	\frac{1}{r^2}F(r^2\psi)=G(r^2\psi)\psi.
\end{align}
Moreover, the nonlinear operator $\F$ introduced later should be of class $C^2$. Hence, we need that $\gamma$, $F$, and $F'$ are locally of class $C^{2,1}$; notice that this condition on $F$ already implies the desired property of $G$ as above. Also, in order to construct trivial solutions, we will need a Lipschitz property of $\gamma$ and $FF'$. Overall we impose the following assumptions on $\gamma$ and $F$:

\begin{align}\label{eq:assumptions_gamma_F}
	\gamma\in C_{\text{loc}}^{2,1}(\R),\quad F\in C_{\text{loc}}^{3,1}(\R),\quad\|\gamma'\|_\infty<\infty,\quad\|(FF')'\|_\infty<\infty,\quad F(0)=0.
\end{align}

\subsection{Trivial solutions}\label{sec:trivial}
We now have a look at trivial solutions of \eqref{eq:OriginalEquation_psi}, that is, solutions of \eqref{eq:OriginalEquation_psi} independent of $z$. Therefore, we consider the (singular) Cauchy problem
\begin{subequations}\label{eq:trivial}
\begin{align}
	\psi_{rr}+\frac3r\psi_r&=-\gamma(r^2\psi)-\frac{1}{r^2}F(r^2\psi)F'(r^2\psi)\quad\text{on }(0,d],\label{eq:trivial_ode}\\
	\psi(0)&=\lambda,\label{eq:trivial_initial_value}\\
	\psi_r(0)&=0.\label{eq:trivial_initial_derivative}
\end{align}
\end{subequations}
Here, $\lambda\in\R$ is a parameter, which will later serve as the bifurcation parameter, and \eqref{eq:trivial_initial_derivative} is imposed due to \eqref{eq:regularity_condition_psi}. Notice that, in view of \eqref{eq:relation_velocity_psi}, there is a one-to-one correspondence of the parameter $\lambda$ and the velocity at the symmetry axis via $\vec{u}=-2\lambda\vec{e}_z$ at $r=0$.

In order to solve \eqref{eq:trivial}, we rewrite \eqref{eq:trivial}, making use of \eqref{eq:trivial_ode}, \eqref{eq:trivial_initial_value}, and $\partial_{rr}+\frac{3}{r}\partial_r=r^{-3}\partial_r(r^3\partial_r)$, as the integral equation
\begin{align}\label{eq:trivial_integral_equation}
	\psi(r)=\lambda-\int_0^rt^{-3}\int_0^t\left(s^3\gamma(s^2\psi(s))+s(FF')(s^2\psi(s))\right)\,ds\,dt.
\end{align}
By Lipschitz continuity of $\gamma$ and $FF'$, it is straightforward to see that the right-hand side of \eqref{eq:trivial_integral_equation} gives rise to a contraction on $C([0,\varepsilon])$ if $\varepsilon>0$ is small enough. Thus, \eqref{eq:trivial_integral_equation} has a unique continuous solution on such $[0,\varepsilon]$ by Banach's fixed point theorem. It is clear that, by virtue of \eqref{eq:relFG} and \eqref{eq:trivial_integral_equation}, this solution is of class $C^2$ on $[0,\varepsilon]$, and satisfies \eqref{eq:trivial_ode} on $(0,\varepsilon]$ and \eqref{eq:trivial_initial_value}, \eqref{eq:trivial_initial_derivative}. Now, once having left the singular point $r=0$, it is obvious that $\psi$ can be uniquely extended to a $C^2$-solution of \eqref{eq:trivial_ode} on $(0,d]$, since $\gamma$ and $FF'$ are Lipschitz continuous. Moreover, $\psi\in C^{2,1}([0,d])$ in view of \eqref{eq:trivial_ode}, \eqref{eq:trivial_initial_derivative}. 

Finally, motivated by the flattening considered below, we define
\begin{align}\label{eq:psi^lambda}
	\psi^\lambda(s)\coloneqq\psi(sd),\quad s\in[0,1]
\end{align}
where $\psi$ is the unique solution of \eqref{eq:trivial} as obtained above.

\subsection{Working in 5D and flattening}
In the following, for a function $\psi=\psi(r,z)$ on some $\Omega\subset\R^2$ we denote by $\I\psi$ the function given by
\[\I\psi(x,z)=\psi(|x|,z)\]
and defined on the set $\Omega^\I$, which results from rotating $\Omega$ around the $z$-axis in $\R^5=\{(x,z)\in\R^4\times\R\}$. Conversely, any axially symmetric set in $\R^5$ can be written as $\Omega^\I$ for a suitable $\Omega\subset\R^2$, and any axially symmetric function $\tilde\psi$ on $\Omega^\I$ equals $\I\psi$ for a certain function $\psi$ on $\Omega$, i.e, $\psi=\I^{-1}\tilde\psi$, where $\I^{-1}$ is defined on the set of axially symmetric functions. Thus, it is easy to see that $\psi$ satisfies \eqref{eq:regularity_condition_psi} and solves \eqref{eq:OriginalEquation_psi_PDE}, \eqref{eq:OriginalEquation_psi_top} if and only if $\I\psi\in C^2(\overline{\Omega^\I})$ and solves
\begin{subequations}\label{eq:PDE_in_5D}
\begin{align}
	\Delta_5\I\psi&=-\gamma(|x|^2\I\psi)-\frac{1}{|x|^2}F(|x|^2\I\psi)F'(|x|^2\I\psi)&\text{in }\Omega^\I,\\
	\I\psi&=\frac{m}{|x|^2}&\text{on }\partial\Omega_\mathcal{S}^\I,
\end{align}	
\end{subequations}
with $\Delta_5$ denoting the Laplacian in five dimensions. Therefore, no longer a term which is singular on the symmetry axis appears (cf. \eqref{eq:relFG}) -- this is the main motivation for working with $\psi$ instead of $\Psi$. In order to transform \eqref{eq:PDE_in_5D} into a fixed domain, we consider the flattening $(x,z)\mapsto(y,z)=(x/(d+\eta(z)),z)$ -- from now on, we shall always assume that $\eta>-d$. Thus, introducing $\tilde\psi$ via $\tilde\psi(y,z)=\I\psi(x,z)$, \eqref{eq:PDE_in_5D} is transformed into
\begin{subequations}\label{eq:PDE+Dirichlet_flattened_tildepsi}
\begin{align}
	L^\eta\tilde\psi&=-\gamma((d+\eta)^2|y|^2\tilde\psi)-\frac{1}{(d+\eta)^2|y|^2}(FF')((d+\eta)^2|y|^2\tilde\psi)&\text{in }\Omega_0^\I,\\
	\tilde\psi&=\frac{m}{(d+\eta)^2}&\text{on }|y|=1,
\end{align}
\end{subequations}
where $\Omega_0\coloneqq [0,1)\times\R$ and
\begin{align*}
	L^\eta\tilde\psi\coloneqq\tilde\psi_{zz}+\frac{1}{(d+\eta)^2}\left(\tilde\psi_{y_iy_i}-2(d+\eta)\eta_zy_i\tilde\psi_{y_iz}+\eta_z^2y_iy_j\tilde\psi_{y_iy_j}-((d+\eta)\eta_{zz}-2\eta_z^2)y_i\tilde\psi_{y_i}\right);
\end{align*}
here and throughout this paper, repeated indices are summed over. It is straightforward to see that $L^\eta$ is a uniformly elliptic operator, provided $d+\eta$ is uniformly bounded from below by a positive constant.

As for Bernoulli's equation \eqref{eq:OriginalEquation_psi_Bernoulli}, we do not have to take a detour and increase the dimension, since in \eqref{eq:OriginalEquation_psi_Bernoulli} no singular term appears, at least whenever the surface does not intersect the symmetry axis. Therefore, here we consider the flattening \[H[\eta]^{-1}\colon\Omega\to\Omega_0,\quad(r,z)\mapsto(s,z)=(r/(d+\eta(z)),z);\]
we shall call $H[\eta]$ the inverse map. Then, with $\bar\psi(s,z)=\psi(r,z)$, that is, 
\begin{align}\label{eq:barpsi_tildepsi}
	\I\bar\psi=\tilde\psi,
\end{align}
\eqref{eq:OriginalEquation_psi_Bernoulli} is transformed into
\begin{align}\label{eq:Bernoulli_flattened_barpsi}
	\frac{\bar\psi_s^2+((d+\eta)\bar\psi_z-\eta_z\bar\psi_s)^2}{2}+\frac{F((d+\eta)^2\bar\psi)^2}{2(d+\eta)^2}+\frac{2m\bar\psi_s}{(d+\eta)^2}+\frac{2m^2}{(d+\eta)^4}-\sigma\kappa[\eta]=Q\quad\text{on }s=1.
\end{align}

\subsection{Reformulation}
For later reasons, it is convenient to work with functions $\phi$ satisfying $\phi=0$ on $s=1$ instead of functions $\bar\psi$ with variable boundary condition at $s=1$. Thus, we introduce, for any $\lambda\in\R$, the function \[\phi=\bar\psi-\frac{d^2}{(d+\eta)^2}\psi^\lambda.\]
In terms of $\phi$, \eqref{eq:PDE+Dirichlet_flattened_tildepsi} and \eqref{eq:Bernoulli_flattened_barpsi}, furnished with \eqref{eq:barpsi_tildepsi} and $m=m(\lambda)\coloneqq d^2\psi^\lambda(1)$, read
\begin{subequations}\label{eq:PDE+Dirichlet_flattened}
	\begin{align}
	L^\eta\I\phi&=-\gamma\left((d+\eta)^2|y|^2\left(\I\phi+\frac{d^2}{(d+\eta)^2}\I\psi^\lambda\right)\right)\nonumber\\
	&\phantom{=\;}-\frac{1}{(d+\eta)^2|y|^2}(FF')\left((d+\eta)^2|y|^2\left(\I\phi+\frac{d^2}{(d+\eta)^2}\I\psi^\lambda\right)\right)-L^\eta\frac{d^2\I\psi^\lambda}{(d+\eta)^2}&\text{in }\Omega_0^\I,\\
	\I\phi&=0&\text{on }|y|=1,
	\end{align}
\end{subequations}
and
\begin{align}\label{eq:Bernoulli_flattened}
&\frac{\left(\phi_s+\frac{d^2}{(d+\eta)^2}\psi^\lambda_s\right)^2+\left((d+\eta)\phi_z-\eta_z\left(\phi_s+\frac{2m(\lambda)+d^2\psi^\lambda_s}{(d+\eta)^2}\right)\right)^2}{2}\nonumber\\
&+\frac{F\left((d+\eta)^2\left(\phi+\frac{m(\lambda)}{(d+\eta)^2}\right)\right)^2}{2(d+\eta)^2}+\frac{2m(\lambda)\left(\phi_s+\frac{d^2}{(d+\eta)^2}\psi^\lambda_s\right)}{(d+\eta)^2}+\frac{2m(\lambda)^2}{(d+\eta)^4}-\sigma\kappa[\eta]=Q\quad\text{ on }s=1.
\end{align}
Henceforth, we search for solutions $(\lambda,\eta,\phi)$ of \eqref{eq:PDE+Dirichlet_flattened}, \eqref{eq:Bernoulli_flattened}.

Our goal is to rewrite \eqref{eq:PDE+Dirichlet_flattened}, \eqref{eq:Bernoulli_flattened} in the form \enquote{identity plus compact}, namely, as $(\eta,\phi)=\M(\lambda,\eta,\phi)$ with $\M$ compact. Meanwhile, we shall also clarify what $Q$ exactly is, namely, we define it as an expression in $(\lambda,\eta,\phi)$. To this end, we first fix $0<\alpha<1$ and introduce the Banach space
\[X\coloneqq\left\{(\eta,\phi)\in C_{0,\per,\e}^{2,\alpha}(\R)\times C_{\per,\e}^{0,\alpha}(\overline{\Omega_0}):\phi=0\text{ on }s=1,\ \I\phi\in H^1_{\per,\e}(\Omega_0^\I)\right\},\]
equipped with the canonical norm
\[\|(\eta,\phi)\|_X=\|\eta\|_{C^{2,\alpha}_\per(\R)}+\|\phi\|_{C^{0,\alpha}_\per(\Omega_0)}+\|\I\phi\|_{H^1_\per(\Omega_0^\I)}.\]
Here, the indices \enquote{$\per$}, \enquote{$\e$}, and \enquote{$0$} denote $L$-periodicity ($\nu\coloneqq 2\pi/L$ in the following), evenness (in $z$ with respect to $z=0$), and zero average over one period.

First, for
\[(\lambda,\eta,\phi)\in\R\times\U\coloneqq\{(\lambda,\eta,\phi)\in\R\times X:d+\eta>0\text{ on }[0,L]\},\]
we let $\A(\lambda,\eta,\phi)=\I^{-1}\varphi$, where $\varphi\in C^{2,\alpha}(\overline{\Omega_0^\I})$ is the unique solution of
\begin{subequations}\label{eq:A_system}
	\begin{align}
	L^\eta\varphi&=-L^\eta\frac{d^2\I\psi^\lambda}{(d+\eta)^2}-\gamma\left((d+\eta)^2|y|^2\left(\I\phi+\frac{d^2}{(d+\eta)^2}\I\psi^\lambda\right)\right)\nonumber\\
	&\phantom{=\;}-\frac{1}{(d+\eta)^2|y|^2}(FF')\left((d+\eta)^2|y|^2\left(\I\phi+\frac{d^2}{(d+\eta)^2}\I\psi^\lambda\right)\right)&\text{in }\Omega_0^\I,\label{eq:A_PDE}\\
	\varphi&=0&\text{on }|y|=1;
	\end{align}
\end{subequations}
here, notice that the right-hand side of \eqref{eq:A_PDE} is an element of $C^{0,\alpha}(\overline{\Omega_0^\I})$ (cf. \eqref{eq:relFG} and the discussion there) and that \eqref{eq:A_system} is invariant under rotations about the $z$-axis, so that $\varphi$ has to be axially symmetric.

Second, we rewrite \eqref{eq:Bernoulli_flattened} as an equation for $\eta_{zz}$, using $\A=\A(\lambda,\eta,\phi)$ instead of $\phi$ -- notice that this change does not affect the equivalence of the whole reformulation to the original equations since clearly \eqref{eq:PDE+Dirichlet_flattened} is equivalent to $\phi=\A(\lambda,\eta,\phi)$:
\begin{align*}
	\eta_{zz}&=\sigma^{-1}(1+\eta_z^2)^{3/2}\Bigg(\frac{\sigma}{(d+\eta)\sqrt{1+\eta_z^2}}+\frac{\left(\A_s+\frac{d^2}{(d+\eta)^2}\psi^\lambda_s\right)^2+\left((d+\eta)\A_z-\eta_z\left(\A_s+\frac{2m(\lambda)+d^2\psi^\lambda_s}{(d+\eta)^2}\right)\right)^2}{2}\\
	&\omit\hfill$\displaystyle+\frac{F\left((d+\eta)^2\left(\A+\frac{m(\lambda)}{(d+\eta)^2}\right)\right)^2}{2(d+\eta)^2}+\frac{2m(\lambda)\left(\A_s+\frac{d^2}{(d+\eta)^2}\psi^\lambda_s\right)}{(d+\eta)^2}+\frac{2m(\lambda)^2}{(d+\eta)^4}-Q\Bigg)$
\end{align*}
on $s=1$. In order to apply $\partial_z^{-2}\colon C_{0,\per}^{0,\alpha}(\R)\to C_{0,\per}^{2,\alpha}(\R)$, the inverse operation to twice differentiation, to this relation, the right-hand side needs to have zero average over one period. Therefore, we view $Q$ as a function of $(\lambda,\eta,\phi)$ via
\begin{align*}
	Q(\lambda,\eta,\phi)&\coloneqq\frac{1}{\langle(1+\eta_z^2)^{3/2}\rangle}\Bigg\langle(1+\eta_z^2)^{3/2}\Bigg(\frac{\sigma}{(d+\eta)\sqrt{1+\eta_z^2}}\\
	&\omit\hfill$\displaystyle+\frac{\left(\SL\A_s+\frac{d^2}{(d+\eta)^2}\SL\psi^\lambda_s\right)^2+\left((d+\eta)\SL\A_z-\eta_z\left(\SL\A_s+\frac{2m(\lambda)+d^2\SL\psi^\lambda_s}{(d+\eta)^2}\right)\right)^2}{2}$\\
	&\omit\hfill$\phantom{\coloneqq\;}\displaystyle+\frac{F\left((d+\eta)^2\left(\SL\A+\frac{m(\lambda)}{(d+\eta)^2}\right)\right)^2}{2(d+\eta)^2}+\frac{2m(\lambda)\left(\SL\A_s+\frac{d^2}{(d+\eta)^2}\SL\psi^\lambda_s\right)}{(d+\eta)^2}+\frac{2m(\lambda)^2}{(d+\eta)^4}\Bigg)\Bigg\rangle.$
\end{align*}
Here and in the following, $\langle f\rangle$ denotes the average of a $L$-periodic function $f$ over one period, and $\SL f$ denotes the evaluation of a function $f$ at $s=1$.

Putting everything together, we reformulate \eqref{eq:PDE+Dirichlet_flattened}, \eqref{eq:Bernoulli_flattened} as 
\begin{align}\label{eq:F=0}
\F(\lambda,\eta,\phi)=0
\end{align}
for $(\lambda,\eta,\phi)\in\R\times\U$, where
\[\F\colon\R\times\U\to X,\;\F(\lambda,\eta,\phi)=(\eta,\phi)-\M(\lambda,\eta,\phi),\]
with $\M=(\M^1,\M^2)$,
\begin{align*}
	&\M^1(\lambda,\eta,\phi)\coloneqq\\
	&\partial_z^{-2}\Bigg(\sigma^{-1}(1+\eta_z^2)^{3/2}\Bigg(\frac{\sigma}{(d+\eta)\sqrt{1+\eta_z^2}}\\
	&\omit\hfill$\displaystyle+\frac{\left(\SL\A_s+\frac{d^2}{(d+\eta)^2}\SL\psi^\lambda_s\right)^2+\left((d+\eta)\SL\A_z-\eta_z\left(\SL\A_s+\frac{2m(\lambda)+d^2\SL\psi^\lambda_s}{(d+\eta)^2}\right)\right)^2}{2}$\\
	&\phantom{\partial_z^{-2}\Bigg(}+\frac{F\left((d+\eta)^2\left(\SL\A+\frac{m(\lambda)}{(d+\eta)^2}\right)\right)^2}{2(d+\eta)^2}+\frac{2m(\lambda)\left(\SL\A_s+\frac{d^2}{(d+\eta)^2}\SL\psi^\lambda_s\right)}{(d+\eta)^2}+\frac{2m(\lambda)^2}{(d+\eta)^4}-Q(\lambda,\eta,\phi)\Bigg)\Bigg),\\
	&\M^2(\lambda,\eta,\phi)\coloneqq\A(\lambda,\eta,\phi).
\end{align*}

Notice that $\F$ is well-defined; in particular, both $\M^1(\lambda,\eta,\phi)$ and $\M^2(\lambda,\eta,\phi)$ are periodic and even in $z$. We summarise our reformulation in the following lemma.
\begin{lemma}
	A tuple $(\lambda,\eta,\phi)\in\R\times X$ satisfying $\eta>-d$ solves \eqref{eq:F=0} if and only if
	\begin{enumerate}[label=(\roman*)]
		\item $\eta$ and $\phi$ are of class $C^{2,\alpha}$; and
		\item the tuple
		\[(\eta,\psi)=\left(\eta,\left(\phi+\frac{d^2}{(d+\eta)^2}\psi^\lambda\right)\circ H[\eta]^{-1}\right)\]
		solves \eqref{eq:OriginalEquation_psi} with $\Omega=H[\eta](\Omega_0)$, $Q=Q(\lambda,\eta,\phi)$, and $m=m(\lambda)$; and
		\item $\psi\in C^{2,\alpha}_{\per,\e}(\overline\Omega)$ and satisfies \eqref{eq:regularity_condition_psi}.
	\end{enumerate}
\end{lemma}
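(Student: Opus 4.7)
The plan is to verify that \eqref{eq:F=0} is, by construction, a convenient repackaging of the chain of reformulations culminating in \eqref{eq:PDE+Dirichlet_flattened}, \eqref{eq:Bernoulli_flattened}. The argument is essentially bookkeeping, modulo two delicate points: the compatibility of $\partial_z^{-2}$ with the definition of $Q$, and the regularity at the symmetry axis.

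For the forward direction, I assume $\F(\lambda,\eta,\phi)=0$ and first read off $\phi=\A(\lambda,\eta,\phi)$. By definition of $\A$ the function $\I\phi$ belongs to $C^{2,\alpha}(\overline{\Omega_0^\I})$ and solves \eqref{eq:A_system}; this both upgrades the regularity of $\phi$ from $C^{0,\alpha}$ to $C^{2,\alpha}$ and encodes the PDE together with the Dirichlet condition \eqref{eq:PDE+Dirichlet_flattened}. Simultaneously, $\eta=\M^1(\lambda,\eta,\phi)$ yields $\eta\in C^{2,\alpha}_{0,\per,\e}(\R)$ via the isomorphism $\partial_z^{-2}\colon C^{0,\alpha}_{0,\per}\to C^{2,\alpha}_{0,\per}$, giving (i); applying $\partial_z^2$ to this identity returns precisely Bernoulli's equation in the $\eta_{zz}$-resolved form, and by the very definition of $Q(\lambda,\eta,\phi)$ the expression inside $\partial_z^{-2}$ has zero average over one period, so the two inverse operations are consistent. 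Undoing the substitution $\phi=\bar\psi-d^2(d+\eta)^{-2}\psi^\lambda$ together with the flattenings $\bar\psi(s,z)=\psi(r,z)$ and $\tilde\psi=\I\bar\psi$ then recovers \eqref{eq:OriginalEquation_psi} for $\psi=\bar\psi\circ H[\eta]^{-1}$ on $\Omega=H[\eta](\Omega_0)$ with $m=m(\lambda)$. The center-line condition \eqref{eq:Psi_r=0} is automatic from $\Psi=r^2\psi$ and continuity of $\psi$, while \eqref{eq:regularity_condition_psi}, and in particular $\psi_r|_{r=0}=0$, follows from $\I\bar\psi\in C^{2,\alpha}$ on an axially symmetric domain in $\R^5$---this is exactly the point of Ni's trick.

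For the converse, given $(\eta,\psi)$ as in (i)--(iii), I would set $\bar\psi=\psi\circ H[\eta]$ and $\phi=\bar\psi-d^2(d+\eta)^{-2}\psi^\lambda$, and run the previous transformations in reverse order. The axial symmetry of $\I\bar\psi$ combined with uniqueness for the linear elliptic problem \eqref{eq:A_system} (guaranteed for $\eta>-d$, since $L^\eta$ is then uniformly elliptic with smooth coefficients on a bounded axisymmetric domain) gives $\phi=\A(\lambda,\eta,\phi)$, while integrating the $\eta_{zz}$-resolved Bernoulli equation twice---using the zero-average property built into $X$---produces $\eta=\M^1(\lambda,\eta,\phi)$, with the genuine Bernoulli constant automatically agreeing with $Q(\lambda,\eta,\phi)$ by the averaging identity defining the latter. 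The main effort amounts to these two compatibility checks; everything else is a routine unwinding of the substitutions set up above.
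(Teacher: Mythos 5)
Your proposal is correct and follows essentially the same route as the paper: the equivalence of the equations is by construction of the reformulation, and the only real content is the regularity upgrade, which you obtain exactly as the paper does — $\phi=\A(\lambda,\eta,\phi)$ forces $\I\phi\in C^{2,\alpha}(\overline{\Omega_0^\I})$, the zero-average normalisation built into $Q(\lambda,\eta,\phi)$ makes $\partial_z^{-2}$ and the $\eta_{zz}$-resolved Bernoulli equation consistent, and \eqref{eq:regularity_condition_psi} at the axis follows from the five-dimensional regularity of $\I\psi$ (Ni's trick). The paper merely compresses the bookkeeping you spell out.
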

\begin{proof}
	We only need to take care of the regularity properties and \eqref{eq:regularity_condition_psi}. However, to this end it is sufficient to notice that $\eta\in C^{2,\alpha}(\R)$ and $\I\phi\in C^{2,\alpha}(\overline{\Omega_0^\I})$ provided $\F(\lambda,\eta,\phi)=0$. Indeed, we have $\I\psi\in C^{2,\alpha}(\overline{\Omega^\I})$ in this case since $\I\psi^\lambda\in C^{2,1}(\overline{\Omega_0^\I})$; in particular, $\psi$ satisfies \eqref{eq:regularity_condition_psi}.
\end{proof}
By construction, all points of the form $(\lambda,0,0)$ are solutions of \eqref{eq:F=0} -- they make up the curve of trivial solutions. An inspection of $\M$ shows the following; in particular, \eqref{eq:F=0} has the form \enquote{identity plus compact}.
\begin{lemma}\label{lma:M_prop}
	$\M$ and thus $\F$ is of class $C^2$ on $\R\times\U$. Moreover, $\M$ is compact on
	\[\R\times\U_\varepsilon\coloneqq\{(\lambda,\eta,\phi)\in\R\times X:d+\eta\ge\varepsilon\text{ on }\R\}\]
	for each $\varepsilon>0$.
\end{lemma}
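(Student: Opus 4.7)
The plan is to treat the two components $\M^2=\A$ and $\M^1$ separately; since $\M^1$ is built from $\A$, I would first prove the $C^2$-regularity and compactness of $\A$ and then deduce the corresponding statements for $\M^1$. The $C^2$ statement is local, so it suffices to work in a neighbourhood of any fixed $(\lambda_0,\eta_0,\phi_0)\in\R\times\U$ where $d+\eta$ is pointwise positive, which by continuity of $\eta_0$ on $[0,L]$ forces $d+\eta\geq\varepsilon$ on $[0,L]$ throughout some small neighbourhood; compactness on the other hand requires the uniform bound provided by $\R\times\U_\varepsilon$.

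For the $C^2$-smoothness of $\A$, I would combine two ingredients. First, $L^\eta$ is uniformly elliptic with coefficients that are polynomial in $(d+\eta)^{-1}$, $\eta_z$ and $\eta_{zz}$, so as a map from $\eta$ into bounded operators from the subspace of $C^{2,\alpha}_{\per,\e}(\overline{\Omega_0^\I})$ with zero Dirichlet data on $|y|=1$ into $C^{0,\alpha}_{\per,\e}(\overline{\Omega_0^\I})$ it is $C^\infty$; Schauder theory on the periodic strip gives that each such $L^\eta$ is a topological isomorphism, so $\eta\mapsto(L^\eta)^{-1}$ is $C^\infty$ as well. Second, the right-hand side of \eqref{eq:A_PDE} depends $C^2$-smoothly on $(\lambda,\eta,\phi)$ with values in $C^{0,\alpha}_{\per,\e}(\overline{\Omega_0^\I})$: this uses that $\gamma\in C^{2,1}_{\text{loc}}$ and $F\in C^{3,1}_{\text{loc}}$ give $C^2$-smooth Nemytskii operators on $C^{0,\alpha}$, that the rewriting \eqref{eq:relFG} via the extended function $G(x)=F(x)/x$ removes the apparent singularity on $|y|=0$, and that $\lambda\mapsto\psi^\lambda$ is $C^2$ into $C^{2,1}([0,1])$. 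The latter I would obtain by applying the implicit function theorem to the integral equation \eqref{eq:trivial_integral_equation} on a small interval $[0,\delta]$ (where the right-hand side is a $C^2$-smooth contraction in $\lambda$ by virtue of the regularity of $\gamma$ and $FF'$), followed by standard smooth parameter dependence for the regular ODE \eqref{eq:trivial_ode} on $[\delta,d]$. Composing, $\A=(L^\eta)^{-1}\circ(\text{right-hand side})$ is of class $C^2$. The $C^2$-regularity of $\M^1$ then follows since it is built from $\A$, $\eta$, $\psi^\lambda$ and the data $F$, $F'$ by pointwise algebraic operations (including division by the bounded-below denominators $d+\eta$ and $\langle(1+\eta_z^2)^{3/2}\rangle$), the continuous trace $\SL$ from $C^{2,\alpha}(\overline{\Omega_0^\I})$ to $C^{2,\alpha}(\R)$, and the bounded linear operators $\langle\cdot\rangle$ and $\partial_z^{-2}\colon C^{0,\alpha}_{0,\per}(\R)\to C^{2,\alpha}_{0,\per}(\R)$.

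For compactness on $\R\times\U_\varepsilon$, the preceding construction yields a uniform $C^{2,\alpha}_{\per,\e}(\overline{\Omega_0^\I})$-bound on $\varphi=\I\A$ via Schauder estimates: on $\U_\varepsilon$ the coefficients of $L^\eta$ are uniformly bounded in $C^{0,\alpha}$ with uniform ellipticity constant, and the right-hand side of \eqref{eq:A_PDE} is uniformly bounded in $C^{0,\alpha}$. Restricting to one period gives a bounded subset of $C^{2,\alpha}$ on a compact subset of $\R^5$, which embeds compactly into both $C^{0,\alpha}$ and $H^1$; hence $\M^2$ is compact into the $\phi$-factor of $X$. For $\M^1$, the same $C^{2,\alpha}$-bound on $\A$ and $\eta\in C^{2,\alpha}$ imply that the whole expression to which $\partial_z^{-2}$ is applied stays bounded in $C^{1,\alpha}_{0,\per,\e}(\R)$; since $\partial_z^{-2}\colon C^{1,\alpha}\to C^{3,\alpha}$ is bounded and the periodic embedding $C^{3,\alpha}_\per\hookrightarrow C^{2,\alpha}_\per$ is compact (Arzelà--Ascoli on one period, plus interpolation to recover the Hölder seminorm), $\M^1$ is compact too.

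The main obstacle I anticipate is in the second half of the smoothness argument — carefully tracking regularity through the many compositions, verifying $C^2$-smoothness of the Nemytskii operators induced by $\gamma$, $F$, $F'$, $G$ on the relevant Hölder spaces, applying \eqref{eq:relFG} to genuinely neutralise the apparent singularity at $|y|=0$, and independently establishing $C^2$-dependence of $\lambda\mapsto\psi^\lambda$ across the singular point $r=0$ of \eqref{eq:trivial_ode}. Once these functional-analytic bookkeeping items are settled, the remainder is standard elliptic regularity, the inverse function theorem, and Arzelà--Ascoli.
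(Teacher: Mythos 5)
Your proposal is correct and follows essentially the same route as the paper: the paper likewise reduces the $C^2$-property to that of $\A$ (citing \eqref{eq:assumptions_gamma_F}) and obtains compactness from a uniform Schauder bound $\|\I\A\|_{C^{2,\alpha}_\per}\le C$ on $\R\times\U_\varepsilon$ followed by the compact embeddings $C^{2,\alpha}_\per\hookrightarrow C^{0,\alpha}_\per, H^1_\per$ for $\M^2$ and $C^{3,\alpha}\hookrightarrow C^{2,\alpha}$ for $\M^1$. You merely supply more detail on the smoothness bookkeeping (Nemytskii operators, smooth dependence of $(L^\eta)^{-1}$ on $\eta$, and $C^2$-dependence of $\lambda\mapsto\psi^\lambda$ via the fixed-point argument) than the paper, which dispatches this step in one sentence.
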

\begin{proof}
	The other operations in the definition of $\M$ being smooth, the property that $\M$ is of class $C^2$ follows from the property that $\A$ is of class $C^2$; this, in turn, is guaranteed by the assumption \eqref{eq:assumptions_gamma_F}. Now let $(\lambda,\eta,\phi)\in\R\times\U_\varepsilon$ be arbitrary. In the following, the quantities $C$ can change from line to line, but are always shorthand for a certain expression in its arguments which remains bounded for bounded arguments. Moreover, let $R>0$ and suppose $\|(\lambda,\eta,\phi)\|_{\R\times X}\le R$. Since $\psi^\lambda$ is of class $C^1$ with respect to $\lambda$ and $L^\eta$ is elliptic uniformly in $\eta$ due to $\eta+d\ge\varepsilon$, we see that
	\begin{align*}
		\|\I\A(\lambda,\eta,\phi)\|_{C^{2,\alpha}_\per(\overline{\Omega_0^\I})}\le C\left(R,\varepsilon^{-1},\|\gamma'\|_{L^\infty([-C(R),C(R)])},\|GF'\|_{C^{0,1}([-C(R),C(R)])}\right)
	\end{align*}
	by applying a standard Schauder estimate. This shows that $\M^2$ is compact on $\R\times\U_\varepsilon$ because of the compact embedding of $C^{2,\alpha}_\per(\overline{\Omega_0^\I})$ in $H^1_\per(\Omega_0^\I)$ and in $C^{0,\alpha}_\per(\overline{\Omega_0^\I})$ combined with \[\|f\|_{C^{0,\alpha}_\per(\overline{\Omega_0})}\le\|\I f\|_{C^{0,\alpha}_\per(\overline{\Omega_0^\I})},\quad f\in C^{0,\alpha}_\per(\overline{\Omega_0}).\]
	As for $\M^1$, we immediately find, in view of the obtained estimates for $\A$,
	\begin{align*}
		&\|\M^1(\lambda,w,\phi)\|_{C^{3,\alpha}([0,L])}\\
		&\le C\left(R,\varepsilon^{-1},\|\gamma'\|_{L^\infty([-C(R),C(R)])},\|GF'\|_{C^{0,1}([-C(R),C(R)])},\|FF'\|_{L^\infty([-C(R),C(R)])}\right).
	\end{align*}
	Hence, also $\M_1$ is compact on $\R\times\U_\varepsilon$ since $C^{3,\alpha}([0,L])$ is compactly embedded in $C^{2,\alpha}([0,L])$.
\end{proof}

\section{Local bifurcation}\label{sec:localbif}
\subsection{Computing derivatives}
We now want to calculate the partial derivative $\F_{(\eta,\phi)}$ and, in particular, its evaluation at a trivial solution. For simplicity, we shall always write $\A_\eta$ for $\A_\eta(\lambda,\eta,\phi)\delta\eta$, that is, the partial derivative of $\A$ with respect to $\eta$ evaluated at $(\lambda,\eta,\phi)$ and applied to a direction $\delta\eta$. The same applies similarly to expressions like $\A_\phi$, $L_\eta^\eta$ etc.

Linearizing the operator $L^\eta$, which only depends on $\eta$ and not on $\phi$, leads to
\begin{align*}
	L^\eta_\eta\varphi&=-\frac{2\delta\eta}{(d+\eta)^3}\left(\varphi_{y_iy_i}-2(d+\eta)\eta_zy_i\varphi_{y_iz}+\eta_z^2y_iy_j\varphi_{y_iy_j}-((d+\eta)\eta_{zz}-2\eta_z^2)y_i\varphi_{y_i}\right)\\
	&\phantom{=\;}+\frac{1}{(d+\eta)^2}\Big(-2(\eta_z\delta\eta+(d+\eta)\delta\eta_z)y_i\varphi_{y_iz}+2\eta_z\delta\eta_zy_iy_j\varphi_{y_iy_j}\\
	&\omit\hfill$\displaystyle-(\eta_{zz}\delta\eta+(d+\eta)\delta\eta_{zz}-4\eta_z\delta\eta_z)y_i\varphi_{y_i}\Big)$.
\end{align*}
Since formally linearizing an equation like $L\varphi=f$ gives $L\delta\varphi+\delta L\varphi=\delta f$, we see that $\I\A_\eta$ is the unique solution of
\begin{align*}
	L^\eta\I\A_\eta&=-L^\eta_\eta\I\A-L^\eta_\eta\frac{d^2\I\psi^\lambda}{(d+\eta)^2}+2L^\eta\frac{d^2\I\psi^\lambda\delta\eta}{(d+\eta)^3}\\
	&\phantom{=\;}-2\gamma'\left((d+\eta)^2|y|^2\left(\I\phi+\frac{d^2}{(d+\eta)^2}\I\psi^\lambda\right)\right)(d+\eta)\delta\eta|y|^2\I\phi\\
	&\phantom{=\;}+\frac{2\delta\eta}{(d+\eta)^3|y|^2}(FF')\left((d+\eta)^2|y|^2\left(\I\phi+\frac{d^2}{(d+\eta)^2}\I\psi^\lambda\right)\right)\\
	&\phantom{=\;}-2(FF')'\left((d+\eta)^2|y|^2\left(\I\phi+\frac{d^2}{(d+\eta)^2}\I\psi^\lambda\right)\right)\frac{\delta\eta\I\phi}{d+\eta}&\text{in }\Omega_0^\I,\\
	\I\A_\eta&=0&\text{on }|y|=1.
\end{align*}
Similarly, $\I\A_\phi$ is the unique solution of
\begin{align*}
	L^\eta\I\A_\phi&=-\gamma'\left((d+\eta)^2|y|^2\left(\I\phi+\frac{d^2}{(d+\eta)^2}\I\psi^\lambda\right)\right)(d+\eta)^2|y|^2\I\delta\phi\\
	&\phantom{=\;}-(FF')'\left((d+\eta)^2|y|^2\left(\I\phi+\frac{d^2}{(d+\eta)^2}\I\psi^\lambda\right)\right)\I\delta\phi&\text{in }\Omega_0^\I,\\
	\I\A_\phi&=0&\text{on }|y|=1.
\end{align*}
Evaluated at a trivial solution $(\lambda,0,0)$, we can simplify as follows:
\begin{align*}
	L^\eta\varphi&=\varphi_{zz}+\frac{1}{d^2}\varphi_{y_iy_i},\\
	L^\eta_\eta\varphi&=-\frac{2\delta\eta}{d^3}\varphi_{y_iy_i}-\frac{2\delta\eta_z}{d}y_i\varphi_{y_iz}-\frac{\delta\eta_{zz}}{d}y_i\varphi_{y_i}.
\end{align*}
In the following, we denote
\[\Delta_d\coloneqq\partial_{zz}+\frac{\partial_{y_iy_i}}{d^2}.\]
Moreover, since $\A=0$ here, we have
\begin{subequations}\label{eq:Aeta_tr}
\begin{align}
	\Delta_d\I\A_\eta&=-L^\eta_\eta\I\psi^\lambda+\frac2d\Delta_d(\I\psi^\lambda\delta\eta)+\frac{2}{d^3|y|^2}(FF')(d^2|y|^2\I\psi^\lambda)\delta\eta\nonumber\\
	&=\frac{4(\I\psi^\lambda)_{y_iy_i}}{d^3}\delta\eta+\frac{2\I\psi^\lambda+y_i(\I\psi^\lambda)_{y_i}}{d}\delta\eta_{zz}+\frac{2}{d^3|y|^2}(FF')(d^2|y|^2\I\psi^\lambda)\delta\eta&\text{in }\Omega_0^\I,\\
\I\A_\eta&=0&\text{on }|y|=1,
\end{align}
\end{subequations}
and
\begin{subequations}\label{eq:Aphi_tr}
\begin{align}
	\Delta_d\I\A_\phi&=-\left(d^2|y|^2\gamma'(d^2|y|^2\I\psi^\lambda)+(FF')'(d^2|y|^2\I\psi^\lambda)\right)\I\delta\phi&\text{in }\Omega_0^\I,\\
	\I\A_\phi&=0&\text{on }|y|=1.
\end{align}
\end{subequations}

Next, we turn to $\M^1$. After a lengthy computation we get the following results for the partial derivatives of $\M^1$ evaluated at a trivial solution $(\lambda,0,0)$, noticing that $\SL\A_\eta=\SL\A_\phi=0$ at such points:
\begin{align*}
	\M^1_\eta&=-\sigma^{-1}\left(\frac{\sigma}{d^2}+\frac{2}{d}(\SL\psi^\lambda_s)^2+\frac{F(m(\lambda))^2}{d^3}+\frac{8m(\lambda)\SL\psi^\lambda_s}{d^3}+\frac{8m(\lambda)^2}{d^5}\right)\partial_z^{-2}\delta\eta\\
	&\phantom{=\;}+\sigma^{-1}\left(\SL\psi^\lambda_s+\frac{2m(\lambda)}{d^2}\right)\partial_z^{-2}\Pro\SL\A_{\eta s},\\
	\M^1_\phi&=\sigma^{-1}\left(\SL\psi^\lambda_s+\frac{2m(\lambda)}{d^2}\right)\partial_z^{-2}\Pro\SL\A_{\phi s},
\end{align*}
where $\Pro$ is the projection onto the space of functions with zero average.

It will be convenient to introduce the abbreviation
\[c(\lambda)\coloneqq\SL(2\psi^\lambda+s\psi^\lambda_s)=\SL(2\psi^\lambda+\psi^\lambda_s)=\frac{2m(\lambda)}{d^2}+\SL\psi^\lambda_s.\]
Notice that $-c(\lambda)$ is the $z$-component of the velocity at the surface of the trivial laminar flow corresponding to $\lambda$ in view of \eqref{eq:relation_velocity_psi}. With this, we can rewrite
\begin{align}
	\M^1_\eta&=-\frac{1}{\sigma d^3}\left(\sigma d+2d^2c(\lambda)^2+F(m(\lambda))^2\right)\partial_z^{-2}\delta\eta+\sigma^{-1}c(\lambda)\partial_z^{-2}\Pro\SL\A_{\eta s},\label{eq:M1eta_tr}\\
	\M^1_\phi&=\sigma^{-1}c(\lambda)\partial_z^{-2}\Pro\SL\A_{\phi s}\label{eq:M1phi_tr}.
\end{align}

\subsection{The good unknown}
Before we proceed with the investigation of local bifurcation, we first introduce an isomorphism, which facilitates the computations later and is sometimes called $\T$-isomorphism in the literature (for example, in \cite{EEW2011, Varholm20}). The discovery of the importance of such a new variable (here $\theta$) goes back to Alinhac \cite{Alinhac89}, who called it the \enquote{good unknown} in a very general context, and Lannes \cite{Lannes05}, who introduced it in the context of water wave equations.

\begin{lemma}
	Let
	\[Y\coloneqq\left\{\theta\in C_{\per,\e}^{0,\alpha}(\overline{\Omega_0}):\SL\theta\in C_{0,\per,\e}^{2,\alpha}(\R),\ \I\theta\in H^1_{\per,\e}(\Omega_0^\I)\right\}\]
	and assume that $c(\lambda)\neq 0$. Then
	\[\T(\lambda)\colon Y\to X,\quad\T(\lambda)\theta=\left(-\frac{d\SL\theta}{c(\lambda)},\theta-\frac{2\psi^\lambda+s\psi_s^\lambda}{c(\lambda)}\SL\theta\right)\]
	is an isomorphism. Its inverse is given by
	\[[\T(\lambda)]^{-1}(\delta \eta,\delta\phi)=\delta\phi-\frac{2\psi^\lambda+s\psi_s^\lambda}{d}\delta\eta.\]
\end{lemma}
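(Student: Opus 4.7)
The overall plan is to verify three things: (a) the two maps are mutually inverse (a purely algebraic check), (b) each maps its source space into its target space (all regularity, boundary, periodicity, evenness, and zero-average conditions are satisfied), and (c) each map is bounded. Linearity is obvious in both cases, and continuity will be automatic once the target conditions are verified, since both maps are built from the trace $\SL$, multiplication by fixed smooth functions on $[0,1]$, and subtraction — all bounded operations.

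First I would verify mutual inversion by direct substitution. Given $\theta \in Y$ and setting $(\delta\eta,\delta\phi) = \T(\lambda)\theta$, the combination $\delta\phi - \frac{2\psi^\lambda + s\psi^\lambda_s}{d}\delta\eta$ telescopes to $\theta$ after plugging in $\delta\eta = -d\SL\theta/c(\lambda)$. Conversely, starting from $(\delta\eta,\delta\phi)\in X$ and letting $\theta$ be defined by the stated formula, evaluating at $s=1$ and using $\delta\phi|_{s=1}=0$ gives $\SL\theta = -\frac{c(\lambda)}{d}\delta\eta$; substituting this into $\T(\lambda)\theta$ recovers $(\delta\eta,\delta\phi)$.

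Next I would check $\T(\lambda) \colon Y \to X$. The first component $-d\SL\theta/c(\lambda)$ inherits $L$-periodicity, evenness in $z$, zero average, and $C^{2,\alpha}$-regularity directly from the assumption $\SL\theta \in C^{2,\alpha}_{0,\per,\e}(\R)$. For the second component I note: $C^{0,\alpha}_{\per,\e}$-regularity is clear since $\psi^\lambda \in C^{2,1}([0,1])$ and $\SL\theta \in C^{2,\alpha}_\per(\R)$; the boundary condition at $s=1$ follows at once from the very definition $c(\lambda) = 2\psi^\lambda(1) + \psi^\lambda_s(1)$. The inverse direction is symmetric: given $(\delta\eta,\delta\phi) \in X$, $\theta = \delta\phi - \frac{2\psi^\lambda+s\psi^\lambda_s}{d}\delta\eta$ is in $C^{0,\alpha}_{\per,\e}(\overline{\Omega_0})$, and $\SL\theta = -\frac{c(\lambda)}{d}\delta\eta$ lies in $C^{2,\alpha}_{0,\per,\e}(\R)$ since $\delta\eta$ does.

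The only step that requires any care is the axisymmetric $H^1$-condition, and that is where I expect the main (minor) obstacle. I need the factor $\frac{2\psi^\lambda + s\psi^\lambda_s}{d}$, which multiplies the trace $\SL\theta(z)$, to lift via $\I$ to a bounded function on $\overline{\Omega_0^\I} \subset \R^5$. For $\I\psi^\lambda$ this is immediate, while $s\mapsto s\psi^\lambda_s(s)$ is $C^2$ on $[0,1]$ and, crucially, has vanishing derivative at $s=0$ because $\psi^\lambda_s(0) = d\psi_r(0) = 0$ from \eqref{eq:regularity_condition_psi}; this is exactly what is needed for its axisymmetric lift $|y|\I\psi^\lambda_s$ to be $C^1$ on $\overline{\Omega_0^\I}$. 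Hence the product with $\SL\theta \in C^{2,\alpha}_\per(\R)$ is bounded and lies in $H^1_{\per,\e}(\Omega_0^\I)$, so subtracting (respectively adding) it to $\I\theta$ (respectively $\I\delta\phi$), which by hypothesis lies in $H^1_{\per,\e}(\Omega_0^\I)$, preserves the $H^1$-membership. Boundedness of $\T(\lambda)$ and its inverse follows by inspecting the same formulas and using the bounded trace map $\SL \colon C^{0,\alpha}_{\per,\e}(\overline{\Omega_0}) \to C^{0,\alpha}_\per(\R)$ together with the fixed $C^{2,1}$-bounds on $\psi^\lambda$.
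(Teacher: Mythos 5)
Your proposal is correct and follows the same route as the paper, whose proof is a one-line remark that both maps are well-defined and mutually inverse; you have simply filled in the details (the algebraic telescoping, the boundary condition via $\SL(2\psi^\lambda+s\psi^\lambda_s)=c(\lambda)$, and the $H^1$-membership of the lifted multiplier, which indeed hinges on $\I\psi^\lambda\in C^{2,1}(\overline{\Omega_0^\I})$, equivalently on $\psi^\lambda_s(0)=0$). Nothing further is needed.
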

\begin{proof}
	Both $\T(\lambda)$ and $[\T(\lambda)]^{-1}$ are well-defined, and a simple computation shows that they are inverse to each other.
\end{proof}
Let us now consider a trivial solution $(\lambda,0,0)$. In view of the $\T$-isomorphism, we introduce
\[\LL(\lambda)\coloneqq [\F_{(\eta,\phi)}(\lambda,0,0)]\circ [\T(\lambda)]\colon Y\to X\]
whenever $c(\lambda)\neq 0$. Now recall that
\[\F_{(\eta,\phi)}=(\delta\eta-\M_\eta^1-\M_\phi^1,\delta\phi-\A_\eta-\A_\phi).\]
For given $\eta$ we denote by $V=V[\eta]$ the unique solution of
\[\Delta_d\I V=0\text{ in }\Omega_0^\I,\quad \I V=\eta\text{ on }|y|=1.\]
We notice that
\[-\frac{2\psi^\lambda+s\psi^\lambda_s}{c(\lambda)}\SL\theta+\frac{d}{c(\lambda)}\A_\eta\SL\theta+\A_\phi\left(\frac{2\psi^\lambda+s\psi^\lambda_s}{c(\lambda)}\SL\theta\right)=-V[\SL\theta].\]
Indeed, from
\begin{align*}
	\partial_{y_iy_i}\I(2\psi^\lambda+s\psi^\lambda_s)&=\partial_{y_iy_i}(2\I\psi^\lambda+y_j(\I\psi^\lambda)_{y_j})=2(\I\psi^\lambda)_{y_iy_i}+\partial_{y_j}(\I\psi^\lambda)_{y_j}+\partial_{y_i}(y_j(\I\psi^\lambda)_{y_iy_j})\\
	&=4(\I\psi^\lambda)_{y_iy_i}+y_j(\I\psi^\lambda)_{y_iy_iy_j}\nonumber\\
	&=4(\I\psi^\lambda)_{y_iy_i}-y_i\partial_{y_i}\left(d^2\gamma(d^2|y|^2\I\psi^\lambda)+\frac{1}{|y|^2}(FF')(d^2|y|^2\I\psi^\lambda)\right)\\
	&=4(\I\psi^\lambda)_{y_iy_i}-y_i\Bigg(d^4\gamma'(d^2|y|^2\I\psi^\lambda)(2y_i\I\psi^\lambda+|y|^2(\I\psi^\lambda)_{y_i})\\
	&\phantom{=\;}-\frac{2y_i}{|y|^4}(FF')(d^2|y|^2\I\psi^\lambda)+\frac{d^2}{|y|^2}(FF')'(d^2|y|^2\I\psi^\lambda)(2y_i\I\psi^\lambda+|y|^2(\I\psi^\lambda)_{y_i})\Bigg)\\
	&=4(\I\psi^\lambda)_{y_iy_i}+\frac{2}{|y|^2}(FF')(d^2|y|^2\I\psi^\lambda)\\
	&\phantom{=\;}-d^2\left(d^2|y|^2\gamma'(d^2|y|^2\I\psi^\lambda)+(FF')'(d^2|y|^2\I\psi^\lambda)\right)\left(2\I\psi^\lambda+y_i(\I\psi^\lambda)_{y_i}\right)
\end{align*}
we infer that the function $f\coloneqq-\frac{2\psi^\lambda+s\psi^\lambda_s}{c(\lambda)}\SL\theta+\frac{d}{c(\lambda)}\A_\eta\SL\theta+\A_\phi\left(\frac{2\psi^\lambda+s\psi^\lambda_s}{c(\lambda)}\SL\theta\right)+V[\SL\theta]$ satisfies
\begin{align*}
	\Delta_d\I f&=-\frac{2\I\psi^\lambda+y_i(\I\psi^\lambda)_{y_i}}{c(\lambda)}\SL\theta_{zz}-\frac{1}{d^2c(\lambda)}\Bigg(4(\I\psi^\lambda)_{y_iy_i}+\frac{2}{|y|^2}(FF')(d^2|y|^2\I\psi^\lambda)\\
	&\omit\hfill$\displaystyle-d^2\left(d^2|y|^2\gamma'(d^2|y|^2\I\psi^\lambda)+(FF')'(d^2|y|^2\I\psi^\lambda)\right)\left(2\I\psi^\lambda+y_i(\I\psi^\lambda)_{y_i}\right)\Bigg)\SL\theta$\\
	&\phantom{=\;}+\frac{d}{c(\lambda)}\left(\frac{4(\I\psi^\lambda)_{y_iy_i}}{d^3}\SL\theta+\frac{2\I\psi^\lambda+y_i(\I\psi^\lambda)_{y_i}}{d}\SL\theta_{zz}+\frac{2}{d^3|y|^2}(FF')(d^2|y|^2\I\psi^\lambda)\SL\theta\right)\\
	&\phantom{=\;}-\left(d^2|y|^2\gamma'(d^2|y|^2\I\psi^\lambda)+(FF')'(d^2|y|^2\I\psi^\lambda)\right)\frac{2\I\psi^\lambda+y_i(\I\psi^\lambda)_{y_i}}{c(\lambda)}\SL\theta\\
	&=0
\end{align*}
and $\I f=0$ at $|y|=1$. Thus, recalling \eqref{eq:Aeta_tr}, \eqref{eq:Aphi_tr}, \eqref{eq:M1eta_tr}, and \eqref{eq:M1phi_tr}, we can rewrite
\begin{align}\label{eq:L2}
	\LL_2(\lambda)\theta=\theta-(\A_\phi\theta+V[\SL\theta])
\end{align}
and
\begin{align}
	\LL_1(\lambda)\theta&=-\frac{d}{c(\lambda)}\SL\theta-\frac{1}{\sigma d^2c(\lambda)}(\sigma d+2d^2c(\lambda)^2+F(m(\lambda))^2)\partial_z^{-2}\SL\theta\nonumber\\
	&\phantom{=\;}+\sigma^{-1}c(\lambda)\partial_z^{-2}\Pro\SL\partial_s\left(\frac{d}{c(\lambda)}\A_\eta\SL\theta-\A_\phi\theta+\A_\phi\left(\frac{2\psi^\lambda+s\psi^\lambda_s}{c(\lambda)}\SL\theta\right)\right)\nonumber\\
	&=-\frac{d}{c(\lambda)}\SL\theta-\sigma^{-1}\left(\frac{\sigma}{dc(\lambda)}+2c(\lambda)+\frac{F(m(\lambda))^2}{d^2c(\lambda)}+(d^2\gamma+FF')(m(\lambda))\right)\partial_z^{-2}\SL\theta\nonumber\\
	&\phantom{=\;}-\sigma^{-1}c(\lambda)\partial_z^{-2}\Pro\SL\partial_s(\A_\phi\theta+V[\SL\theta])\label{eq:L1}
\end{align}
because of
\begin{align*}
	\SL\partial_s(2\psi^\lambda+s\psi^\lambda_s)&=\SL(3\psi_s^\lambda+s\psi^\lambda_{ss})=\SL\left(\frac3s\psi^\lambda_s+\psi^\lambda_{ss}\right)=\SL\left(-d^2\gamma(d^2s^2\psi^\lambda)-\frac{1}{s^2}(FF')(d^2s^2\psi^\lambda)\right)\\
	&=-(d^2\gamma+FF')(m(\lambda)).
\end{align*}
Notice that, under the assumption $\theta\in C_\per^{2,\alpha}(\overline{\Omega_0})$, $\LL_2(\lambda)\theta$ is the unique solution of
\begin{subequations}\label{eq:L2theta_rewritten}
\begin{align}
	\Delta_d[\I\LL_2(\lambda)\theta]&=\Delta_d\theta+\left(d^2|y|^2\gamma'(d^2|y|^2\I\psi^\lambda)+(FF')'(d^2|y|^2\I\psi^\lambda)\right)\I\theta&\text{in }\Omega_0^\I,\\
	\I\LL_2(\lambda)\theta&=0&\text{on }|y|=1,
\end{align}
\end{subequations}
and $\LL_1(\lambda)\theta$ is (in the set of $L$-periodic functions with zero average) uniquely determined by
\begin{align}
	[\LL_1(\lambda)\theta]_{zz}&=-\frac{d}{c(\lambda)}\SL\theta_{zz}-\sigma^{-1}\left(\frac{\sigma}{dc(\lambda)}+2c(\lambda)+\frac{F(m(\lambda))^2}{d^2c(\lambda)}+(d^2\gamma+FF')(m(\lambda))\right)\SL\theta\nonumber\\
	&\phantom{=\;}-\sigma^{-1}c(\lambda)\Pro\SL\partial_s(\A_\phi\theta+V[\SL\theta])\label{eq:L1theta_rewritten_2}\\
	&=-\frac{d}{c(\lambda)}\SL\theta_{zz}-\sigma^{-1}\left(\frac{\sigma}{dc(\lambda)}+2c(\lambda)+\frac{F(m(\lambda))^2}{d^2c(\lambda)}+(d^2\gamma+FF')(m(\lambda))\right)\SL\theta\nonumber\\
	&\phantom{=\;}-\sigma^{-1}c(\lambda)\Pro\SL\partial_s(\theta-\LL_2(\lambda)\theta).\label{eq:L1theta_rewritten}
\end{align}

\subsection{Kernel}
We now turn to the investigation of the kernel of $\F_{(\eta,\phi)}(\lambda,0,0)$. Clearly, in view of the $\T$-isomorphism it suffices to study the kernel of $\LL$; here and in the following, we will suppress the dependency of $\LL$ on $\lambda$. From \eqref{eq:L2} we infer that $\theta\in C^{2,\alpha}(\overline{\Omega_0})$ provided $\LL\theta=0$. Thus, combining \eqref{eq:L2theta_rewritten} and \eqref{eq:L1theta_rewritten} yields
\begin{align*}
	\LL\theta=0\;&\Longleftrightarrow\;\theta\in C_\per^{2,\alpha}(\overline{\Omega_0})\text{, and}\\
	&\Delta_d\I\theta+\left(d^2|y|^2\gamma'(d^2|y|^2\I\psi^\lambda)+(FF')'(d^2|y|^2\I\psi^\lambda)\right)\I\theta=0\text{, and}\\
	&\frac{d\SL\theta_{zz}}{c(\lambda)}+\sigma^{-1}\left(\frac{\sigma}{dc(\lambda)}+2c(\lambda)+\frac{F(m(\lambda))^2}{d^2c(\lambda)}+(d^2\gamma+FF')(m(\lambda))\right)\SL\theta+\sigma^{-1}c(\lambda)\Pro\SL\theta_s=0.
\end{align*}
Let us now write $\theta(s,z)=\sum_{k=0}^\infty\theta_k(s)\cos(k\nu z)$ as a Fourier series. Then we easily see that
\begin{align*}
	\LL\theta=0\quad\Longleftrightarrow\quad\eqref{eq:LLtheta0=0}\quad\text{and}\quad\forall k\ge 1:\eqref{eq:LLthetak=0},
\end{align*}
where
\begin{align}\label{eq:LLtheta0=0}
	\left(\frac{1}{d^2}\partial_{y_iy_i}+d^2|y|^2\gamma'(d^2|y|^2\I\psi^\lambda)+(FF')'(d^2|y|^2\I\psi^\lambda)\right)\I\theta_0=0,
\end{align}
noticing that $\theta_0(1)=0$ is already included in the definition of $Y$, and
\begin{subequations}\label{eq:LLthetak=0}
	\begin{align}
	\left(\frac{1}{d^2}\partial_{y_iy_i}+d^2|y|^2\gamma'(d^2|y|^2\I\psi^\lambda)+(FF')'(d^2|y|^2\I\psi^\lambda)-(k\nu)^2\right)\I\theta_k&=0,\label{eq:LL2thetak=0}\\
	\left(\frac{\sigma}{dc(\lambda)}(1-(k\nu)^2d^2)+2c(\lambda)+\frac{F(m(\lambda))^2}{d^2c(\lambda)}+(d^2\gamma+FF')(m(\lambda))\right)\theta_k(1)+c(\lambda)\partial_s\theta_k(1)&=0.\label{eq:LL1thetak=0}
	\end{align}
\end{subequations}
For $\mu\in\R$, let us now introduce the function $\tilde\beta=\tilde\beta^{\mu,\lambda}$, which is defined to be the unique solution of the singular Cauchy problem
\begin{align}\label{eq:tildebeta}
	\left(\partial_s^2+\frac3s\partial_s+d^4s^2\gamma'(d^2s^2\psi^\lambda)+d^2(FF')'(d^2s^2\psi^\lambda)+\mu d^2\right)\tilde\beta=0\text{ on }(0,1],\quad\tilde\beta_s(0)=0,\quad\tilde\beta(0)=1.
\end{align}
Indeed, this problem has a unique solution $\tilde\beta\in C^{2,\alpha}([0,1])$ by the same argument as in Section \ref{sec:trivial}. 

Henceforth, we shall assume that 
\begin{align}\label{ass:SL-spectrum}
	\tilde\beta^{0,\lambda}(1)\ne 0.
\end{align}
Thus, we see that \eqref{eq:LLtheta0=0} only has the trivial solution $\theta_0=0$. Indeed, if $\theta_0$ solves \eqref{eq:LLtheta0=0}, we have $\I\theta_0\in C^{2,\alpha}(\overline{\Omega_0})$, and therefore $\partial_s\theta_0(0)=0$. Hence, $\theta_0$ is a multiple of $\tilde\beta^{0,\lambda}$. But since necessarily $\theta_0(1)=0$, $\theta_0$ has to vanish identically in view of \eqref{ass:SL-spectrum}.

Let us now turn to $k\ge 1$ and notice as above that $\partial_s\theta_k(0)=0$ provided \eqref{eq:LL2thetak=0}. Thus, $\theta_k$ is a multiple of $\tilde\beta^{-(k\nu)^2,\lambda}$ if and only if \eqref{eq:LL2thetak=0} holds. First suppose that $\tilde\beta^{-(k\nu)^2,\lambda}(1)=0$ and that \eqref{eq:LLthetak=0} is satisfied. Then necessarily $\theta_k(1)=0$. Since therefore also $\partial_s\theta_k(1)=0$ by virtue of \eqref{eq:LL1thetak=0}, we conclude $\theta_k=0$. On the other hand, suppose that $\tilde\beta^{-(k\nu)^2,\lambda}(1)\ne 0$ and define $\beta^{-(k\nu)^2,\lambda}\coloneqq\tilde\beta^{-(k\nu)^2,\lambda}/\tilde\beta^{-(k\nu)^2,\lambda}(1)$. Hence, \eqref{eq:LLthetak=0} has a nontrivial solution $\theta_k$ if and only if the dispersion relation
\[\D(-(k\nu)^2,\lambda)=0,\]
where
\begin{align}\label{eq:d(k,lambda)}
	\D(\mu,\lambda)\coloneqq\beta_s^{\mu,\lambda}(1)+\frac{\sigma}{dc(\lambda)^2}(1+\mu d^2)+2+\frac{F(m(\lambda))^2}{d^2c(\lambda)^2}+\frac{(d^2\gamma+FF')(m(\lambda))}{c(\lambda)},
\end{align}
is satisfied, and in this case $\theta_k$ is a multiple of $\beta^{-(k\nu)^2,\lambda}$. We summarise our results concerning the kernel:
\begin{lemma}\label{lma:kernel}
	Given $\lambda\in\R$ with $c(\lambda)\ne 0$ and under the assumption \eqref{ass:SL-spectrum}, a function $\theta\in Y$, admitting the Fourier decomposition $\theta(s,z)=\sum_{k=0}^\infty\theta_k(s)\cos(k\nu z)$, is in the kernel of $\LL(\lambda)$ if and only if $\theta_0=0$ and for each $k\ge 1$
	\begin{enumerate}[label=(\alph*)]
		\item $\theta_k=0$, or
		\item $\tilde\beta^{-(k\nu)^2,\lambda}(1)\ne 0$, $\theta_k$ is a multiple of $\tilde\beta^{-(k\nu)^2,\lambda}$, and the dispersion relation
		\[\D(-(k\nu)^2,\lambda)=0\]
		holds, with $\D$ given in \eqref{eq:d(k,lambda)}.
	\end{enumerate}
\end{lemma}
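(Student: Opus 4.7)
The plan is to decouple $\LL(\lambda)\theta = 0$ into one problem per Fourier mode in $z$, and then analyse each mode via the singular Cauchy problem \eqref{eq:tildebeta}. As a preliminary bootstrap, I would use \eqref{eq:L2}: if $\LL(\lambda)\theta = 0$, then $\theta = \A_\phi\theta + V[\SL\theta]$, whose right-hand side is $C^{2,\alpha}$ by standard Schauder estimates for $\A_\phi$ and for the harmonic extension $V$; hence $\theta \in C^{2,\alpha}_{\per,\e}(\overline{\Omega_0})$. This regularity legitimises reading off the interior equation displayed just before \eqref{eq:LLtheta0=0} together with the surface relation obtained from \eqref{eq:L1theta_rewritten_2} after using $\LL_2(\lambda)\theta = 0$.

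Next, I would substitute the Fourier series $\theta(s,z) = \sum_{k\ge 0}\theta_k(s)\cos(k\nu z)$. Because $\Delta_d$ diagonalises in this cosine basis, the nonlocal operators $\A_\phi$ and $V$ commute with $z$-translations and preserve axisymmetry, and $\Pro$ discards the $k=0$ component, the interior equation decouples mode-by-mode into \eqref{eq:LLtheta0=0} for $k=0$ and \eqref{eq:LL2thetak=0} for $k \ge 1$, while the surface identity contributes \eqref{eq:LL1thetak=0} only for $k\ge 1$ (its $k=0$ part is automatic, since $\theta_0(1) = \langle\SL\theta\rangle = 0$ is built into the definition of $Y$). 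Reducing the axisymmetric five-dimensional Laplacian to the radial variable produces the operator $\partial_s^2 + (3/s)\partial_s$, so \eqref{eq:LLtheta0=0} and \eqref{eq:LL2thetak=0} coincide with the ODE in \eqref{eq:tildebeta} for $\mu = 0$ and $\mu = -(k\nu)^2$ respectively. The Hölder continuity of $\I\theta_k$ at the origin forces $\partial_s\theta_k(0) = 0$, so by uniqueness each $\theta_k$ must be a scalar multiple of $\tilde\beta^{-(k\nu)^2,\lambda}$.

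It then remains to dispatch the two cases. For $k = 0$: since $\theta_0(1) = 0$ and $\tilde\beta^{0,\lambda}(1) \ne 0$ by assumption \eqref{ass:SL-spectrum}, the only admissible multiple is zero. For $k \ge 1$: if $\tilde\beta^{-(k\nu)^2,\lambda}(1) = 0$, then necessarily $\theta_k(1) = 0$, and \eqref{eq:LL1thetak=0} collapses to $c(\lambda)\partial_s\theta_k(1) = 0$; since $c(\lambda) \ne 0$, uniqueness for the (nonsingular) Cauchy problem run backwards from $s=1$ yields $\theta_k \equiv 0$, placing this subcase under alternative (a). If instead $\tilde\beta^{-(k\nu)^2,\lambda}(1) \ne 0$, one forms the normalised profile $\beta^{-(k\nu)^2,\lambda} = \tilde\beta^{-(k\nu)^2,\lambda}/\tilde\beta^{-(k\nu)^2,\lambda}(1)$; substituting $\theta_k = a_k\beta^{-(k\nu)^2,\lambda}$ into \eqref{eq:LL1thetak=0}, factoring $a_k c(\lambda)$, and dividing through, one obtains exactly the defining expression \eqref{eq:d(k,lambda)}, so a nontrivial choice of $a_k$ is possible if and only if the dispersion relation $\D(-(k\nu)^2,\lambda) = 0$ holds, giving alternative (b). The converse direction is immediate by reversing these substitutions.

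The only mildly subtle point I anticipate is the rigorous justification of the termwise Fourier manipulation, since $\A_\phi$ and $V$ are defined through elliptic problems rather than by a formally diagonal action on modes. However, their commutation with $z$-translations and preservation of axisymmetry, combined with the uniform $C^{2,\alpha}$-bound from the bootstrap, makes the decoupling routine; beyond that, the argument is a clean two-case analysis of a singular Sturm--Liouville problem.
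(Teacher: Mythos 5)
Your proposal is correct and follows essentially the same route as the paper: bootstrap regularity from \eqref{eq:L2}, pass to the local reformulation \eqref{eq:L2theta_rewritten}--\eqref{eq:L1theta_rewritten}, decouple in cosine modes, and reduce each mode to the singular Cauchy problem \eqref{eq:tildebeta}, treating the two subcases $\tilde\beta^{-(k\nu)^2,\lambda}(1)=0$ and $\ne 0$ exactly as the paper does. The only cosmetic quibble is that it is the $C^{2,\alpha}$ (in particular $C^1$) regularity of $\I\theta_k$ at the origin, not mere H\"older continuity, that forces $\partial_s\theta_k(0)=0$; and your worry about termwise manipulation of the nonlocal operators evaporates once the problem is written in the local form, which is precisely how the paper handles it.
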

\begin{remark}
	Clearly, $\D(\mu,\lambda)$ is at first only defined if $\tilde\beta^{\mu,\lambda}(1)\ne 0$. If this property fails to hold, we set $\D(\mu,\lambda)\coloneqq\infty$ in the following.
\end{remark}

\subsection{Range}
Before we proceed with the investigation of the transversality condition, we first prove that the range of $\LL$ can be written as an orthogonal complement with respect to a suitable inner product. This will be helpful later. To this end, we introduce the inner product
\[\langle(f_1,g_1),(f_2,g_2)\rangle\coloneqq\langle f_1',f_2'\rangle_{L^2([0,L])}+\langle\nabla_d \I g_1,\nabla_d \I g_2\rangle_{L^2(\tilde\Omega_0^\I)}\]
for $f_1,f_2\in H^1_{0,\per}(\R)$, $g_1,g_2\colon\tilde\Omega_0\to\R$ with $\I g_1,\I g_2\in H^1_\per(\tilde\Omega_0^\I)$, where $\tilde\Omega_0\coloneqq[0,1)\times(0,L)$ is one periodic instance of $\Omega_0$ and $\nabla_d\coloneqq(\partial_{y_1}/d,\ldots,\partial_{y_4}/d,\partial_z)^T$; in order to avoid misunderstanding, we point out that the index \enquote{0} in $H^1_{0,\per}(\R)$ means \enquote{zero average} as before and not \enquote{zero boundary values}. This inner product is positive definite on the space

\[H^1_{0,\per}(\R)\times\left\{g\colon\tilde\Omega_0\to\R:\I g\in H^1_\per(\tilde\Omega_0^\I),\langle\SL g\rangle=0\right\}.\]
Notice that
\[\langle f_1',f_2'\rangle_{L^2([0,L])}=-\langle f_1,f_2''\rangle_{L^2([0,L])}\]
if $f_2\in H^2_\per(\R)$ and that
\[\langle\nabla_d g_1,\nabla_d g_2\rangle_{L^2(\tilde\Omega_0^\I)}=-\langle g_1,\Delta_d g_2\rangle_{L^2(\tilde\Omega_0^\I)}+\frac{2\pi^2}{d^2}\langle\SL g_1,\SL\partial_s g_2\rangle_{L^2([0,L])}\]
if $\I g_2\in H^2_\per(\tilde\Omega_0)$, using that $2\pi^2$ is the surface area of the $3$-sphere.

Using \eqref{eq:L2}, \eqref{eq:L2theta_rewritten}, and \eqref{eq:L1theta_rewritten_2} we now compute for smooth $\theta,\vartheta\in Y$
\begin{align*}
&\left\langle\left(\frac{2\pi^2\sigma}{d^2c(\lambda)}\SL\theta,\theta\right),\LL\vartheta\right\rangle\\
&=\frac{2\pi^2\sigma}{d^2c(\lambda)}\Bigg\langle\SL\theta,\frac{d}{c(\lambda)}\SL\vartheta_{zz}+\sigma^{-1}\left(\frac{\sigma}{dc(\lambda)}+2c(\lambda)+\frac{F(m(\lambda))^2}{d^2c(\lambda)}+(d^2\gamma+FF')(m(\lambda))\right)\SL\vartheta\\
&\omit\hfill$\displaystyle+\sigma^{-1}c(\lambda)\Pro\SL\partial_s(\A_\phi\vartheta+V[\SL\vartheta])\Bigg\rangle_{L^2([0,L])}$\\
&\phantom{=\;}-\left\langle\I\theta,\Delta_d\I\vartheta+\left(d^2|y|^2\gamma'(d^2|y|^2\I\psi^\lambda)+(FF')'(d^2|y|^2\I\psi^\lambda)\right)\I\vartheta\right\rangle_{L^2(\tilde\Omega_0^\I)}\\
&\phantom{=\;}+\frac{2\pi^2}{d^2}\langle\SL\theta,\SL\partial_s(\vartheta-(\A_\phi\vartheta+V[\SL\vartheta]))\rangle_{L^2([0,L])}\\
&=-\frac{2\pi^2\sigma}{dc(\lambda)^2}\langle\SL\theta_z,\SL\vartheta_z\rangle_{L^2([0,2\pi])}+\langle\nabla_d\I\theta,\nabla_d\I\vartheta\rangle_{L^2(\tilde\Omega_0^\I)}\\
&\phantom{=\;}+\frac{2\pi^2}{d^2c(\lambda)}\left(\frac{\sigma}{dc(\lambda)}+2c(\lambda)+\frac{F(m(\lambda))^2}{d^2c(\lambda)}+(d^2\gamma+FF')(m(\lambda))\right)\langle\SL\theta,\SL\vartheta\rangle_{L^2([0,L])}\\
&\phantom{=\;}-\left\langle\I\theta,\left(d^2|y|^2\gamma'(d^2|y|^2\I\psi^\lambda)+(FF')'(d^2|y|^2\I\psi^\lambda)\right)\I\vartheta\right\rangle_{L^2(\tilde\Omega_0^\I)}
\end{align*}
making use of $\langle\SL\theta\rangle=0$. Noticing that the terms at the beginning and at the end of this computation only involve at most first derivatives of $\theta$ and $\vartheta$, an easy approximation argument shows that this relation also holds for general $\theta,\vartheta\in Y$. Moreover, since the last expression is symmetric in $\theta$ and $\vartheta$, we can also go in the opposite direction with reversed roles and arrive at the symmetry property
\[\left\langle\left(\frac{2\pi^2\sigma}{d^2c(\lambda)}\SL\theta,\theta\right),\LL\vartheta\right\rangle=\left\langle\LL\theta,\left(\frac{2\pi^2\sigma}{d^2c(\lambda)}\SL\vartheta,\vartheta\right)\right\rangle.\]
Thus, the range of $\LL$ is the orthogonal complement of
\[\left\{\left(\frac{2\pi^2\sigma}{d^2c(\lambda)}\SL\theta,\theta\right):\theta\in\ker\LL\right\}\]
with respect to $\langle\cdot,\cdot\rangle$. Indeed, one inclusion is an immediate consequence of the symmetry property and the other inclusion follows from the facts that we already know that $\LL$, being a compact perturbation of the identity, is Fredholm with index zero and that $\LL$ gains no additional kernel when extended to functions $\theta$ of class $H^1$.

\subsection{Transversality condition}
Assuming that the kernel is spanned by the function $\theta(s,z)=\beta^{-(k\nu)^2,\lambda}(s)\cos(k\nu z)$, we have to investigate whether $\LL_\lambda\theta$ is not in the range of $\LL$, which is equivalent to
\[\left\langle\left(\frac{2\pi^2\sigma}{d^2c(\lambda)}\SL\theta,\theta\right),\LL_\lambda\theta\right\rangle\neq 0\]
by the preceding considerations. Differentiating \eqref{eq:L2} and \eqref{eq:L1} with respect to $\lambda$, for general $\theta$ it holds
\begin{align*}
	\LL_{\lambda,1}\theta&=-\partial_\lambda\left(\frac{d}{c(\lambda)}\right)\SL\theta-\sigma^{-1}\partial_\lambda\left(\frac{\sigma}{dc(\lambda)}+2c(\lambda)+\frac{F(m(\lambda))^2}{d^2c(\lambda)}+(d^2\gamma+FF')(m(\lambda))\right)\partial_z^{-2}\SL\theta\\
	&\phantom{=\;}-\sigma^{-1}c_\lambda(\lambda)\partial_z^{-2}\Pro\SL\partial_s(\A_\phi\theta+V[\SL\theta])-\sigma^{-1}c(\lambda)\partial_z^{-2}\Pro\SL\partial_s\A_{\phi\lambda}\theta\\
	\LL_{\lambda,2}\theta&=-\A_{\phi\lambda}\theta,
\end{align*}
where $\A_{\phi\lambda}\theta$ is the unique solution of
\begin{align*}
	\Delta_d(\I\A_{\phi\lambda}\theta)&=-d^2|y|^2\left(d^2|y|^2\gamma''(d^2|y|^2\I\psi^\lambda)+(FF')''(d^2|y|^2\I\psi^\lambda)\right)\partial_\lambda\I\psi^\lambda\I\theta&\text{in }\Omega_0^\I,\\
	\I\A_{\phi\lambda}\theta&=0&\text{on }|y|=1.
\end{align*}
Thus, we have
\begin{align*}
&\left\langle\left(\frac{2\pi^2\sigma}{d^2c(\lambda)}\SL\theta,\theta\right),\LL_\lambda\theta\right\rangle\\
&=\frac{2\pi^2\sigma}{d^2c(\lambda)}\Bigg\langle\SL\theta,\partial_\lambda\left(\frac{d}{c(\lambda)}\right)\SL\theta_{zz}+\sigma^{-1}\partial_\lambda\left(\frac{\sigma}{dc(\lambda)}+2c(\lambda)+\frac{F(m(\lambda))^2}{d^2c(\lambda)}+(d^2\gamma+FF')(m(\lambda))\right)\SL\theta\\
&\omit\hfill$\displaystyle+\sigma^{-1}c_\lambda(\lambda)\Pro\SL\partial_s(\A_\phi\theta+V[\SL\theta])+\sigma^{-1}c(\lambda)\Pro\SL\partial_s\A_{\phi\lambda}\theta\Bigg\rangle_{L^2([0,L])}$\\
&\phantom{=\;}-\langle\I\theta,\Delta_d(-\I\A_{\phi\lambda}\theta)\rangle_{L^2(\tilde\Omega_0^\I)}+\frac{2\pi^2}{d^2}\langle\SL\theta,\SL\partial_s(-\A_{\phi\lambda}\theta)\rangle_{L^2([0,L])}\\
&=\frac{2\pi^2}{d^2}\left\langle\SL\theta,\partial_\lambda\left(\frac{\sigma d}{c(\lambda)^2}\right)\SL\theta_{zz}+\partial_\lambda\left(\frac{\sigma}{dc(\lambda)^2}+2+\frac{F(m(\lambda))^2}{d^2c(\lambda)^2}+\frac{(d^2\gamma+FF')(m(\lambda))}{c(\lambda)}\right)\SL\theta\right\rangle_{L^2([0,L])}\\
&\phantom{=\;}-\left\langle\I\theta,d^2|y|^2\left(d^2|y|^2\gamma''(d^2|y|^2\I\psi^\lambda)+(FF')''(d^2|y|^2\I\psi^\lambda)\right)\partial_\lambda\I\psi^\lambda\I\theta\right\rangle_{L^2(\tilde\Omega_0^\I)}
\end{align*}
whenever $\LL_1\theta=0$. Now let $\theta(s,z)=\beta^{-(k\nu)^2,\lambda}(s)\cos(k\nu z)$ and notice that $f=\partial_\lambda\beta^{-(k\nu)^2,\lambda}$ solves
\begin{align*}
	(\I f)_{y_iy_i}+\left(d^4|y|^2\gamma'(d^2|y|^2\psi^\lambda)+d^2(FF')'(d^2|y|^2\psi^\lambda)-(k\nu)^2d^2\right)\I f&\\
	=-d^4|y|^2\left(d^2|y|^2\gamma''(d^2|y|^2\I\psi^\lambda)+(FF')''(d^2|y|^2\I\psi^\lambda)\right)\I\beta^{-(k\nu)^2,\lambda}\partial_\lambda\I\psi^\lambda&&\text{in }|y|<1,\\
	\I f=0&&\text{on }|y|=1.
\end{align*}
Therefore,
\begin{align*}
	&\frac{d^2}{L\pi^2}\left\langle\left(\frac{2\pi^2\sigma}{d^2c(\lambda)}\SL\theta,\theta\right),\LL_\lambda\theta\right\rangle\\
	&=\frac{1}{2\pi^2}\int_{|y|<1}\I\beta^{-(k\nu)^2,\lambda}\Big((\I\partial_\lambda\beta^{-(k\nu)^2,\lambda})_{y_iy_i}\\
	&\phantom{=\frac{1}{2\pi^2}\int_{|y|<1}\I\beta^{-(k\nu)^2,\lambda}\Big(\;}+\left(d^4|y|^2\gamma'(d^2|y|^2\psi^\lambda)+d^2(FF')'(d^2|y|^2\psi^\lambda)-(k\nu)^2d^2\right)\I\partial_\lambda\beta^{-(k\nu)^2,\lambda}\Big)\,dy\\
	&\phantom{=\;}+\partial_\lambda\left(\frac{\sigma}{dc(\lambda)^2}(1-(k\nu)^2d^2)+2+\frac{F(m(\lambda))^2}{d^2c(\lambda)^2}+\frac{(d^2\gamma+FF')(m(\lambda))}{c(\lambda)}\right)\\
	&=\partial_\lambda\beta_s^{-(k\nu)^2,\lambda}(1)+\partial_\lambda\left(\frac{\sigma}{dc(\lambda)^2}(1-(k\nu)^2d^2)+2+\frac{F(m(\lambda))^2}{d^2c(\lambda)^2}+\frac{(d^2\gamma+FF')(m(\lambda))}{c(\lambda)}\right)
\end{align*}
after integrating by parts. Thus, we have proved:
\begin{lemma}\label{lma:transversality_condition}
	Given $\lambda\in\R$ with $c(\lambda)\ne 0$ and assuming that the kernel of $\LL(\lambda)$ is one-dimensional spanned by $\theta(s,z)=\beta^{-(k\nu)^2,\lambda}(s)\cos(k\nu z)$ for some $k\ge 1$, the transversality condition
	\[\LL_\lambda(\lambda)\theta\notin\im\LL(\lambda)\]
	is equivalent to
	\[\D_\lambda(-(k\nu)^2,\lambda)\neq 0,\]
	with $\D$ given in \eqref{eq:d(k,lambda)}.
\end{lemma}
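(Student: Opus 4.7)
The plan is to exploit the range characterization obtained in the previous subsection: the range of $\LL(\lambda)$ is the $\langle\cdot,\cdot\rangle$-orthogonal complement of $\{(\tfrac{2\pi^2\sigma}{d^2c(\lambda)}\SL\theta,\theta) : \theta\in\ker\LL(\lambda)\}$. Under the standing hypothesis that $\ker\LL(\lambda)$ is one-dimensional and spanned by $\theta(s,z)=\beta^{-(k\nu)^2,\lambda}(s)\cos(k\nu z)$, the transversality condition $\LL_\lambda(\lambda)\theta\notin\im\LL(\lambda)$ is equivalent to
\[\left\langle\left(\frac{2\pi^2\sigma}{d^2c(\lambda)}\SL\theta,\theta\right),\LL_\lambda(\lambda)\theta\right\rangle\neq 0,\]
and the goal is to show that this pairing equals a nonzero constant multiple of $\D_\lambda(-(k\nu)^2,\lambda)$.

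First I would obtain explicit formulas for $\LL_{\lambda,1}(\lambda)\theta$ and $\LL_{\lambda,2}(\lambda)\theta$ by differentiating the expressions for $\LL_1$ and $\LL_2$ given in \eqref{eq:L1} and \eqref{eq:L2} with respect to $\lambda$, observing that only the coefficients and $\A_\phi$ carry $\lambda$-dependence (through $\psi^\lambda$). The derivative $\A_{\phi\lambda}\theta$ is the unique solution of the elliptic boundary value problem obtained by differentiating \eqref{eq:Aphi_tr} in $\lambda$, namely $\Delta_d(\I\A_{\phi\lambda}\theta)$ has a right-hand side proportional to $d^2|y|^2\gamma''+(FF')''$ times $\partial_\lambda\I\psi^\lambda\cdot\I\theta$, with zero boundary values on $|y|=1$.

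Next I would substitute the kernel element and compute the inner product. The $\partial_z^{-2}$ appearing in $\LL_{\lambda,1}$ is eliminated against the second $z$-derivatives produced by the $H^1$-type pairing, so everything reduces to genuinely one-dimensional expressions on $[0,L]$ plus the bilinear form on $\tilde\Omega_0^\I$. Integrating the resulting term $\langle\I\theta,\Delta_d(-\I\A_{\phi\lambda}\theta)\rangle_{L^2(\tilde\Omega_0^\I)}$ by parts and invoking the elliptic problem for $\A_{\phi\lambda}\theta$ together with the linear ODE satisfied by $\partial_\lambda\beta^{-(k\nu)^2,\lambda}$ (obtained by differentiating \eqref{eq:tildebeta} in $\lambda$ and normalising by $\tilde\beta^{-(k\nu)^2,\lambda}(1)$), one converts the volume integral into a boundary term at $s=1$ using the surface area $2\pi^2$ of the unit $3$-sphere. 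This boundary term is precisely $\partial_\lambda\beta_s^{-(k\nu)^2,\lambda}(1)$.

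Assembling everything, the remaining $\lambda$-derivatives contributed by $\LL_{\lambda,1}$ combine into
\[\partial_\lambda\!\left(\frac{\sigma}{dc(\lambda)^2}(1-(k\nu)^2d^2)+2+\frac{F(m(\lambda))^2}{d^2c(\lambda)^2}+\frac{(d^2\gamma+FF')(m(\lambda))}{c(\lambda)}\right)\!,\]
so that the total pairing equals $(L\pi^2/d^2)\,\D_\lambda(-(k\nu)^2,\lambda)$ in view of \eqref{eq:d(k,lambda)}, and the claim follows. The main obstacle is the bookkeeping of the many terms produced by differentiating $\LL_1$ in $\lambda$ and ensuring that after integration by parts they reassemble as a $\lambda$-derivative of the compact expression $\D(-(k\nu)^2,\lambda)$; in particular, one has to verify that the factor $1-(k\nu)^2d^2$ emerges naturally from the combination of $\partial_z^{-2}$ with $\SL\theta_{zz}$ acting on the single Fourier mode $\cos(k\nu z)$.
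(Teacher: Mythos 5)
Your proposal is correct and follows essentially the same route as the paper: reduce transversality to the non-vanishing of the pairing $\langle(\tfrac{2\pi^2\sigma}{d^2c(\lambda)}\SL\theta,\theta),\LL_\lambda\theta\rangle$ via the range characterization, compute $\LL_{\lambda,1}\theta$ and $\LL_{\lambda,2}\theta=-\A_{\phi\lambda}\theta$, and convert the volume term into the boundary term $\partial_\lambda\beta_s^{-(k\nu)^2,\lambda}(1)$ by integration by parts using the equation satisfied by $\partial_\lambda\beta^{-(k\nu)^2,\lambda}$. The final identification of the pairing with $(L\pi^2/d^2)\,\D_\lambda(-(k\nu)^2,\lambda)$ matches the paper's computation.
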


\subsection{Result on local bifurcation}
We summarise our result of this section using the following local bifurcation theorem by Crandall--Rabinowitz \cite[Thm. I.5.1]{Kielhoefer}.
\begin{theorem}\label{thm:CrandallRabinowitz}
	Let $X$ be a Banach space, $U\subset\R\times X$ open, and $\F\colon U\to X$ have the property $\F(\cdot,0)=0$. Assume that there exists $\lambda_0\in\R$ such that $\F$ is of class $C^2$ in an open neighbourhood of $(\lambda_0,0)$, and suppose that $\F_x(\lambda_0,0)$ is a Fredholm operator with index zero and one-dimensional kernel spanned by $x_0\in X$, and that the transversality condition $\F_{\lambda x}(\lambda_0,0)x_0\notin\im \F_x(\lambda_0,0)$ holds. Then there exists $\varepsilon>0$ and a $C^1$-curve $(-\varepsilon,\varepsilon)\ni t\mapsto(\lambda^t,x^t)$ with $(\lambda^0,x^0)=(\lambda_0,0)$ and $x^t\neq 0$ for $t\neq 0$, and $\F(\lambda^t,x^t)=0$. Moreover, all solutions of $\F(\lambda,x)=0$ in a neighbourhood of $(\lambda_0,0)$ are on this curve or are trivial. Furthermore, the curve admits the asymptotic expansion $x^t=tx_0+o(t)$.
\end{theorem}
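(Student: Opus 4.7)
The plan is to carry out a Lyapunov--Schmidt reduction followed by the division-by-$t$ trick that is the heart of this classical theorem. In broad strokes, I would split both the domain and the target according to the Fredholm structure at $(\lambda_0,0)$, use the implicit function theorem to solve the ``nondegenerate'' half of $\F=0$ uniquely, and then be left with a scalar equation whose solvability is governed exactly by the transversality hypothesis.

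More concretely, first pick a closed complement $X_1$ of $\mathrm{span}(x_0)$ in $X$ and a one-dimensional complement $Y_1$ of $\im \F_x(\lambda_0,0)$ (the latter is one-dimensional since $\F_x(\lambda_0,0)$ has Fredholm index zero and a one-dimensional kernel). Let $P$ denote a continuous projection onto $\im \F_x(\lambda_0,0)$ and set $Q=I-P$. Parametrising $x=tx_0+w$ with $t\in\R$ and $w\in X_1$, the equation $\F(\lambda,x)=0$ splits into $P\F(\lambda,tx_0+w)=0$ and $Q\F(\lambda,tx_0+w)=0$. For the first, the $w$-derivative at $(\lambda_0,0,0)$ is $P\F_x(\lambda_0,0)|_{X_1}\colon X_1\to \im \F_x(\lambda_0,0)$, which is a bijection. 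The implicit function theorem therefore yields a $C^1$ solution $w=w(\lambda,t)$ near $(\lambda_0,0)$; moreover, $\F(\lambda,0)=0$ together with uniqueness gives $w(\lambda,0)=0$, and differentiating the auxiliary equation in $t$ at $(\lambda_0,0)$ shows $w_t(\lambda_0,0)=0$ because $x_0\in\ker \F_x(\lambda_0,0)$.

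Substituting back reduces the problem to the scalar bifurcation equation $\Phi(\lambda,t)\coloneqq Q\F(\lambda,tx_0+w(\lambda,t))=0$. Since $\Phi(\lambda,0)=0$, the main technical step---and, in my view, the principal obstacle---is to factor $\Phi(\lambda,t)=t\,\Psi(\lambda,t)$ with $\Psi$ at least $C^1$; this is the \emph{Crandall--Rabinowitz rescaling}, and is carried out by writing $\Psi(\lambda,t)=\int_0^1 \Phi_t(\lambda,st)\,ds$, at the cost of losing one derivative (which is precisely why the $C^2$ hypothesis on $\F$ is needed). One then computes $\Psi(\lambda_0,0)=Q\F_x(\lambda_0,0)x_0=0$ (since $x_0\in\ker\F_x(\lambda_0,0)$) and $\Psi_\lambda(\lambda_0,0)=Q\F_{\lambda x}(\lambda_0,0)x_0$, which is nonzero by the transversality hypothesis. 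A final application of the implicit function theorem to $\Psi=0$ produces a $C^1$-curve $t\mapsto \lambda^t$ with $\lambda^0=\lambda_0$.

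Setting $x^t\coloneqq tx^0+w(\lambda^t,t)$ then yields the asserted solution curve. The asymptotic expansion $x^t=tx_0+o(t)$ is immediate from $w(\lambda_0,0)=0$ and $w_t(\lambda_0,0)=0$ via a Taylor expansion of $w$ in $t$. Local uniqueness of the branch follows from the uniqueness in the two implicit function theorem applications: every solution near $(\lambda_0,0)$ must have its $w$-component equal to $w(\lambda,t)$, and every nontrivial solution ($t\neq 0$) must additionally satisfy $\Psi(\lambda,t)=0$, forcing $\lambda=\lambda^t$.
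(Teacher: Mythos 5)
Your Lyapunov--Schmidt reduction with the Crandall--Rabinowitz rescaling $\Phi(\lambda,t)=t\,\Psi(\lambda,t)$ is correct and is precisely the standard proof of this theorem; the paper does not prove the statement itself but quotes it from Kielh\"ofer [Thm.~I.5.1], where the argument is exactly the one you give. The only cosmetic point is that the implicit function theorem in fact yields $w(\lambda,t)$ of class $C^2$ (not merely $C^1$, as you state), which is what you implicitly rely on when you need $\Phi_t$ to be $C^1$ so that $\Psi(\lambda,t)=\int_0^1\Phi_t(\lambda,st)\,ds$ is $C^1$.
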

Applied to our problem, we obtain the following result.
\begin{theorem}\label{thm:LocalBifurcation}
	Assume \eqref{ass:SL-spectrum} and that there exists $\lambda_0\in\R$ with $c(\lambda_0)\ne 0$ such that the dispersion relation
	\[\D(-(k\nu)^2,\lambda_0)=0,\]
	with $\D$ given by \eqref{eq:d(k,lambda)}, has exactly one solution $k_0\in\N$ and assume that the transversality condition
	\[\D_\lambda(-(k_0\nu)^2,\lambda_0)\neq 0\]
	holds. Then there exists $\varepsilon>0$ and a $C^1$-curve $(-\varepsilon,\varepsilon)\ni t\mapsto(\lambda^t,\eta^t,\phi^t)$ with $(\lambda^0,\eta^0,\phi^0)=(\lambda_0,0,0)$, $\eta^t\neq 0$ for $t\neq 0$, and $\F(\lambda^t,\eta^t,\phi^t)=0$. Moreover, all solutions of $\F(\lambda,\eta,\phi)=0$ in a neighbourhood of $(\lambda_0,0,0)$ are on this curve or are trivial. Furthermore, the curve admits the asymptotic expansion $(\eta^t,\phi^t)=t\T(\lambda_0)\theta+o(t)$, where
	\begin{align*}
		\theta(s,z)&=\beta^{-(k_0\nu)^2,\lambda_0}(s)\cos(k_0\nu z),\\
		[\T(\lambda_0)\theta](x,y)&=\left(-\frac{d}{c(\lambda_0)},\beta^{-(k_0\nu)^2,\lambda_0}(s)-\frac{2\psi^{\lambda_0}(s)+s\psi_s^{\lambda_0}(s)}{c(\lambda_0)}\right)\cos(k_0\nu z).
	\end{align*}
\end{theorem}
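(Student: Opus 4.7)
The plan is to verify the hypotheses of the Crandall--Rabinowitz theorem (Theorem \ref{thm:CrandallRabinowitz}) for $\F\colon\R\times\U\to X$ at the trivial solution $(\lambda_0,0,0)$. The smoothness and Fredholm properties come for free from Lemma \ref{lma:M_prop}: $\F=\mathrm{id}-\M$ is of class $C^2$ and $\M$ is compact on a neighbourhood of $(\lambda_0,0,0)$, so $\F_{(\eta,\phi)}(\lambda_0,0,0)$ is a compact perturbation of the identity on $X$, hence Fredholm with index zero.

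For the kernel, I would pass from $\F_{(\eta,\phi)}(\lambda_0,0,0)$ to $\LL(\lambda_0)$ via the $\T(\lambda_0)$-isomorphism, so that $\ker\F_{(\eta,\phi)}(\lambda_0,0,0)=\T(\lambda_0)\ker\LL(\lambda_0)$. Lemma \ref{lma:kernel} decomposes $\ker\LL(\lambda_0)$ along Fourier modes in $z$ (only cosines appear, since the evenness index in $Y$ enforces this). The zeroth mode is excluded by assumption \eqref{ass:SL-spectrum}, while a mode $k\ge 1$ survives precisely when $\tilde\beta^{-(k\nu)^2,\lambda_0}(1)\ne 0$ and $\D(-(k\nu)^2,\lambda_0)=0$. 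By hypothesis $k_0$ is the unique such index (with the convention $\D=\infty$ whenever $\tilde\beta^{\mu,\lambda}(1)=0$, so the degenerate case is also ruled out), and hence $\ker\LL(\lambda_0)$ is one-dimensional, spanned by $\theta(s,z)=\beta^{-(k_0\nu)^2,\lambda_0}(s)\cos(k_0\nu z)$. Applying $\T(\lambda_0)$ yields the generator of $\ker\F_{(\eta,\phi)}(\lambda_0,0,0)$ in the form written in the statement.

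For transversality, Lemma \ref{lma:transversality_condition} yields $\LL_\lambda(\lambda_0)\theta\notin\im\LL(\lambda_0)$, whereas Crandall--Rabinowitz asks for $\F_{\lambda,(\eta,\phi)}(\lambda_0,0,0)\T(\lambda_0)\theta\notin\im\F_{(\eta,\phi)}(\lambda_0,0,0)$. These two conditions are equivalent: differentiating $\LL(\lambda)=\F_{(\eta,\phi)}(\lambda,0,0)\circ\T(\lambda)$ in $\lambda$ gives
\[\LL_\lambda(\lambda_0)\theta=\F_{\lambda,(\eta,\phi)}(\lambda_0,0,0)\T(\lambda_0)\theta+\F_{(\eta,\phi)}(\lambda_0,0,0)\T_\lambda(\lambda_0)\theta,\]
whose second term lies in $\im\F_{(\eta,\phi)}(\lambda_0,0,0)$, and $\T(\lambda_0)$ being an isomorphism ensures $\im\LL(\lambda_0)=\im\F_{(\eta,\phi)}(\lambda_0,0,0)$. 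The hypothesis $\D_\lambda(-(k_0\nu)^2,\lambda_0)\ne 0$ then supplies transversality.

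Crandall--Rabinowitz now produces the $C^1$-curve $(\lambda^t,\eta^t,\phi^t)$ with $(\eta^t,\phi^t)=t\,\T(\lambda_0)\theta+o(t)$ and the claimed local uniqueness; substituting the explicit formula for $\T(\lambda_0)$ gives the displayed leading term. The real obstacle has already been absorbed into the earlier lemmas: the reduction of an \emph{a priori} implicit spectral condition on $\LL(\lambda_0)$ to the concrete scalar identities $\D(-(k\nu)^2,\lambda_0)=0$ and $\D_\lambda(-(k_0\nu)^2,\lambda_0)\ne 0$ required the good-unknown isomorphism $\T(\lambda)$, the singular Sturm--Liouville analysis producing $\tilde\beta^{\mu,\lambda}$, and the orthogonal-complement description of $\im\LL(\lambda_0)$ coming from the symmetric bilinear form. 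With those in hand, the bifurcation theorem applies essentially mechanically.
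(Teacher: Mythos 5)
Your proposal is correct and follows essentially the same route as the paper: verify the hypotheses of Theorem \ref{thm:CrandallRabinowitz} by combining Lemmas \ref{lma:M_prop}, \ref{lma:kernel} and \ref{lma:transversality_condition}, conjugating by the isomorphism $\T(\lambda_0)$ (your chain-rule argument for the equivalence of the two transversality conditions is exactly the implicit content of the paper's remark that $\F_{(\eta,\phi)}(\lambda_0,0,0)$ coincides with $\LL(\lambda_0)$ up to $\T(\lambda_0)$). The only point you leave implicit is why $\eta^t\neq 0$ (rather than merely $(\eta^t,\phi^t)\neq 0$) for $t\neq 0$: this follows from the asymptotic expansion after possibly shrinking $\varepsilon$, since the first component of $\T(\lambda_0)\theta$ is $-\frac{d}{c(\lambda_0)}\cos(k_0\nu z)\neq 0$.
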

\begin{proof}
	It is straightforward to apply Theorem \ref{thm:CrandallRabinowitz} in view of Lemmas \ref{lma:M_prop}, \ref{lma:kernel}, and \ref{lma:transversality_condition}, noticing that $F_{(\eta,\phi)}(\lambda_0,0,0)$ coincides with $\LL(\lambda_0)$ up to the isomorphism $\T(\lambda_0)$. Moreover, the asymptotic expansion tells us that $\eta(t)\neq 0$ after possibly shrinking $\varepsilon$.
\end{proof}

\section{Conditions for local bifurcation} \label{sec:conditions}
\subsection{Spectral properties}
In view of the defining equation \eqref{eq:tildebeta} for $\tilde\beta^{\mu,\lambda}$ and the dispersion relation $\D(\mu,\lambda)=0$ and writing $\varphi=\tilde\beta^{\mu,\lambda}$, we study the eigenvalue problem
\begin{subequations}\label{eigenvalue_problem}
	\begin{align}
	-d^{-2}s^{-3}(s^3\varphi')'+q^\lambda\varphi&=\mu\varphi&\text{in }(0,1),\\
	-g(\lambda)\varphi'(1)-h(\lambda)\varphi(1)&=\mu\varphi(1),&\label{eigenvalue_problem_BC}
	\end{align}
\end{subequations}
which is a singular Sturm-Liouville problem on $(0,1)$. Here and in the following, we denote
\begin{align*}
	q^\lambda(s)&\coloneqq-d^2s^2\gamma'(d^2s^2\psi^\lambda(s))-(FF')'(d^2s^2\psi^\lambda(s)),\\
	g(\lambda)&\coloneqq\sigma^{-1}d^{-1}c(\lambda)^2>0,\\
	h(\lambda)&\coloneqq d^{-2}+\sigma^{-1}d^{-1}c(\lambda)\left(2c(\lambda)+d^{-2}F(m(\lambda))^2+(d^2\gamma+FF')(m(\lambda))\right).
\end{align*}
Notice that we left out the condition $\tilde\beta^{\mu,\lambda}_s(0)=0$ in view of Lemma \ref{regularity_lemma} below. We first introduce the operators $T$ and $\tau$ via
\[D(T)=D(\tau)=\{\varphi\in L^2_{s^3}(0,1):\varphi,s^3\varphi'\in\text{AC}_{\text{loc}}(0,1],s^{-3}(s^3\varphi')'\in L^2_{s^3}(0,1)\}\]
and
\[T\varphi=-d^{-2}s^{-3}(s^3\varphi')',\quad\tau\varphi=T\varphi+q^\lambda\varphi,\quad\varphi\in D(\tau).\]
We collect some important properties of $T$, $\tau$, and $D(T)$:
\begin{lemma}\label{Ttau_lemma}
	The following holds:
	\begin{enumerate}[label=(\roman*)]
		\item The operators $T$ and $\tau$ are of limit point type at $0$ (and of regular type at $1$).
		\item For any $\varphi,\chi\in D(T)$ we have \[\lim_{s\to 0}\left(s^3\varphi'(s)\overline\chi(s)-\varphi(s)s^3\overline\chi'(s)\right)=0.\] In particular, \[\lim_{s\to 0}s^3\varphi(s)=\lim_{s\to 0}s^3\varphi'(s)=0.\]
	\end{enumerate}
\end{lemma}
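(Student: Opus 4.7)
My plan is to treat parts (i) and (ii) in sequence, with the real work sitting in (ii).

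For (i), I would exhibit the fundamental solutions of the unperturbed problem explicitly: $T\varphi=0$ reduces to $(s^3\varphi')'=0$, so the two-dimensional solution space is spanned by $1$ and $s^{-2}$. Since $\int_0^\varepsilon s^3\,ds<\infty$ while $\int_0^\varepsilon s^3\cdot s^{-4}\,ds=\int_0^\varepsilon s^{-1}\,ds=\infty$, only one of these is in $L^2_{s^3}$ near $s=0$, which is exactly the Weyl definition of the limit point case at $0$. At $s=1$ both the coefficient of $\varphi'$ and the weight are smooth and positive, so the endpoint is regular. For $\tau=T+q^\lambda$, assumption \eqref{eq:assumptions_gamma_F} together with the continuity of $\psi^\lambda$ on $[0,1]$ ensures $q^\lambda\in L^\infty(0,1)$, and the limit point/limit circle classification is invariant under bounded perturbations of the potential; alternatively, a variation-of-parameters fixed-point argument on $[0,\varepsilon]$ produces a solution of $\tau\varphi=0$ behaving like $s^{-2}$ near $s=0$ (possibly with a logarithmic correction), which still fails to be in $L^2_{s^3}$.

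For (ii) I would first establish the two ``in particular'' limits separately and then combine them for the full bilinear identity. Set $F(s):=s^3\varphi'(s)$. Since $F'(s)=-d^2 s^3 T\varphi(s)$ and $s^3 T\varphi\in L^1(0,1)$ by Cauchy--Schwarz, the limit $C:=\lim_{s\to 0^+}F(s)$ exists. If $C\neq 0$, then $\varphi'(s)\sim C/s^3$ forces $\varphi(s)\sim -C/(2s^2)$, whence $s^3|\varphi(s)|^2\sim C^2/(4s)$ is non-integrable, contradicting $\varphi\in L^2_{s^3}$. Hence $C=0$, and Cauchy--Schwarz applied to $F(s)=-d^2\int_0^s u^3 T\varphi(u)\,du$ gives the sharper estimate $|F(s)|\le\tfrac{d^2}{2}s^2 h_\varphi(s)$ with $h_\varphi(s):=\|T\varphi\|_{L^2_{s^3}(0,s)}\to 0$. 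Integrating $|\varphi'(s)|\le\tfrac{d^2}{2}h_\varphi(s)/s$ from $s$ up to $1$ then produces the logarithmic growth bound $|\varphi(s)|\le |\varphi(1)|+\tfrac{d^2}{2}h_\varphi(1)\log(1/s)$, from which $s^3\varphi(s)\to 0$ follows at once.

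With these bounds the bilinear identity is easy:
\[
|s^3\varphi'(s)\,\overline{\chi(s)}|\le\tfrac{d^2}{2}s^2 h_\varphi(s)\cdot O(\log(1/s))\to 0
\]
because $s^2\log(1/s)\to 0$, and symmetrically for $\varphi(s)\,s^3\overline{\chi'(s)}$. The main obstacle is precisely this last step: a naive bound $|\chi(s)|\lesssim s^{-2}$, which is all that $\lim s^3\chi'=0$ alone would permit, would allow $s^3\varphi'\overline\chi$ to blow up. The resolution is to leverage \emph{both} $\chi\in L^2_{s^3}$ \emph{and} $T\chi\in L^2_{s^3}$ simultaneously to upgrade to $\chi=O(\log(1/s))$, which is the essential input for the final estimate.
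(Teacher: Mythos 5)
Your proof is correct, and for part (ii) it takes a genuinely different route from the paper. The paper derives (ii) as a soft consequence of (i): since $T$ is of limit point type at $0$, the vanishing of the Lagrange bracket $\lim_{s\to 0}\left(s^3\varphi'\overline\chi-\varphi s^3\overline\chi'\right)$ for $\varphi,\chi\in D(T)$ is quoted directly from standard Sturm--Liouville theory (Zettl's monograph), and the two \enquote{in particular} limits are then read off by inserting the specific elements $\chi=1$ and $\chi=s$ of $D(T)$. You go in the opposite direction: you prove the pointwise limits first, by a direct quantitative argument (writing $(s^3\varphi')'=-d^2s^3T\varphi$, showing that $\lim_{s\to0}s^3\varphi'(s)$ exists and must vanish on pain of $\varphi(s)\sim -C/(2s^2)\notin L^2_{s^3}$, then upgrading to $|s^3\varphi'(s)|\lesssim s^2h_\varphi(s)$ and $\varphi(s)=O(\log(1/s))$), and only afterwards assemble the bilinear identity from these decay rates. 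Your version is self-contained and elementary, replaces two citations by explicit estimates, and yields strictly more than the lemma states (logarithmic growth of $\varphi$ rather than merely $s^3\varphi\to0$); the paper's version is shorter and leans on general theory it needs anyway for the limit-point classification. Part (i) is handled essentially identically in both proofs (exhibit the solution $s^{-2}\notin L^2_{s^3}$ of $T\varphi=0$, note regularity at $s=1$, and invoke stability of the classification under the bounded perturbation $q^\lambda$). One cosmetic remark: your auxiliary function $F(s)=s^3\varphi'(s)$ collides with the swirl function $F$ of the paper and should be renamed.
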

\begin{proof}
	It is easy to see that $T$ is of limit point type at $0$, since $\varphi(s)=s^{-2}\notin L^2_{s^3}(0,1)$ solves $T\varphi=0$. Since $q^\lambda\in L^\infty(0,1)$, $\tau$ is also of limit point type at $0$ according to \cite[Corollary 7.4.1]{Zettl}. Thus, (i) is proved. As for (ii), the first statement is an immediate consequence of $T$ being of limit point type at $0$; see \cite[Lemmas 10.2.3, 10.4.1(b)]{Zettl}. Plugging in $\chi(s)=1$ and then $\chi(s)=s$ (which both belong to $D(T)$) yields the second statement.
\end{proof}
As a consequence the following result holds; in particular, this explains why we could leave out $\phi'(0)=0$ in \eqref{eigenvalue_problem}.
\begin{lemma}\label{regularity_lemma}
	Let $q,f\in C^{0,\alpha}([0,1])$ (or, equivalently, $\I q,\I f\in C^{0,\alpha}(\overline{B_1(0)})$) and $\varphi\in D(T)$ satisfy \[T\varphi=q\varphi+f.\] Then, $\I\varphi\in C^{2,\alpha}(\overline{B_1(0)})$ and solves \[\Delta_4\I\varphi=\I q\I\varphi+\I f.\] Obviously, the converse also holds. Moreover, in this case $\varphi\in C^{2,\alpha}([0,1])$ and $\varphi'(0)=0$.
\end{lemma}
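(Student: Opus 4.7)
The key observation is that the radial Laplacian in four dimensions is $\Delta_4 u(y) = s^{-3}(s^3 u'(s))'$ for $u(y)=u(|y|)$ with $s=|y|$, so that, up to the factor $-d^{-2}$, the operator $T$ is precisely the 4D Laplacian acting on radial functions. The 1D equation $T\varphi = q\varphi + f$ on $(0,1)$ thus corresponds, for radial functions on the unit ball $B_1(0)\subset\R^4$, to the elliptic PDE with $C^{0,\alpha}$ coefficients asserted in the lemma. The plan is therefore to establish the appropriate weak formulation on $B_1(0)$ \emph{through the origin}, invoke standard elliptic regularity, and read off $\varphi'(0)=0$ from the rotational symmetry.

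First, I would record the elementary equivalence $q\in C^{0,\alpha}([0,1])\iff\I q\in C^{0,\alpha}(\overline{B_1(0)})$, which follows from $||y|-|z||\le|y-z|$ in one direction and from restriction to a diameter in the other. Next, I verify that $\I\varphi\in H^1(B_1(0))$. The $L^2$-bound is immediate because $\int_{B_1}|\I\varphi|^2\,dy$ equals a constant times $\int_0^1|\varphi|^2 s^3\,ds$. For the gradient, an integration by parts yields
\[\int_0^1 |\varphi'|^2 s^3\,ds = \bigl[s^3\varphi'\overline\varphi\bigr]_0^1 - \int_0^1 s^{-3}(s^3\varphi')'\,\overline\varphi\, s^3\,ds,\]
where the boundary term at $s=0$ vanishes by Lemma \ref{Ttau_lemma}(ii) and all remaining terms are finite in view of $\varphi\in D(T)$ and the regularity of $\varphi$ at $s=1$.

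The second step is to show that $\I\varphi$ is a weak $H^1$-solution of the 4D PDE on all of $B_1(0)$, including through the origin. For a radial test function $\zeta\in C_c^\infty(B_1(0))$, multiplying the ODE by $\zeta s^3$ and integrating from $0$ to $1$ yields, after integration by parts, the desired weak formulation, with the boundary contribution at $s=0$ suppressed once more by Lemma \ref{Ttau_lemma}(ii). General test functions reduce to the radial case by averaging over $SO(4)$, since $\I\varphi$, $\I q$, and $\I f$ are all rotationally invariant. This is the main technical subtlety of the proof: the limit-point property of $T$ at $0$ is precisely what prevents the singularity at the center from contributing a spurious distributional source.

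Finally, I combine two complementary regularity results. Near the origin, standard interior Schauder estimates applied to the 4D elliptic equation with $C^{0,\alpha}$ data give $\I\varphi\in C^{2,\alpha}(\overline{B_{1/2}(0)})$. Away from the origin, the defining ODE is regular, and a standard ODE bootstrap yields $\varphi\in C^{2,\alpha}([\varepsilon,1])$ for every $\varepsilon\in(0,1)$, hence $\I\varphi\in C^{2,\alpha}$ on the corresponding closed annulus up to $|y|=1$. Patching these statements gives $\I\varphi\in C^{2,\alpha}(\overline{B_1(0)})$. Since every $C^1$ radial function on $\R^4$ has vanishing gradient at the origin, we automatically obtain $\varphi'(0)=0$. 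The converse direction is immediate: a radial $C^{2,\alpha}$-solution of the PDE on the ball restricts to a $\varphi\in D(T)$ satisfying $T\varphi = q\varphi + f$.
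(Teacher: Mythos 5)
Your argument is correct and follows essentially the same route as the paper: interpret $T$ as the radial 4D Laplacian, establish the weak formulation across the origin by using Lemma \ref{Ttau_lemma}(ii) to suppress the contribution at $s=0$, and conclude by elliptic regularity. The only cosmetic difference is that the paper settles for $\I\varphi\in W^{1,1}(B_1(0))$ (immediate from $s^3\varphi'\in L^\infty(0,1)$) rather than $H^1$, which avoids having to justify that the boundary term $s^3\varphi'\overline{\varphi}\to 0$ in your gradient computation --- a limit that Lemma \ref{Ttau_lemma}(ii) does not quite yield directly without an extra bootstrap showing $\varphi$ is bounded near $0$.
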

\begin{proof}
	Clearly, $\I\varphi$ has weak derivatives on $B_1(0)\setminus\{0\}$; in particular, $\nabla_4\I\varphi=\varphi'e_s$ a.e. First, we claim that this also holds on $B_1(0)$. To this end, we first note that $\I\varphi\in L^2(B_1(0))$ due to $\varphi\in L^2_{s^3}(0,1)$. Now fix $v\in C_c^\infty(B_1(0);\R^4)$ and let $\varepsilon>0$. We have to pass to the limit $\varepsilon\to 0$ in the identity \[\int_{\varepsilon\le|y|\le1}\nabla_4\I\varphi\cdot v\,dy=-\int_{\varepsilon\le|y|\le1}\I\varphi\nabla_4\cdot v\,dy-\int_{|y|=\varepsilon}\I\varphi v\cdot e_s\,dS_y;\] note that the surface integral is well-defined since $\varphi\in\text{AC}_{\text{loc}}(0,1]$. Passing to the limit in the volume integrals is easy, as $|\nabla_4\I\varphi\cdot v|\le|\varphi'||v|$, $|\I\varphi\nabla_4\cdot v|\le|\varphi||\nabla_4 v|$, and $s^3\varphi,s^3\varphi'\in L^\infty(0,1)$ due to Lemma \ref{Ttau_lemma}(ii). Also because of Lemma \ref{Ttau_lemma}(ii) the surface integral vanishes in the limit, since its modulus can be estimated by $C\varepsilon^3|\varphi(\varepsilon)|$, where $C>0$ only depends on $\|v\|_\infty$.
	
	The next step is to show that $\I\varphi$ solves $\Delta_4\I\varphi=\I q\I\varphi+\I f$ on $B_1(0)$ in the weak sense. Clearly, we infer from the preceding considerations that $\I\varphi\in W^{1,1}(B_1(0))$. For fixed $v\in C_c^\infty(B_1(0))$ it holds that \[-\int_{B_1(0)}\nabla_4\I\varphi\cdot\nabla_4 v\,dy=-\int\int_0^1\varphi'v_ss^3\,ds\,d\Omega=\int\int_0^1(q\varphi+f)vs^3\,ds\,d\Omega=\int_{B_1(0)}(\I q\I\varphi+\I f)v\,dy,\] where $\int\cdots d\Omega$ denotes the integration with respect to the three angles in spherical coordinates of $\R^4$. It is very important to notice that here no boundary terms at $s=0$ appear although $v$ does not have to vanish there. This is due to the fact that $\lim_{s\to 0}s^3\varphi'(s)=0$ (see Lemma \ref{Ttau_lemma}(ii)), so the weak form \[-\int_0^1\varphi'w's^3\,ds=\int_0^1(q\varphi+f)ws^3\,ds\] also applies for smooth functions $w$ on $[0,1]$ having support at $s=0$ (but not at $s=1$).
	
	Finally, we infer from elliptic regularity that $\I\varphi\in C^{2,\alpha}(\overline{B_1(0)})$. Indeed, since $\Delta_4\I\varphi=\I q\I\varphi+\I f\in L^2(B_1(0))$, we have $\I\varphi\in H^2(B_1(0))\subset L^p(B_1(0))$, $1\le p<\infty$. Thus, $\Delta_4\I\varphi\in L^p(B_1(0))$ and $\I\varphi\in W^{2,p}(B_1(0))\subset C^{0,\alpha}(\overline{B_1(0)})$ for $p$ large. Hence,  $\Delta_4\I\varphi\in C^{0,\alpha}(\overline{B_1(0)})$ and therefore $\I\varphi\in C^{2,\alpha}(\overline{B_1(0)})$. The remaining statements clearly hold true.
\end{proof}

To introduce a functional-analytic setting when also taking the boundary condition \eqref{eigenvalue_problem_BC} into account, we let $H=L^2_{s^3}(0,1)\times\C$. In the following, we write elements $u\in H$ as $u=(\varphi,b)$. Equipped with the indefinite inner product \[[u_1,u_2]=\langle d^2\varphi_1,\varphi_2\rangle_{L^2_{s^3}}-g(\lambda)^{-1}b_1\overline{b_2},\] $H$ becomes a Pontryagin $\pi_1$-space. Furthermore, we introduce the operator $K$ given by \[D(K)=\{u\in H:\varphi\in D(\tau),b=\varphi(1)\}\] and \[Ku=\left(\tau\varphi,-g(\lambda)\varphi'(1)-h(\lambda)\varphi(1)\right),\quad u\in D(K),\] which is clearly densely defined. Observe that the eigenvalues (-functions) of $K$ are exactly the eigenvalues (-functions) of \eqref{eigenvalue_problem}. We have the following.
\begin{lemma}
	$K$ is self-adjoint.
\end{lemma}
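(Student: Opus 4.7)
The strategy is the standard two-step procedure adapted to the Pontryagin-space setting: first verify that $K$ is symmetric with respect to $[\cdot,\cdot]$, and then show that $D(K^*)\subseteq D(K)$ with $K^*=K$ there; combined with symmetry this yields $K=K^*$. Since $[\cdot,\cdot]$ differs from an inner product only by a sign on the one-dimensional $\C$-factor, the usual definition of the adjoint applies verbatim.

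For symmetry, fix $u_j=(\varphi_j,\varphi_j(1))\in D(K)$ for $j=1,2$. A direct computation gives
\[
[Ku_1,u_2]-[u_1,Ku_2]=d^2\int_0^1(\tau\varphi_1\,\overline{\varphi_2}-\varphi_1\,\overline{\tau\varphi_2})s^3\,ds+\varphi_1'(1)\overline{\varphi_2(1)}-\varphi_1(1)\overline{\varphi_2'(1)},
\]
where the boundary-condition contributions coming from $Ku_j$ generate precisely the second group of terms (the $h(\lambda)$-parts cancel pairwise because they are conjugate-symmetric). The $q^\lambda$-terms in $\tau$ cancel pointwise, and two integrations by parts on the remaining integral yield
\[
d^2\int_0^1(\tau\varphi_1\,\overline{\varphi_2}-\varphi_1\,\overline{\tau\varphi_2})s^3\,ds=-\bigl[s^3\varphi_1'\overline{\varphi_2}-\varphi_1 s^3\overline{\varphi_2'}\bigr]_0^1.
\]
At $s=1$ this boundary term is exactly the negative of the surface contribution above, and at $s=0$ it vanishes by Lemma \ref{Ttau_lemma}(ii), since $\varphi_1,\varphi_2\in D(\tau)=D(T)$. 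Hence $[Ku_1,u_2]=[u_1,Ku_2]$.

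Next, take $u=(\varphi,b)\in D(K^*)$ with $K^*u=(\chi,c)\in H$, so that $[Kv,u]=[v,K^*u]$ for every $v=(\psi,\psi(1))\in D(K)$. Restricting to $\psi\in C_c^\infty((0,1))$, all boundary evaluations disappear and the identity reduces to
\[
d^2\int_0^1\tau\psi\,\overline{\varphi}\,s^3\,ds=d^2\int_0^1\psi\,\overline{\chi}\,s^3\,ds.
\]
By standard ODE regularity on any compact subinterval of $(0,1]$ this forces $\varphi\in\mathrm{AC}_{\mathrm{loc}}(0,1]$, $s^3\varphi'\in\mathrm{AC}_{\mathrm{loc}}(0,1]$, and $\tau\varphi=\chi$ a.e.; since $\chi\in L^2_{s^3}(0,1)$, this places $\varphi\in D(\tau)$. (The limit-point property at $0$ from Lemma \ref{Ttau_lemma}(i) is implicitly used here: it guarantees that no additional boundary condition at $0$ enters the definition of $D(\tau)$, so the distributional equation alone suffices.) Now allow $\psi\in D(\tau)$ with arbitrary values of $\psi(1)$ and $\psi'(1)$ and perform the same two integrations by parts, again using Lemma \ref{Ttau_lemma}(ii) to kill the boundary term at $s=0$. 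After subtracting $d^2\int_0^1\psi\overline{\chi}s^3\,ds$ from both sides and collecting the boundary contributions at $s=1$, one is left with the identity
\[
\psi'(1)\bigl(\overline{b}-\overline{\varphi(1)}\bigr)+\psi(1)\bigl(\overline{\varphi'(1)}+g(\lambda)^{-1}h(\lambda)\overline{b}+g(\lambda)^{-1}\overline{c}\bigr)=0
\]
valid for all admissible choices of $\psi(1),\psi'(1)\in\C$. Independence of these two boundary data forces $b=\varphi(1)$ and $c=-g(\lambda)\varphi'(1)-h(\lambda)\varphi(1)$, which is exactly $u\in D(K)$ and $K^*u=Ku$. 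Together with symmetry this proves $K=K^*$.

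The only delicate point is the passage from the distributional identity against $C_c^\infty((0,1))$ test functions to membership in $D(\tau)$, together with the use of Lemma \ref{Ttau_lemma}(ii) when integrating by parts against non-compactly-supported $\psi$; both are handled as indicated, and the singular endpoint $s=0$ never produces a surviving boundary term thanks to the limit-point character of $\tau$.
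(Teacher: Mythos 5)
Your proof is correct, but it takes a genuinely different route from the paper in its second half. The symmetry part coincides with the paper's: Lagrange's identity on $[x,1]$, cancellation of the $h(\lambda)$-terms, matching of the surface contribution at $s=1$ against the boundary condition, and Lemma \ref{Ttau_lemma}(ii) to kill the boundary form at the singular endpoint. For the passage from symmetric to self-adjoint, the paper does not compute $D(K^*)$ directly: it introduces the fundamental symmetry $J$, invokes Hinton's theorem for Sturm--Liouville operators with eigenvalue-dependent boundary conditions to get self-adjointness of $JK$ in the Hilbert space $(H,\langle\cdot,\cdot\rangle_J)$, and then transfers to the Pontryagin-space adjoint via $J^{\langle*\rangle}=J$ and $JK^{\langle*\rangle}J=K^*$. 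You instead carry out the classical maximal-domain computation by hand: testing against $C_c^\infty((0,1))$ to identify $\tau\varphi=\chi$ distributionally and place $\varphi$ in the maximal domain $D(\tau)$, then testing against elements with arbitrary boundary data at $s=1$ to extract $b=\varphi(1)$ and $c=-g(\lambda)\varphi'(1)-h(\lambda)\varphi(1)$. This is in effect a self-contained reproof of the relevant case of Hinton's theorem; it is more elementary and avoids the $J$-adjoint bookkeeping, at the price of the (standard but compressed) distributional regularity step. One small imprecision: your parenthetical attributes the conclusion $\varphi\in D(\tau)$ to the limit-point property, but $D(\tau)$ is the maximal domain and requires no boundary condition at $0$ by definition; the limit-point property enters only through Lemma \ref{Ttau_lemma}(ii), i.e., in guaranteeing that the boundary form at $s=0$ vanishes for all maximal-domain functions in your integrations by parts --- which you do invoke at the right places, so the argument stands.
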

\begin{proof}
	We first prove that $K$ is symmetric. To this end, for $u_1,u_2\in H$, $x\in (0,1)$ let \[[u_1,u_2]_x\coloneqq\langle d^2\varphi_1,\varphi_2\rangle_{L^2_{s^3}(x,1)}-g(\lambda)^{-1}b_1\overline{b_2}.\] Now if $u_1,u_2\in D(K)$ we have, after integrating by parts,
	\[[Ku_1,u_2]_x-[u_1,Ku_2]_x=x^3\varphi_1'(x)\overline{\varphi_2}(x)-\varphi_1(x)x^3\overline{\varphi_2}'(x).\] Clearly, $K$ is symmetric if and only if the first expression converges to $0$ as $x\to 0$ (for any $u_1,u_2\in D(K)$). But the second expression converges to $0$ due to Lemma \ref{Ttau_lemma}(ii).

	To see that $K$ is even self-adjoint, we first note that obviously $H$ admits the fundamental decomposition $H=(L^2_{s^3}(0,1)\times\{0\})\dot+(\{0\}\times\C)$ into a positive and negative subspace. Associated to this decomposition is the fundamental symmetry \[J=\begin{pmatrix}\mathrm{id}&0\\0&-1\end{pmatrix}\] and the Hilbert inner product $\langle u_1,u_2\rangle_J=[Ju_1,u_2]=\langle d^2\varphi_1,\varphi_2\rangle_{L^2_{s^3}}+g(\lambda)^{-1}b_1\overline{b_2}$. The operator $JK$ is self-adjoint with respect to $\langle\cdot,\cdot\rangle_J$, since now the assumptions of \cite[Theorem 1]{Hinton} are satisfied. In particular, denoting the $J$-adjoint by an upper index $\langle*\rangle$, we have \[D(K)=D(JK)=D\left((JK)^{\langle*\rangle}\right)=D\left(K^{\langle*\rangle}J^{\langle*\rangle}\right)=D\left(K^{\langle*\rangle}J\right)=D(JK^*)=D(K^*),\] as $J^{\langle*\rangle}=J$ and $JK^{\langle*\rangle}J=K^*$ (cf. \cite[Lemma VI.2.1]{Bognar}). Since $K$ is already known to be symmetric, the proof is complete.
\end{proof}
Now we can prove the following important result.
\begin{proposition}
	The spectrum of $K$ is purely discrete and consists of only (geometrically) simple eigenvalues.
\end{proposition}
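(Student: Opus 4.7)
The plan is to prove geometric simplicity via the limit-point property of $\tau$ at $s=0$, and discreteness by exhibiting a compact resolvent. For simplicity, suppose $u = (\varphi, b)$ and $u' = (\varphi', b')$ are two eigenvectors of $K$ for the same eigenvalue $\mu$. Both $\varphi$ and $\varphi'$ lie in $D(\tau)$ and satisfy $\tau\psi = \mu\psi$. Because $\tau$ is of limit-point type at $0$ by Lemma~\ref{Ttau_lemma}(i), only one linearly independent solution of this second-order ODE is square-integrable with weight $s^3$ near $0$. Hence $\varphi' = c\varphi$ for some scalar $c$, and then $b' = \varphi'(1) = cb$, forcing $u' = cu$. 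Thus every eigenspace of $K$ is at most one-dimensional.

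For discreteness, my plan is first to show that the embedding of $D(K)$, equipped with the graph norm, into $H$ is compact. Take $u_n = (\varphi_n, b_n) \in D(K)$ with $\|u_n\|_H + \|Ku_n\|_H$ bounded. Then $\|\varphi_n\|_{L^2_{s^3}}$, $\|\tau\varphi_n\|_{L^2_{s^3}}$, $|b_n| = |\varphi_n(1)|$, and via the boundary condition also $|\varphi_n'(1)|$, are all bounded. Using the identification from Lemma~\ref{regularity_lemma} of $D(\tau)$ with radial $H^2$-type functions on $B_1(0) \subset \R^4$ -- specifically, $T\varphi$ corresponds to $-d^{-2}\Delta_4 \I\varphi$ -- the functions $\I\varphi_n - b_n$ have zero Dirichlet trace and $L^2$-bounded Laplacian, so $\|\I\varphi_n\|_{H^2(B_1(0))}$ is bounded by standard elliptic regularity. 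The compact embedding $H^2(B_1(0)) \hookrightarrow L^2(B_1(0))$, together with the finite-dimensional $b$-component, yields a subsequence convergent in $H$.

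To upgrade this to a compact resolvent, I need $\rho(K) \neq \emptyset$. This follows from Pontryagin-space theory: a self-adjoint operator in a $\pi_\kappa$-space has at most $\kappa$ pairs of non-real eigenvalues (see, e.g., Bognar or Azizov--Iokhvidov). Since $H$ is a $\pi_1$-space, there exists $\mu_0 \in \C \setminus \R$ belonging to $\rho(K)$. Then $(K - \mu_0)^{-1} \colon H \to D(K) \hookrightarrow H$ is compact, so its spectrum is discrete with $0$ as the only possible accumulation point, and by the spectral mapping theorem the spectrum of $K$ is purely discrete.

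The main technical obstacle I anticipate is the elliptic-regularity step underlying the compact embedding: one must carefully convert between $\varphi \in D(\tau)$ with $\tau\varphi \in L^2_{s^3}$ and $H^2$-regularity of $\I\varphi$ on the four-dimensional ball, relying on Lemma~\ref{regularity_lemma} to justify the weak form of the equation across the singular endpoint. The device of subtracting off the constant boundary value $b_n$ to reduce to zero Dirichlet data is what makes the elliptic estimate immediate; without it, one would have to track boundary regularity of the inhomogeneous trace separately.
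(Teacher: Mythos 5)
Your proof is correct in substance, but it takes a genuinely different route to discreteness than the paper. For geometric simplicity you argue exactly as the paper does (the limit-point property at $s=0$ leaves at most a one-dimensional space of $L^2_{s^3}$-solutions of $\tau\varphi=\mu\varphi$ near the origin, and the $b$-component is then determined); only note that in the limit-point case the correct statement is \emph{at most} one independent square-integrable solution, which is all you need. For discreteness, the paper instead transfers the problem to the scalar operator $\tau$: following Hinton it shows that $K$ and $\tau$ have the same essential spectrum (the $\lambda$-dependent boundary condition being a rank-one-type modification), and then kills the essential spectrum of $\tau$ with Friedrichs' limit criterion, using that $q^\lambda$ is bounded below and that $q^\lambda(s)+\bigl(4d^2s^6(\int_s^1\sigma^{-3}\,d\sigma)^2\bigr)^{-1}\to\infty$ as $s\to0$. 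You instead prove compactness of the resolvent directly, via the four-dimensional lifting, elliptic regularity and Rellich, together with the Pontryagin-space fact that the non-real spectrum of a self-adjoint operator in $\Pi_1$ is a finite set of eigenvalues, so that $\rho(K)\neq\emptyset$. Both routes are legitimate: yours is more self-contained analytically and yields the stronger conclusion that the resolvent is compact (hence finite algebraic multiplicities and no finite accumulation points), while the paper's avoids needing $\rho(K)\neq\emptyset$ at this stage but outsources the work to Hinton, Friedrichs and Dunford--Schwartz; the $\Pi_1$-input you use is in any case invoked by the paper itself in the subsequent propositions. Two points in your argument deserve to be written out: first, the global Dirichlet estimate for $\I\varphi_n-b_n$ presupposes $\I\varphi_n\in H^1(B_1(0))$ with trace $b_n$, which is not part of the definition of $D(\tau)$; you obtain it by combining the interior estimate near the origin (available because the weak formulation of Lemma~\ref{regularity_lemma} carries no boundary contribution at $s=0$, thanks to Lemma~\ref{Ttau_lemma}(ii)) with the regular-endpoint ODE bounds near $s=1$, where the boundedness of $|\varphi_n'(1)|$ extracted from the boundary condition is genuinely used. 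Second, the step $\rho(K)\neq\emptyset$ requires the full Iokhvidov--Krein--Langer theorem that the non-real \emph{spectrum} (not merely the point spectrum) of a self-adjoint operator in $\Pi_\kappa$ consists of at most $\kappa$ pairs of normal eigenvalues; citing only a bound on the number of non-real eigenvalues would not by itself exclude non-real continuous spectrum.
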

\begin{proof}
	Following the proof of \cite[Theorem 2]{Hinton} using the $J$-norm $\|u\|_J=\sqrt{\langle u,u\rangle_J}$, we see that the essential spectra of $K$ and $\tau$ coincide. Notice that the criterion \cite[Theorem XIII.7.1]{DunfordSchwartz} applied there is purely topological and does not make use of an additional structure from an (definite or indefinite) inner product. To see that the essential spectrum of $\tau$ is empty, we can apply a criterion of \cite{Friedrichs}; see also \cite{Hinton}. Indeed, $q^\lambda$ is obviously bounded from below on $(0,1)$ and moreover \[\lim_{s\to 0}\left(q^\lambda(s)+\frac{1}{4d^2s^6\left(\int_s^1\sigma^{-3}\,d\sigma\right)^2}\right)=\lim_{s\to 0}d^{-2}s^{-2}(s^2-1)^{-2}=\infty.\] Finally, it is a priori clear that each eigenvalue of $K$ cannot have (geometric) multiplicity larger than two; the case of multiplicity two is excluded by the fact that $\tau$ is of limit point type at $0$.
\end{proof}
In fact, we can say more about the location of the eigenvalues of $K$. To this end, the following Lemma turns out to be useful.
\begin{lemma}\label{lma:Ku,u}
	For any $u\in D(K)$ we have
	\[[Ku,u]=\|\varphi'\|_{L^2_{s^3}}^2+\int_0^1 d^2s^3q^\lambda|\varphi|^2\,ds+\frac{h(\lambda)}{g(\lambda)}|\varphi(1)|^2.\]
\end{lemma}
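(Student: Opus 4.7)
The plan is to unpack the definitions of $K$, $\tau$, and the indefinite inner product $[\cdot,\cdot]$, then integrate by parts on a truncated interval $(\varepsilon,1)$ and pass to the limit $\varepsilon\to 0$. Writing $u=(\varphi,\varphi(1))\in D(K)$ and using $\tau\varphi=T\varphi+q^\lambda\varphi$, a direct substitution gives
\begin{align*}
[Ku,u]&=\int_0^1 d^2(T\varphi)\overline{\varphi}\,s^3\,ds+\int_0^1 d^2 s^3 q^\lambda|\varphi|^2\,ds+\varphi'(1)\overline{\varphi(1)}+\frac{h(\lambda)}{g(\lambda)}|\varphi(1)|^2,
\end{align*}
since $-g(\lambda)^{-1}(-g(\lambda)\varphi'(1)-h(\lambda)\varphi(1))\overline{\varphi(1)}=\varphi'(1)\overline{\varphi(1)}+\frac{h(\lambda)}{g(\lambda)}|\varphi(1)|^2$. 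Because $d^2(T\varphi)s^3=-(s^3\varphi')'$, the heart of the matter is to rewrite $-\int_0^1 (s^3\varphi')'\overline{\varphi}\,ds$ as $\|\varphi'\|_{L^2_{s^3}}^2$ up to a surface term at $s=1$ that cancels the summand $\varphi'(1)\overline{\varphi(1)}$.

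Since $\varphi,s^3\varphi'\in\mathrm{AC}_{\mathrm{loc}}(0,1]$ by the definition of $D(T)$, integration by parts on $(\varepsilon,1)$ is valid and yields
\begin{align*}
-\int_\varepsilon^1(s^3\varphi')'\overline{\varphi}\,ds=-\varphi'(1)\overline{\varphi(1)}+\varepsilon^3\varphi'(\varepsilon)\overline{\varphi(\varepsilon)}+\int_\varepsilon^1 s^3|\varphi'|^2\,ds.
\end{align*}
It therefore suffices to show that $\varepsilon^3\varphi'(\varepsilon)\overline{\varphi(\varepsilon)}\to 0$ as $\varepsilon\to 0$ and that $\varphi'\in L^2_{s^3}(0,1)$; the identity in the lemma then drops out because the two occurrences of $\varphi'(1)\overline{\varphi(1)}$ cancel.

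The main obstacle is controlling $\varphi$ and $\varphi'$ near $s=0$. Lemma \ref{Ttau_lemma}(ii) only gives $s^3\varphi(s)\to 0$ and $s^3\varphi'(s)\to 0$ separately, which is too weak for the product. To sharpen this I use that $f:=T\varphi=\tau\varphi-q^\lambda\varphi\in L^2_{s^3}(0,1)$, since $q^\lambda\in L^\infty(0,1)$ and $\tau\varphi\in L^2_{s^3}$. The relation $(s^3\varphi')'=-d^2 s^3 f$, integrated from $0$ to $s$ (with the known limit at $0$ killing the lower endpoint), gives $s^3\varphi'(s)=-d^2\int_0^s t^3 f(t)\,dt$, and Cauchy--Schwarz yields $|s^3\varphi'(s)|\le C s^2$, i.e.\ $|\varphi'(s)|\le C/s$. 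Integrating from $s$ to $1$ then produces $|\varphi(s)|=O(|\log s|)$. Combining these two bounds gives
\[|\varepsilon^3\varphi'(\varepsilon)\overline{\varphi(\varepsilon)}|=O(\varepsilon^2|\log\varepsilon|)\to 0,\]
while $s^3|\varphi'(s)|^2=O(s)$ near $0$ secures $\varphi'\in L^2_{s^3}(0,1)$.

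Passing to the limit $\varepsilon\to 0$ in the integration-by-parts identity therefore gives $\int_0^1 d^2(T\varphi)\overline{\varphi}s^3\,ds=-\varphi'(1)\overline{\varphi(1)}+\|\varphi'\|^2_{L^2_{s^3}}$. Plugging this into the expression for $[Ku,u]$ above and cancelling $\varphi'(1)\overline{\varphi(1)}$ yields the claimed formula.
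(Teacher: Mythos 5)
Your proof is correct, and its skeleton — unpack $[\cdot,\cdot]$, note that the $\C$-component contributes $\varphi'(1)\overline{\varphi(1)}+\frac{h(\lambda)}{g(\lambda)}|\varphi(1)|^2$, integrate $-\int(s^3\varphi')'\overline\varphi\,ds$ by parts on $(\varepsilon,1)$, and cancel the surface term at $s=1$ — is exactly the paper's. Where you genuinely add something is at the singular endpoint: the paper disposes of the boundary term at $0$ by citing Lemma \ref{Ttau_lemma}(ii), but as you rightly observe, the explicitly stated consequences there ($s^3\varphi(s)\to0$ and $s^3\varphi'(s)\to0$ separately, plus vanishing of the antisymmetrized bracket, which for $\chi=\varphi$ only controls $\operatorname{Im}(s^3\varphi'\overline\varphi)$) do not by themselves force $s^3\varphi'(\varepsilon)\overline{\varphi(\varepsilon)}\to0$, nor do they give $\varphi'\in L^2_{s^3}$, which is needed for the right-hand side of the lemma to be finite. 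Your bootstrap — writing $s^3\varphi'(s)=-d^2\int_0^s t^3 (T\varphi)(t)\,dt$ with $T\varphi\in L^2_{s^3}$, whence $s^3\varphi'(s)=O(s^2)$ (in fact $o(s^2)$ by Cauchy--Schwarz), $\varphi(s)=O(|\log s|)$, and $s^3|\varphi'|^2=O(s)$ — supplies precisely the quantitative control that makes the paper's one-line justification rigorous, and simultaneously settles the integrability of $|\varphi'|^2 s^3$. So: same route, but your version closes a step the paper leaves elliptical.
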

\begin{proof}
	The only critical point is to ensure that no boundary terms at $0$ appear after an integration by parts, which again follows from Lemma \ref{Ttau_lemma}(ii).
\end{proof}
\begin{proposition}
	$K$ has no or exactly two nonreal eigenvalues, and in the latter case they are the complex conjugate of each other. Moreover, the (real part of the) spectrum of $K$ is bounded from below.
\end{proposition}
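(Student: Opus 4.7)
My plan has two independent parts: the finiteness and conjugate-pair structure of the nonreal spectrum, and the boundedness from below of the real spectrum.

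For the first part I would use the standard Pontryagin space argument. Self-adjointness of $K$ with respect to $[\cdot,\cdot]$ yields $(\mu-\bar\mu)[u,u]=[Ku,u]-[u,Ku]=0$, so any eigenvector $u$ to a nonreal eigenvalue $\mu$ must satisfy $[u,u]=0$. An analogous identity shows that eigenvectors corresponding to eigenvalues $\mu_1,\mu_2$ with $\mu_2\neq\bar\mu_1$ are $[\cdot,\cdot]$-orthogonal. Were there two distinct nonreal eigenvalues not forming a conjugate pair, the span of their eigenvectors would be a two-dimensional neutral subspace, contradicting the defining property of a $\pi_1$-space that neutral subspaces have dimension at most $\kappa=1$. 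Reality of the coefficients of $K$ then gives that $\bar\mu$ is an eigenvalue with eigenvector $\bar u$ whenever $\mu$ is, by applying complex conjugation to $Ku=\mu u$.

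For the second part, my starting point is Lemma~\ref{lma:Ku,u}. Combining it with a standard trace inequality $|\varphi(1)|^2\le\epsilon\|\varphi'\|^2_{L^2_{s^3}}+C_\epsilon\|\varphi\|^2_{L^2_{s^3}}$ (for small $\epsilon$) and the boundedness of $q^\lambda$ on $[0,1]$ guaranteed by \eqref{eq:assumptions_gamma_F}, I would obtain
\[[Ku,u]\ge -M\|u\|_J^2\qquad\text{for all }u\in D(K).\]
Hence the companion operator $\tilde K:=JK$ is self-adjoint on the Hilbert space $(H,\langle\cdot,\cdot\rangle_J)$ and bounded below by $-M$. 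I would then write $K$ in block form with respect to the fundamental decomposition $H=H_+\oplus H_-$ (where $\dim H_-=1$), $K=\begin{pmatrix}A&B\\-B^*&d\end{pmatrix}$, with $A$ self-adjoint and semi-bounded on $H_+$, $B\colon H_-\to H_+$ of rank at most one, and $d\in\mathbb R$. A Schur complement computation then shows that for $\mu\in\mathbb C$ with $\mathrm{Re}\,\mu$ sufficiently negative, $K-\mu$ is invertible: $A-\mu$ has uniformly bounded inverse, while $(d-\mu)^{-1}BB^*$ is a small rank-one correction. Consequently $\mu\notin\sigma(K)$, yielding the desired lower bound on $\mathrm{Re}\,\sigma(K)$.

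The main obstacle will be this second part: the indefiniteness of $[\cdot,\cdot]$ prevents reading off a spectral lower bound directly from $[Ku,u]\ge -M\|u\|_J^2$, since eigenvectors may satisfy $[u,u]\le 0$. The one-dimensional negative direction must be absorbed separately by the rank-one perturbation argument above, which is where the $\pi_1$ structure is crucial. Some technical care is also needed to make the block decomposition rigorous for the unbounded operator $K$ (stability of $D(K)$ under the projections and suitable resolvent estimates), which can either be handled directly or by appealing to general Pontryagin-space spectral theory in the spirit of \cite{Bognar}.
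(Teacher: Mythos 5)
Your first part is correct and coincides with the paper's treatment, which simply cites \cite{IKL} for exactly the facts you spell out (neutrality of eigenvectors at nonreal eigenvalues, $[\cdot,\cdot]$-orthogonality of non-conjugate eigenspaces, and the bound $\kappa=1$ on dimensions of neutral subspaces). Your lower bound $[Ku,u]\ge -M\|u\|_J^2$ via Lemma \ref{lma:Ku,u} and a trace inequality is also fine, and you are right that indefiniteness prevents reading off the spectral bound from it directly.

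The gap is in the block decomposition: $K=\begin{pmatrix}A&B\\-B^*&d\end{pmatrix}$ with $B$ a \emph{bounded} rank-one operator does not exist for this $K$. Since $Ku=\bigl(\tau\varphi,-g(\lambda)\varphi'(1)-h(\lambda)\varphi(1)\bigr)$, the $(2,1)$ entry would have to be $\varphi\mapsto -g(\lambda)\varphi'(1)-(h(\lambda)+d)\varphi(1)$, which is an unbounded functional on $L^2_{s^3}(0,1)$, and the $\varphi'(1)$ term cannot be removed by any choice of $d$ because $g(\lambda)>0$; moreover $D(K)$ is not invariant under the fundamental projections ($(\varphi,0)\in D(K)$ forces $\varphi(1)=0$). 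The coupling between the differential part and the boundary component is essential and unbounded, so the Schur-complement computation cannot be run as described — this is not a removable technicality. The nearest rigorous substitute, $K-\mu=J\bigl((JK-\mu)+2\mu P_-\bigr)$, really is a rank-one perturbation of the semibounded operator $JK-\mu$, but it is not a \emph{small} one: $\|2\mu P_-(JK-\mu)^{-1}\|_J\le 2|\mu|/(|\mu|-M)\to 2$ as $\mu\to-\infty$, so a Neumann series does not close without finer asymptotics of $\langle(JK-\mu)^{-1}e,e\rangle_J$ on the one-dimensional negative direction. The paper instead perturbs off the explicit reference operator $K_0$ (the case $\gamma=F=0$): there $[K_0u,u]>0$, so by Pontryagin theory $K_0$ has exactly one negative eigenvalue $\mu_0$ and otherwise positive spectrum, with the resolvent bound $\|(K_0-\mu)^{-1}\|_J\le C/|\mu-\mu_0|$ for $\mu<\mu_0$ as in \cite[Lemma 3]{Wahlen06}; the crucial point is that $K-K_0$ \emph{is} bounded on $H$, because it consists of multiplication by $q^\lambda$ and a multiple of $\varphi(1)=b$, i.e.\ of the second coordinate of $u$. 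A Neumann series then gives invertibility of $K-\mu$ for $\mu<\mu_0-C\|K-K_0\|_J$. To repair your argument you would need to replace the block decomposition by a comparison of this type.
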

\begin{proof}
	The first assertion is clear since $H$ is a $\pi_1$-space and $K$ is self-adjoint; cf. \cite{IKL}. To prove the second statement, we use a perturbation argument. First notice that $q^\lambda$ does not affect the domain of the associated operator. Now let $K_0$ be the operator in the case $\gamma=F=0$, which yields $q^\lambda=0$ and $h(\lambda)>0$. By Lemma \ref{lma:Ku,u} we have $[K_0u,u]>0$ if $u\neq 0$. Thus, there exists exactly one negative eigenvector of $K_0$; cf. again \cite{IKL}. Therefore, $K_0$ has exactly one negative eigenvalue $\mu_0$ and its other eigenvalues are positive. With the same proof as in \cite[Lemma 3]{Wahlen06} we conclude that for some constant $C>0$ the estimate
	\[\|(K_0-\mu I)^{-1}\|_J\le\frac{C}{|\mu-\mu_0|},\quad\mu\in(-\infty,\mu_0),\]
	for the resolvent holds. If $\gamma$ and $F$ are arbitrary, we define the perturbation $A$ via $D(A)=\{u\in H:\varphi\in D(T),b=\varphi(1)\}$ and \[Au=\left(q^\lambda\varphi,-\sigma d^{-1}c(\lambda)\left(d^{-2}F(m(\lambda))^2+(d^2\gamma+FF')(m(\lambda))\right)\varphi(1)\right),\quad u\in D(A).\]
	Clearly, $A$ is densely defined and bounded, and we have $K=K_0+A$. Now consider a real $\mu<\mu_0-C\|A\|_J$. Because of \[K-\mu I=\left(I+A(K_0-\mu I)^{-1}\right)(K_0-\mu I)\] and \[\left\|A(K_0-\mu I)^{-1}\right\|_J\le\|A\|_J\cdot\frac{C}{|\mu-\mu_0|}<1,\] the resolvent operator $K-\mu I$ is invertible in view of the Neumann series. This completes the proof.
\end{proof}
Under a certain condition we can infer even more properties of the spectrum of $K$, as we see in what follows.
\begin{proposition}
	Assume that 
	\begin{align}\label{ass:realspec_algsimple}
	h(\lambda)>\|q^\lambda_-\|_\infty
	\end{align}
	where $q^\lambda_-$ denotes the negative part of $q^\lambda$. Then the operator $K$ has only real eigenvalues, $K$ has exactly one eigenvalue $\mu<-\|q^\lambda_-\|_\infty$, and all its other eigenvalues satisfy $\mu>-\|q^\lambda_-\|_\infty$. Moreover, all eigenvalues are algebraically simple.
\end{proposition}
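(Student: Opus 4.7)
The strategy is to work with the shifted operator $K_c := K + cI$, where $c := \|q^\lambda_-\|_\infty$, because under assumption \eqref{ass:realspec_algsimple} this shift turns out to be strictly positive in the Pontryagin inner product. The key computation is an application of Lemma \ref{lma:Ku,u}, which yields
\[
[K_c u, u] = \|\varphi'\|_{L^2_{s^3}}^2 + \int_0^1 d^2 s^3 (q^\lambda + c)|\varphi|^2 \, ds + \frac{h(\lambda) - c}{g(\lambda)}|\varphi(1)|^2,
\]
and each of the three terms is nonnegative (trivially; because $q^\lambda + c \geq 0$ by definition of the negative part $q^\lambda_-$; and by \eqref{ass:realspec_algsimple}). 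A short case check---if the expression vanishes then $\varphi$ is constant and $\varphi(1)=0$ (using $h(\lambda) > c$), hence $\varphi \equiv 0$ and $u = 0$---upgrades this to strict positivity: $[K_c u, u] > 0$ for every nonzero $u \in D(K)$.

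From this single fact, the three claimed properties follow by familiar arguments in the Pontryagin space $H$. For realness, I would recall that any eigenvector $u$ of $K$ associated with a nonreal eigenvalue $\mu$ is neutral (by self-adjointness in $[\cdot,\cdot]$), so $0 < [K_c u, u] = (\mu + c)[u,u] = 0$ gives a contradiction. Algebraic simplicity is handled analogously: a hypothetical Jordan chain $Ku = \mu u$, $Kv = \mu v + u$ with $u\neq 0$ forces $[u, u] = 0$ via the symmetry identity $[Kv, u] = [v, Ku]$, again contradicting strict positivity. Combined with the geometric simplicity already established in the previous proposition, this yields full algebraic simplicity.

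For the spectrum count, I would argue as follows. If $u$ is an eigenvector for a real eigenvalue $\mu$, then $(\mu + c)[u, u] = [K_c u, u] > 0$ forces $\mu+c$ and $[u,u]$ to have the same sign, in particular $\mu \ne -c$. Eigenvectors belonging to distinct real eigenvalues are automatically $[\cdot,\cdot]$-orthogonal, so all eigenvectors with $\mu < -c$ together span a strictly negative subspace; since $H$ is $\pi_1$, this subspace is at most one-dimensional, giving at most one eigenvalue in $(-\infty, -c)$. The one step that truly uses Pontryagin-space theory beyond the elementary identities above---and the main conceptual obstacle---is exhibiting at least one eigenvalue below $-c$. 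This I would deduce, exactly as in the preceding proof, from the existence of a maximal one-dimensional nonpositive invariant subspace of $K$ (cf.~\cite{IKL}), which must then be spanned by an eigenvector $u$ with $[u, u] \leq 0$; strict positivity of $[K_c \cdot, \cdot]$ upgrades this to $[u, u] < 0$ and hence $\mu < -c$, completing the proof.
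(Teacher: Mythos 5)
Your proof is correct and follows essentially the same route as the paper's: the key point is the inequality $(\mu + \|q^\lambda_-\|_\infty)[u,u] > 0$ obtained from Lemma \ref{lma:Ku,u} together with assumption \eqref{ass:realspec_algsimple}, combined with the $\pi_1$-structure of $H$ and the existence of exactly one nonpositive eigenvector from \cite{IKL}. The only differences are cosmetic---you package the estimate as strict positivity of the shifted form $[K_cu,u]$ on all of $D(K)$ rather than evaluating it only on eigenvectors, and you spell out the orthogonality and Jordan-chain arguments that the paper dispatches in one line.
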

\begin{proof}
	Let $\mu$ be an eigenvalue of $K$ and $u=(\varphi,\varphi(1))$ an associated eigenvector. Due to Lemma \ref{lma:Ku,u} we can calculate
	\begin{align*}
	\mu[u,u]&=[Ku,u]=\|\varphi'\|_{L^2_{s^3}}^2+\int_0^1 d^2s^3q^\lambda|\varphi|^2\,ds+\frac{h(\lambda)}{g(\lambda)}|\varphi(1)|^2\ge -d^2\|q^\lambda_-\|_\infty\|\varphi\|_{L^2_{s^3}}^2+\frac{h(\lambda)}{g(\lambda)}|\varphi(1)|^2\\
	&=-\|q^\lambda_-\|_\infty[u,u]+\frac{h(\lambda)-\|q^\lambda_-\|_\infty}{g(\lambda)}|\varphi(1)|^2.
	\end{align*}
	By assumption and since $\varphi(1)\neq 0$ (otherwise, also $\varphi'(1)=0$ and thus $\varphi\equiv 0$), it follows that
	\[(\mu+\|q^\lambda_-\|_\infty)[u,u]>0.\] Hence, $u$ cannot be neutral and $\mu$ has to be real. Since, additionally, by \cite{IKL} -- noting that $H$ is a $\pi_1$-space -- there exists exactly one nonpositive eigenvector of $K$, the first assertion follows immediately. The second statement is a direct consequence of the fact that all eigenvalues are real and no eigenvectors are neutral.
\end{proof}
\begin{remark}
	If \eqref{ass:realspec_algsimple} holds, then the assumptions of the next lemmas are satisfied. Moreover, we will discuss \eqref{ass:realspec_algsimple} later when looking at specific examples. Physically speaking, \eqref{ass:realspec_algsimple} is satisfied if the wave speed of the trivial solution at the surface is large compared to the angular components of the velocity and the vorticity (which depend on $\lambda$); more precisely, if
	\begin{align*}
		|c(\lambda)|&\notin[c_-,c_+],\\
		c_\pm&\coloneqq\frac14\left(d\omega^\vartheta-(u^\vartheta)^2\pm\sqrt{(d\omega^\vartheta-(u^\vartheta)^2)^2+8\sigma d(d^2\|\gamma'\|_\infty+\|(FF')'\|_\infty-d^{-2})}\right)
	\end{align*}
	(where the condition is regarded to be vacuous if $c_\pm$ are not real). In particular, if $\gamma$ and $FF'$ are bounded, this condition is satisfied if \enquote{$c(\lambda)$ is sufficiently large} or, provided additionally $F$ is bounded, if simply \enquote{$|c(\lambda)|$ is sufficiently large}.
\end{remark}

\subsection{Examples}
We now turn to a more detailed investigation of the conditions for local bifurcation for specific examples of $\gamma$ and $F$.
\subsubsection{No vorticity, no swirl}
As a first example, we consider the case without vorticity and swirl, that is, $\gamma=F=0$. By \eqref{eq:trivial_integral_equation} and \eqref{eq:psi^lambda}, the trivial solutions are given by
\[\psi^\lambda(s)=\lambda.\]
Thus,
\[c(\lambda)=2\lambda,\]
that is, $c(\lambda)\ne 0$ if and only if $\lambda\ne 0$. Moreover, $\tilde\beta=\tilde\beta^{-(k\nu)^2,\lambda}$ solves
\[\left(\partial_s^2+\frac3s\partial_s-(k\nu)^2d^2\right)\tilde\beta=0\text{ on }(0,1],\quad\tilde\beta_s(0)=0,\quad\tilde\beta(0)=1.\]
The general solution to the ODE is given by
\[\tilde\beta(s)=c_1\frac{I_1(k\nu ds)}{s}+c_2\frac{K_1(k\nu ds)}{s},\quad c_1,c_2\in\R,\]
where $I_1$ and $K_1$ are modified Bessel functions of the first and second kind. Since $K_1(x)\to\infty$ as $x\to 0$, we necessarily have $c_2=0$. Determining the remaining constant $c_1$ yields
\[\tilde\beta^{-(k\nu)^2,\lambda}(s)=\frac{2I_1(k\nu ds)}{k\nu ds}\]
and
\[\beta^{-(k\nu)^2,\lambda}(s)=\frac{I_1(k\nu ds)}{sI_1(k\nu d)}.\]
Therefore, using $dI_1/dx=I_0-I_1/x$ (cf. \cite{Amos}),
\[\beta_s^{-(k\nu)^2,\lambda}(1)=\SL\left(\frac{k\nu d\left(I_0(kds)-\frac{I_1(k\nu ds)}{k\nu ds}\right)}{sI_1(k\nu d)}-\frac{I_1(k\nu ds)}{s^2I_1(k\nu d)}\right)=\frac{k\nu dI_0(k\nu d)}{I_1(k\nu d)}-2.\]
Thus, we have
\[\D(-(k\nu)^2,\lambda)=\frac{k\nu dI_0(k\nu d)}{I_1(k\nu d)}+\frac{\sigma}{dc(\lambda)^2}(1-(k\nu)^2d^2).\]
Noticing that necessarily $(k\nu)^2d^2-1>0$ if $\D(-(k\nu)^2,\lambda)=0$, the dispersion relation $\D(-(k\nu)^2,\lambda)=0$ can hence be written as
\begin{align}\label{eq:disprel_no_vortswirl}
	\frac{\sigma}{c(\lambda)^2}=\frac{k\nu d^2I_0(k\nu d)}{((k\nu)^2d^2-1)I_1(k\nu d)}.
\end{align}
This dispersion relation was also obtained in \cite{VB-M-S}. Clearly, in order find solutions of \eqref{eq:disprel_no_vortswirl}, we can first choose arbitrary $\nu>0$, $k\in\N$ with $k\nu>1/d$ and then $\lambda$ such that \eqref{eq:disprel_no_vortswirl} holds. This gives exactly two possible choices $\pm\lambda_0$ for $\lambda$, which correspond to \enquote{mirrored} uniform laminar flows. It is important to notice that, given $c(\lambda)\neq 0$, \eqref{eq:disprel_no_vortswirl} is solved by at most one $k\in\N$; consequently, the kernel of $\LL(\lambda)$ is one-dimensional if this relation is satisfied for some $k\in\N$ and is trivial if it fails to hold for all $k$. Indeed, \eqref{eq:disprel_no_vortswirl} obviously cannot hold for $k\nu d\le 1$; moreover, the function \[g(x)\coloneqq\frac{xI_0(x)}{(x^2-1)I_1(x)},\quad x>1,\]
is strictly monotone on $(1,\infty)$ since
\[g'(x)=\frac{x(x^2-1)(I_1(x)^2-I_0(x)^2)-2I_0(x)I_1(x)}{(x^2-1)^2I_1(x)^2}<0,\quad x>1,\]
as $dI_0/dx=I_1$ and $I_0\ge I_1>0$ on $(0,\infty)$; see \cite{Amos}.

Furthermore, it is therefore clear that the transversality condition $\D_\lambda(-(k\nu)^2,\lambda)\ne 0$ always holds in view of $c_\lambda(\lambda)\ne 0$.

Moreover, it is easy to see that \eqref{ass:realspec_algsimple} is always satisfied since here $q^\lambda=0$ and $h(\lambda)=d^{-2}+\sigma^{-1}d^{-1}c(\lambda)^2>0$.
\subsubsection{Constant $\gamma$, no swirl}
Now let us assume that $\gamma\ne 0$ is a constant and $F=0$. By \eqref{eq:trivial_integral_equation} and \eqref{eq:psi^lambda}, the trivial solutions are given by
\[\psi^\lambda(s)=\lambda-\gamma\int_0^{sd}t^{-3}\int_0^t\tau^3\,d\tau\,dt=\lambda-\frac{\gamma d^2}{8}s^2.\]
Thus,
\begin{align}\label{eq:c_gammaconst}
	c(\lambda)=2\lambda-\frac{\gamma d^2}{2}
\end{align}
that is, $c(\lambda)\ne 0$ if and only if $\lambda\ne\frac{\gamma d^2}{4}$. Noticing that $\beta^{-(k\nu)^2,\lambda}$ is the same as in the previous example without vorticity, we moreover have
\[\D(-(k\nu)^2,\lambda)=\frac{k\nu dI_0(kd)}{I_1(k\nu d)}+\frac{\sigma}{dc(\lambda)^2}(1-(k\nu)^2d^2)+\frac{d^2\gamma}{c(\lambda)}.\]
In order to solve the equation $\D(-(k\nu)^2,\lambda)=0$ for $1/c(\lambda)$, we see that necessarily
\[1-(k\nu)^2d^2=0\]
or
\begin{align}\label{eq:const_vort_cond}
	1-(k\nu)^2d^2\le\frac{d^4\gamma^2I_1(k\nu d)}{4\sigma k\nu I_0(k\nu d)};
\end{align}
obviously, the first case can only occur if $1/(\nu d)\in\N$. We now want to reformulate the second case. Clearly, \eqref{eq:const_vort_cond} holds if $k\nu d\ge 1$. Let us consider $k\nu d<1$ further. The function
\[\chi(x)\coloneqq\frac{I_1(x)}{x(1-x^2)I_0(x)},\quad 0<x<1,\]
is positive and satisfies, using the result
\begin{align}\label{eq:Bessel_ratio_est}
	\frac{x}{1+\sqrt{x^2+1}}\le\frac{I_1(x)}{I_0(x)}\le\frac{x}{\sqrt{x^2+4}}
\end{align}
of \cite{Amos},
\begin{align*}
	\chi_x(x)&=\frac{(1-x^2)\left(x(1-(I_1(x)/I_0(x))^2)-2I_1(x)/I_0(x)\right)+2x^2I_1(x)/I_0(x)}{x^2(1-x^2)^2}\\
	&\ge\frac{(1-x^2)\left(x\left(1-\frac{x^2}{x^2+4}\right)-\frac{2x}{\sqrt{x^2+4}}\right)+\frac{2x^3}{1+\sqrt{x^2+1}}}{x^2(1-x^2)^2}>0,\qquad 0<x<1;
\end{align*}
here, the last inequality follows from the fact that the numerator is positive at $x=1$ and a nonzero root of it, after some algebra, has to satisfy 
\[36x^8+116x^6-64x^4-489x^2-224=0,\]
which can obviously not hold true for $x\in(0,1)$ in view of $36+116<224$. Therefore and because of $I_0(0)=1$, $I_1(0)=0$, and $I_1'(0)=1/2$, the function $\chi\colon (0,1)\to (1/2,\infty)$
is strictly monotonically increasing and onto. Hence, \eqref{eq:const_vort_cond} is always satisfied if $4\sigma d^{-5}\gamma^{-2}\le 1/2$. Otherwise, let $x_1\in(0,1)$ such that $\chi(x_1)=4\sigma d^{-5}\gamma^{-2}$ and $x_0\coloneqq x_1/d$. Thus, we have the equivalence
\[1-(k\nu)^2d^2\left\{\begin{matrix}<\\=\end{matrix}\right\}\frac{d^4\gamma^2I_1(k\nu d)}{4\sigma kI_0(k\nu d)}\Longleftrightarrow k\nu\left\{\begin{matrix}>\\=\end{matrix}\right\}x_0.\]
To conclude, solving $\D(-(k\nu)^2,\lambda)=0$ for $c(\lambda)$ yields
\begin{align}
	c(\lambda)&=-\frac{d^2\gamma I_1(1)}{I_0(1)}&\text{if }k\nu=1/d,\label{eq:disprel_const_vort_1}\\
	c(\lambda)&=\frac{2\sigma((k\nu)^2d^2-1)}{d\left(d^2\gamma\pm\sqrt{d^4\gamma^2+\frac{4\sigma k\nu((k\nu)^2d^2-1)I_0(k\nu d)}{I_1(k\nu d)}}\right)}&\text{if }8\sigma\le d^5\gamma^2,k\nu\ne 1/d,\nonumber\\
	&&\text{or }\text{if }8\sigma>d^5\gamma^2,k\nu\ge x_0,k\nu\ne 1/d,\label{eq:disprel_const_vort_2}
\end{align}
and else, $\D(-(k\nu)^2,\lambda)$ cannot vanish. Next, we search for solutions of \eqref{eq:disprel_const_vort_1} and \eqref{eq:disprel_const_vort_2}. First notice that in both cases it suffices to find appropriate $k$ and $c(\lambda)$ (and not $k$ and $\lambda$) since $\R\ni\lambda\mapsto c(\lambda)\in\R$ is bijective. Both for the first case (for which $1/(\nu d)\in\N$ is necessary) and for the second case, we can easily first choose an appropriate $k$
and then $c(\lambda)$ via \eqref{eq:disprel_const_vort_1} or \eqref{eq:disprel_const_vort_2}. The more interesting question is whether there can be multiple solutions for $k$ for fixed $\lambda$. Clearly, it suffices to focus on the second case. To this end, let us introduce $x=k\nu d$ and write \eqref{eq:disprel_const_vort_2} as $c(\lambda)=b^\pm(x)$ with
\[b^\pm(x)=\frac{2\sigma}{d^3\gamma}\frac{x^2-1}{1\pm\sqrt{1+\xi\frac{x(x^2-1)I_0(x)}{I_1(x)}}},\]
where
\[\xi\coloneqq\frac{4\sigma}{d^5\gamma^2}.\]
Here, $b^-(1)$ and possibly $b^\pm(0)$ are to be interpreted as the limit of the above expression as $x$ tends to $1$ or $0$; the limit for $x\to 1$ exists since $x=1$ is a simple root of both nominator and denominator, and the limit for $x\to 0$ also as $I_1(0)=0$ and $I_1'(0)=1/2$. Having clarified this, we see that $b^\pm$ is smooth on $(x_1,\infty)$ and continuous on $[x_1,\infty)$ if $\xi>1/2$, smooth on $(0,\infty)$ and continuous on $[0,\infty)$ if $\xi=1/2$, and smooth on $[0,\infty)$ if $\xi<1/2$. Now notice that it obviously suffices to consider $\gamma>0$ in the following without loss of generality.

We have
\begin{align}\label{eq:disp_rel_bpm}
	0=(b^\pm(x))^2\D(-(k\nu)^2,\lambda)=b^\pm(x)^2f(x)+\frac{\sigma}{d}(1-x^2)+d^2\gamma b^\pm(x),
\end{align}
where
\[f(x)\coloneqq\frac{xI_0(x)}{I_1(x)}.\]
Differentiating \eqref{eq:disp_rel_bpm} with respect to $x$ yields
\begin{align}
	b^\pm_x(2b^\pm f+d^2\gamma)&=-(b^\pm)^2f_x+\frac{2\sigma x}{d},\label{eq:b_x}\\
	b^\pm_{xx}(2b^\pm f+d^2\gamma)&=-4b^\pm b^\pm_xf_x-2(b^\pm_x)^2f-(b^\pm)^2f_{xx}+\frac{2\sigma}{d}.\label{eq:b_xx}
\end{align}
Thus, if $b^\pm_x=0$ at some $x>0$, then
\begin{align*}
	b^\pm_{xx}(2b^\pm f+d^2\gamma)=\frac{2\sigma}{d}-(b^\pm)^2f_{xx}=\frac{2\sigma}{d}\frac{f_x-xf_{xx}}{f_x}.
\end{align*}
Here, we notice that $f_x>0$ for $x>0$ because of
\[f_x(x)=x\left(1-\frac{I_0(x)/I_1(x)}{I_1(x)/I_2(x)}\right)\ge x\left(1-\frac{1+\sqrt{x^2+1}}{1+\sqrt{x^2+9}}\right)>0\]
due to \cite{Amos}. Moreover,
\begin{align}\label{eq:b_bracket_sign}
	2b^\pm(x)f(x)+d^2\gamma=\pm d^2\gamma\sqrt{1+\xi\frac{x(x^2-1)I_0(x)}{I_1(x)}}\gtrless 0.
\end{align}
Furthermore, we have
\begin{align}\label{eq:fx-xfxx}
	f_x(x)-xf_{xx}(x)=-\frac{2x^2I_0(x)^3}{I_1(x)^3}+\frac{4xI_0(x)^2}{I_1(x)^2}+\frac{2x^2I_0(x)}{I_1(x)}-2x>0.
\end{align}
Instead of presenting a lengthy, not very instructive proof of this inequality we provide a plot of the left-hand side (multiplied by a suitable positive function) in Figure \ref{fig:besselplot} in order to convince the reader of the validity of \eqref{eq:fx-xfxx}.
\begin{figure}[h!]
	\centering
	\includegraphics{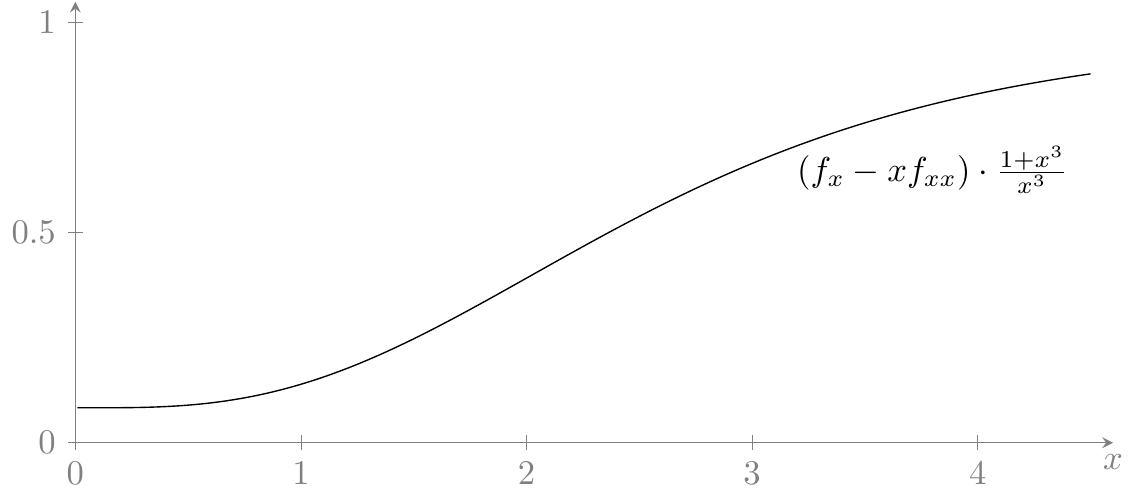}
	\caption{Demonstration of the validity of \eqref{eq:fx-xfxx}.}
	\label{fig:besselplot}
\end{figure}

Thus, putting everything together, $b^\pm_{xx}\gtrless 0$ provided $b^\pm_x=0$. In particular, $b^\pm$ can have at most one critical point on $(0,\infty)$, which, if it exists, has to be a local minimum (maximum). Since moreover $b^\pm$ tends to $\pm\infty$ as $x\to\infty$ by \eqref{eq:Bessel_ratio_est}, we conclude that the monotonicity properties of $b^\pm$ can be characterised by its behaviour near $0$ if $\xi\le1/2$ or near $x_1$ if $\xi>1/2$.

The easy case is $\xi>1/2$. Since
\[b^+(x_1)=b^-(x_1)=\frac{2\sigma(x_1^2-1)}{d^3\gamma},\quad\lim_{\substack{x\to x_1\\x>x_1}}b^\pm_x(x)=\pm\infty\]
due to $\chi_x(x_1)\ne 0$, we conclude that $b^\pm$ is strictly monotonically increasing (decreasing) on $[x_1,\infty)$ and $b^+((x_1,\infty))\cap b^-((x_1,\infty))=\emptyset$.

If $\xi=1/2$, we can argue similarly. Still we have $b^+(0)=b^-(0)=-2\sigma d^{-3}\gamma^{-1}$, but $b^\pm_x$ remains bounded as $x\to 0$. Indeed, from the Taylor expansion
\[1+\frac{x(x^2-1)I_0(x)}{2I_1(x)}=\frac78x^2+\mathcal O(x^4),\quad x\to 0,\]
we infer that
\[\lim_{\substack{x\to 0\\x>0}}b^\pm_x(x)=\pm\frac{2\sigma}{d^3\gamma}\cdot\frac{7/4}{2\sqrt{7/8}}\gtrless0.\]
Therefore, the same conclusions hold as before, namely, $b^\pm$ is strictly monotonically increasing (decreasing) on $[0,\infty)$ and $b^+((0,\infty))\cap b^-((0,\infty))=\emptyset$.

Let us now turn to the case $\xi<1/2$ and take a look at $x=0$. By \eqref{eq:b_xx}, \eqref{eq:b_bracket_sign}, and $b^\pm_x(0)=0$ because of evenness, we see that $b^\pm_{xx}(0)$ has the same sign as $\pm(2\sigma/d-(b^\pm(0))^2f_{xx}(0))$, or vanishes if and only if $2\sigma/d=(b^\pm(0))^2f_{xx}(0)$. Now
\[\frac{2\sigma}{d}-(b^\pm(0))^2f_{xx}(0)=\frac{2\sigma}{d}-\left(\frac{2\sigma}{d^3\gamma}\cdot\frac{-1}{1\pm\sqrt{1-2\xi}}\right)^2\cdot\frac12=\frac{\sigma}{4d\xi}(9\xi-1\pm\sqrt{1-2\xi})\eqqcolon\frac{\sigma}{4d\xi}g^\pm(\xi).\]
First, because of
\[g^+(0)=0,\quad g^+(1/2)=7/2>0,\quad g^+_{\xi\xi}(\xi)=-\frac{1}{(1-2\xi)^{3/2}}<0,\]
$g^+$ is positive on $(0,1/2)$. Therefore, for any $\xi<1/2$, $b^+$ is strictly monotonically increasing on $[0,\infty)$. Second, we have
\[g^-(0)=-2<0,\quad g^-(1/2)=7/2>0,\quad g^-_\xi(\xi)=9+\frac{1}{\sqrt{1-2\xi}}>0,\quad g^-(16/81)=0,\]
and thus
\[g^-(\xi)\begin{cases}<0,&0\le\xi<16/81,\\=0,&\xi=16/81,\\>0,&16/81<\xi<1/2.\end{cases}\]
Hence, $b^-$ is strictly monotonically decreasing on $[0,\infty)$ if $\xi>16/81$ and has exactly one local extremum (which is in fact a global maximum) if $\xi<16/81$. We moreover want to prove that $\max b^-<\min b^+$, that is, $\max b^-<b^+(0)$. To this end, first notice that $b^-<0$ on $[0,\infty)$ since both the nominator and denominator in the definition of $b^-$ have a simple root at $x=1$ and thus $b^-$ cannot have a zero. By \eqref{eq:b_x} we therefore have
\[\max b^-\le -\inf_{x>0}\sqrt{\frac{2\sigma x}{df_x(x)}}=-\sqrt{\frac{2\sigma}{d}}\inf_{x>0}\frac{1}{\sqrt{1-I_0(x)I_2(x)/I_1(x)^2}}\le-\sqrt{\frac{2\sigma}{d}}.\]
Hence,
\[b^+(0)=-\frac{2\sigma}{d^3\gamma(1+\sqrt{1-2\xi})}=-\sqrt{\frac{\sigma}{d}}\cdot\frac{\sqrt{\xi}}{1+\sqrt{1-2\xi}}>-\frac14\sqrt{\frac{\sigma}{d}}>-\sqrt{\frac{2\sigma}{d}}\ge\max b^-,\]
since $\xi\mapsto\sqrt\xi/(1+\sqrt{1-2\xi})$ is strictly monotonically increasing on $[0,16/81]$.

Let us now consider $\xi=16/81$. Differentiating \eqref{eq:b_xx} twice more, evaluating at $0$, and using $b^-_x(0)=b^-_{xx}(0)=b^-_{xxx}(0)=0$ yields
\[b^-_{xxxx}(0)(2b^-(0)f(0)+d^2\gamma)=-b^-(0)^2f_{xxxx}(0)=\frac14b^-(0)^2>0.\]
In particular, $b^-_{xxxx}(0)<0$; hence, $b^-$ is strictly monotonically decreasing.

To summarise, for fixed $\lambda$ we have therefore proved the following, provided $1/(\nu d)\notin\N$; below in Figure \ref{fig:plots_b} the respective cases are visualised:
\begin{itemize}
	\item If $\xi\ge 16/81$:
	\begin{itemize}
		\item The dispersion relation $\D(-(k\nu)^2,\lambda)=0$ can have at most one root $k\in\N$.
		\item If $16/81\le\xi<1/2$ and
		\[-\frac{2\sigma}{d^3\gamma(1-\sqrt{1-2\xi})}<c(\lambda)<-\frac{2\sigma}{d^3\gamma(1+\sqrt{1-2\xi})},\]
		the dispersion relation has no root.
	\end{itemize}
	\item If $\xi<16/81$:
	\begin{itemize}
		\item If
		\[c(\lambda)>-\frac{2\sigma}{d^3\gamma(1+\sqrt{1-2\xi})}\quad\text{or}\quad c(\lambda)=\max b^-\quad\text{or}\quad c(\lambda)\le-\frac{2\sigma}{d^3\gamma(1-\sqrt{1-2\xi})},\]
		the dispersion relation has at most one root.
		\item If
		\[\max b^-<c(\lambda)\le-\frac{2\sigma}{d^3\gamma(1+\sqrt{1-2\xi})},\]
		the dispersion relation has no root.
		\item If
		\[-\frac{2\sigma}{d^3\gamma(1-\sqrt{1-2\xi})}<c(\lambda)<\max b^-,\]
		the dispersion relation has at most two roots.
	\end{itemize}
\end{itemize}
\begin{figure}[h!]
	\centering
	\subfigure[$\xi\ge1/2$.]{\includegraphics[width=0.33\columnwidth]{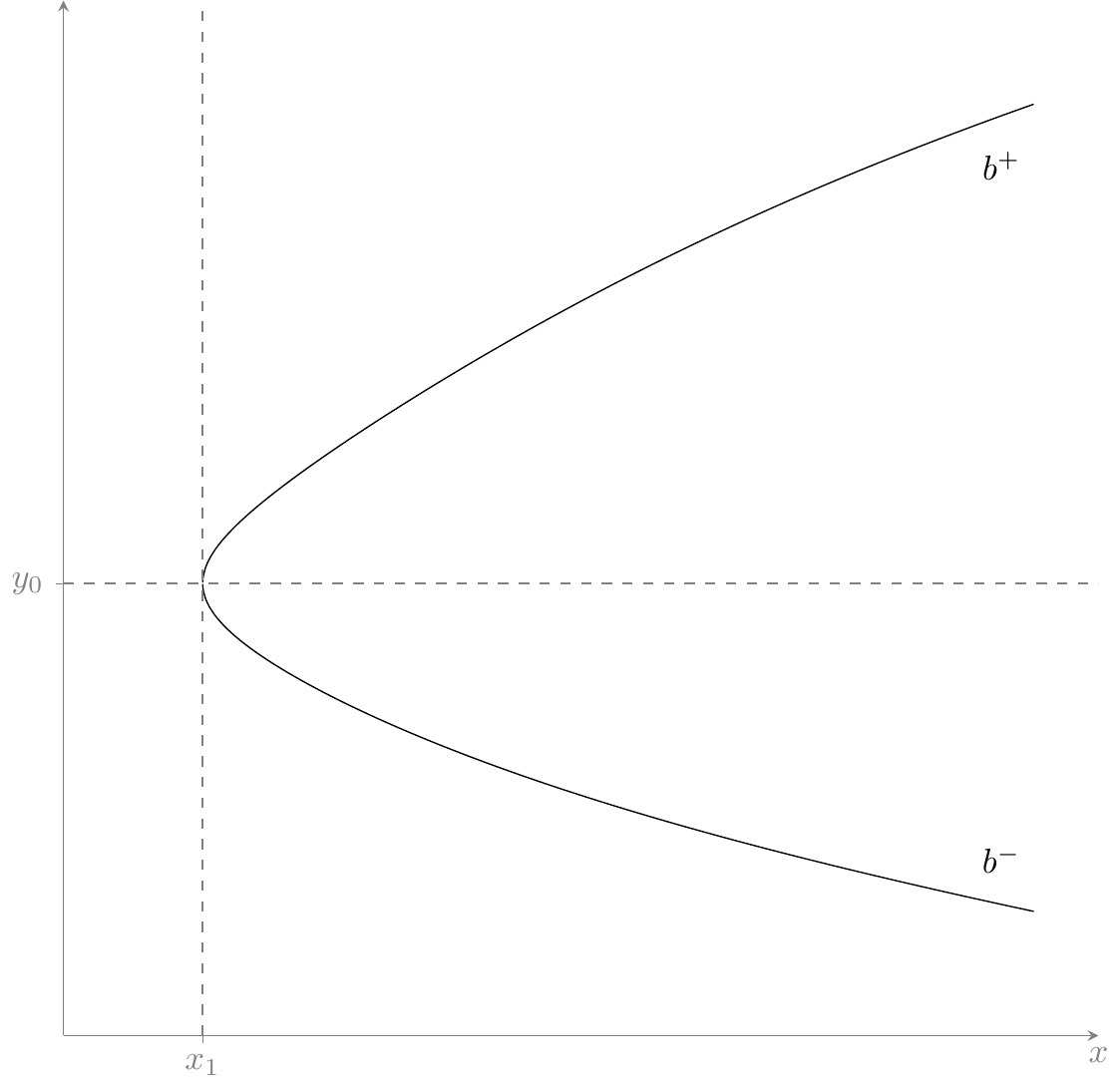}}\subfigure[$16/81\le\xi<1/2$.]{\includegraphics[width=0.33\columnwidth]{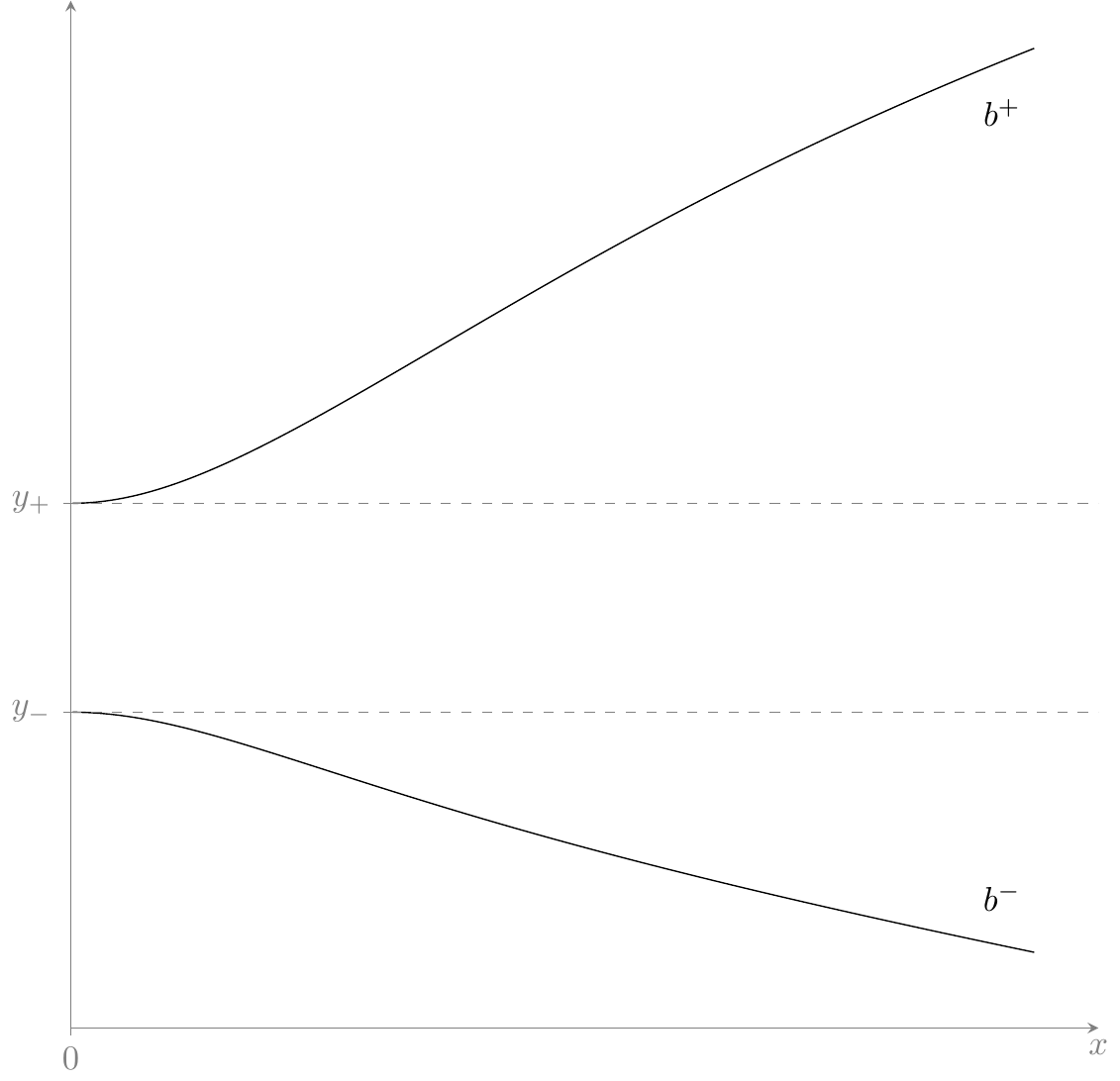}}\subfigure[$0<\xi<16/81$.]{\includegraphics[width=0.33\columnwidth]{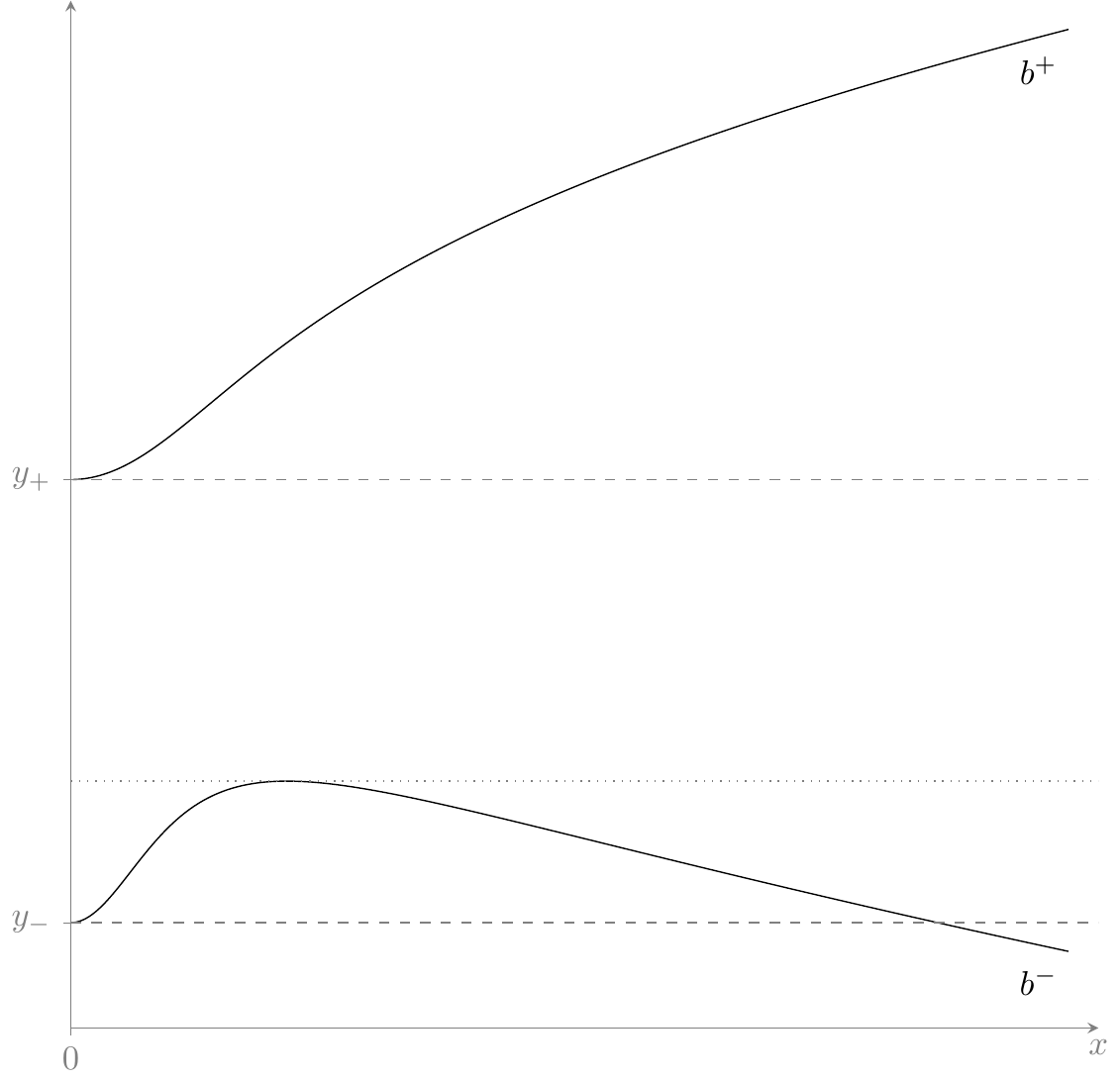}}
	\caption{Qualitative behaviour of $b^\pm$ for different $\xi$ (in the case $\gamma>0$). Here, $y_0\coloneqq2\sigma d^{-3}\gamma^{-1}(x_1^2-1)$ and $y_\pm\coloneqq-2\sigma d^{-3}\gamma^{-1}/(1\pm\sqrt{1-2\xi})$.}
	\label{fig:plots_b}
\end{figure}
If $1/(\nu d)\in\N$ and
\[c(\lambda)=-\frac{d^2\gamma I_1(1)}{I_0(1)},\]
there is the additional root $k=1/(\nu d)$.

If, however, $\gamma<0$, these statements remain true after reversing all inequalities in the conditions for $c(\lambda)$ and changing $\max b^-$ to $\min b^-$.

Next, let us turn to the transversality condition, fix $\lambda$, and assume that $\D(-(k\nu)^2,\lambda)=0$ has exactly one solution $k\in\N$. Since $c_\lambda(\lambda)\ne 0$, it holds that $\D_\lambda(-(k\nu)^2,\lambda)\ne 0$ if and only if $\xi\le1/2$ or $k\nu d>x_1$ otherwise.

Finally, we have a look at \eqref{ass:realspec_algsimple}. Here, $q^\lambda=0$ and $h(\lambda)=d^{-2}+2\sigma^{-1}d^{-1}\lambda(4\lambda-\gamma d^2)$ by \eqref{eq:c_gammaconst}. Therefore, $h(\lambda)>0$ for all $\lambda\in\R$ if $\gamma^2<8\sigma d^{-5}\Leftrightarrow\xi>\frac12$, and in the case $\xi\le\frac12$, $h(\lambda)>0$ if and only if $\lambda\notin[\lambda_-,\lambda_+]$ where
\[\lambda_\pm\coloneqq\frac{|\gamma|d^2(\sgn\gamma\pm\sqrt{1-2\xi})}{8}.\]

\section{Global bifurcation}\label{sec:GlobalBifurcation}
The theory for local bifurcation having set up, we now turn to global bifurcation, which is of course the main motivation of our formulation \enquote{identity plus compact}. To this end, we first state the global bifurcation theorem by Rabinowitz.
\begin{theorem}\label{thm:Rabinowitz}
	Let $X$ be a Banach space, $U\subset\R\times X$ open, and $\F\in C(U;X)$. Assume that $\F$ admits the form $\F(\lambda,x)=x+f(\lambda,x)$ with $f$ compact, and that $\F_x(\cdot,0)\in C(\R;L(X,X))$. Moreover, suppose that $\F(\lambda_0,0)=0$ and that $\F_x(\lambda,0)$ has an odd crossing number at $\lambda=\lambda_0$. Let $S$ denote the closure of the set of nontrivial solutions of $\F(\lambda,x)=0$ in $\R\times X$ and $\mathcal C$ denote the connected component of $S$ to which $(\lambda_0,0)$ belongs. Then one of the following alternatives occurs:
	\begin{enumerate}[label=(\roman*)]
		\item $\mathcal C$ is unbounded;
		\item $\mathcal C$ contains a point $(\lambda_1,0)$ with $\lambda_1\neq\lambda_0$;
		\item $\mathcal C$ contains a point on the boundary of $U$.
	\end{enumerate}
\end{theorem}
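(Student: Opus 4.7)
The plan is to use Leray--Schauder degree theory together with a topological separation argument, which is the standard route to Rabinowitz's global alternative. I would proceed by contradiction: assume none of (i), (ii), (iii) holds, so that $\mathcal C$ is bounded, has positive distance from $\partial U$, and meets the trivial branch $\R\times\{0\}$ only at $(\lambda_0,0)$. The goal is to construct an isolating bounded open neighbourhood of $\mathcal C$ on whose boundary $\F(\lambda,\cdot)$ does not vanish, and then derive a contradiction from the degree of $\F(\lambda,\cdot)$ on parameter slices.

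First I would show that $\mathcal C$ is compact. Since $f$ is compact and any $(\lambda,x)\in S$ satisfies $x=-f(\lambda,x)$, the solution set $S$ is locally compact; any bounded closed piece of $S$ staying uniformly away from $\partial U$ is in fact compact, and under the contradiction hypothesis $\mathcal C$ is such a piece. I would then invoke Whyburn's separation lemma: if two disjoint compact subsets of a compact metric space share no connected component, the space decomposes disjointly into two compact pieces separating them. Applied to a sufficiently large compact chunk of $S$ containing $\mathcal C$, this yields an isolating compact neighbourhood of $\mathcal C$ in $S$, which can be thickened to a bounded open set $\Omega\subset U$ with $\mathcal C\subset\Omega$, $\overline\Omega\subset U$, and $\partial\Omega\cap S=\emptyset$. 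A further surgical modification near $(\lambda_0,0)$ removes that single trivial solution from $\overline\Omega$ while keeping the nontrivial points of $\mathcal C$ near $(\lambda_0,0)$ inside $\Omega$, and it also arranges that $0\notin\partial\Omega_\lambda$ for every $\lambda$.

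The heart of the argument is the degree computation. For each $\lambda$ in the projection $\pi(\Omega)\subset\R$, the slice $\Omega_\lambda=\{x:(\lambda,x)\in\Omega\}$ is a bounded open subset of $X$, and $\mathrm{id}+f(\lambda,\cdot)$ has no zero on $\partial\Omega_\lambda$; hence the Leray--Schauder degree $d(\mathrm{id}+f(\lambda,\cdot),\Omega_\lambda,0)$ is well defined and, by homotopy invariance along parameter intervals on which $\partial\Omega_\lambda$ is zero-free, constant in $\lambda$. For $\lambda_-<\lambda_0<\lambda_+$ both close to $\lambda_0$, the only solution of $\F(\lambda_\pm,x)=0$ in $\Omega_{\lambda_\pm}$ near the trivial branch is $x=0$, so excision splits the slice degree into a constant contribution from $\Omega_{\lambda_\pm}\setminus \overline{B_\varepsilon(0)}$ plus the local index $d(\mathrm{id}+f(\lambda_\pm,\cdot),B_\varepsilon(0),0)$, and the latter equals the Leray--Schauder index of $\F_x(\lambda_\pm,0)$ at the origin by linearization. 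The odd-crossing-number assumption says precisely that this index changes by an odd integer as $\lambda$ crosses $\lambda_0$, which contradicts the constancy of the total slice degree.

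The main obstacle is the topological construction of $\Omega$: one must verify that the separation lemma yields an open neighbourhood of $\mathcal C$ whose boundary genuinely avoids every point of $S$, and then carry out the surgery near $(\lambda_0,0)$ so that neither the trivial solution nor any local nontrivial solution ends up on $\partial\Omega$, while keeping $\pi(\Omega)$ an interval so that the homotopy invariance of the degree can be applied between $\lambda_-$ and $\lambda_+$. Once $\Omega$ has been built, the degree comparison is essentially routine and closes the contradiction.
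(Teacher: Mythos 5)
The paper itself does not reprove this theorem: its ``proof'' is a citation of \cite[Theorem II.3.3]{Kielhoefer} together with the observation that passing from $U=\R\times X$ to a general open $U$ only adds alternative (iii). Your overall strategy --- local compactness of the solution set, Whyburn's separation lemma to produce an isolating bounded open set $\Omega$ with $S\cap\partial\Omega=\emptyset$, and a Leray--Schauder degree count on the slices $\Omega_\lambda$ --- is indeed the strategy of that classical proof. However, your degree computation has a genuine gap precisely at the step where the classical argument is delicate.

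First, the ``surgery'' is impossible as described: since $(\lambda_0,0)\in\mathcal C\subset S$ is a limit of nontrivial solutions, any open $\Omega\supset\mathcal C$ must contain $(\lambda_0,0)$ and hence a whole segment $(\lambda_0-\delta,\lambda_0+\delta)\times\{0\}$ of the trivial line; the endpoints $(\lambda_0\pm\delta,0)$ then necessarily lie on $\partial\Omega$ and are zeros of $\F$. One can arrange $0\notin\partial(\Omega_\lambda)$ for every $\lambda$ (a lens-shaped $\Omega$ near the trivial line), but the generalized homotopy invariance requires $\F\neq 0$ on the full boundary $\partial\Omega$, not merely on the slice boundaries $\partial(\Omega_\lambda)$; it therefore cannot be applied across $\lambda_0\pm\delta$, and the total slice degree genuinely jumps there. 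Second, and fatally for your concluding step, the assertion that $d(\F(\lambda,\cdot),\Omega_\lambda\setminus\overline{B_\varepsilon(0)},0)$ gives ``a constant contribution'' at $\lambda_-$ and $\lambda_+$ is unjustified: the homotopy $\lambda\mapsto\Omega_\lambda\setminus\overline{B_\varepsilon(0)}$ on $[\lambda_-,\lambda_+]$ is admissible only if no nontrivial solution with $\|x\|=\varepsilon$ occurs for $\lambda$ between $\lambda_-$ and $\lambda_+$, and this is exactly what cannot be excluded --- the continuum bifurcating from $(\lambda_0,0)$ meets every small sphere $\partial B_\varepsilon(0)$ at parameters arbitrarily close to $\lambda_0$. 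Were these two points ignorable, your argument would prove the global alternative for even crossing numbers as well, which is false (a closed loop of nontrivial solutions tangent to the trivial line at a point with even crossing number is the standard counterexample). The correct argument runs the other way: one shows $d(\F(\lambda,\cdot),\Omega_\lambda,0)=0$ for $|\lambda-\lambda_0|>\delta$ by homotopy towards $\lambda=\pm\infty$ (where the slices are empty), computes the jump of this degree at $\lambda=\lambda_0\pm\delta$ to be exactly the index $i(\F_x(\lambda_0\pm\delta,0))$ --- a purely local computation, legitimate because $\F_x(\lambda_0\pm\delta,0)$ is invertible and so, by the implicit function theorem, there are no nontrivial solutions near $(\lambda_0\pm\delta,0)$ --- and finally uses constancy of the degree on the open interval $(\lambda_0-\delta,\lambda_0+\delta)$ (where it is admissible) to conclude $i(\F_x(\lambda_0-\delta,0))=i(\F_x(\lambda_0+\delta,0))$, contradicting the odd crossing number.
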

The proof of this theorem in the case $U=X$ can be found in \cite[Theorem II.3.3]{Kielhoefer} and is practically identical to the proof for general $U$.

Now we can prove the following result.

\begin{theorem}\label{thm:GlobalBifurcation}
	Assume \eqref{ass:SL-spectrum} and that there exists $\lambda_0\neq 0$ such that the dispersion relation
	\[\D(-(k\nu)^2,\lambda_0)=0,\]
	with $\D$ given by \eqref{eq:d(k,lambda)}, has exactly one solution $k_0\in\N$ and assume that the transversality condition
	\[\D_\lambda(-(k_0\nu)^2,\lambda_0)\neq 0\]
	holds. Let $S$ denote the closure of the set of nontrivial solutions of $\F(\lambda,\eta,\phi)=0$ in $\R\times X$ and $\mathcal C$ denote the connected component of $S$ to which $(\lambda_0,0,0)$ belongs. Then one of the following alternatives occurs:
	\begin{enumerate}[label=(\roman*)]
		\item $\mathcal C$ is unbounded in the sense that there exists a sequence $(\lambda_n,\eta_n,\phi_n)\in\mathcal C$ such that
		\begin{enumerate}[label=(\alph*)]
			\item $|\lambda_n|\to\infty$, or
			\item $\|\eta_n\|_{C^{2,\alpha}([0,L])}\to\infty$, or
			\item $\|r^{2/p}\gamma(\Psi_n)+r^{2/p-2}F(\Psi_n)F'(\Psi_n)\|_{L^p(Z_{\eta_n})}\to\infty$ with $p\coloneqq\frac{5}{2-\alpha}$, where $Z_{\eta_n}$ denotes a $L$-periodic instance of the axially symmetric fluid domain in $\R^3$ corresponding to $\eta_n$ and $\Psi_n=r^2\left(\left(\phi_n+\frac{d^2}{(d+\eta_n)^2}\psi^{\lambda_n}\right)\circ H[\eta_n]^{-1}\right)$ is the corresponding original Stokes stream function,
		\end{enumerate}
		as $n\to\infty$;
		\item $\mathcal C$ contains a point $(\lambda_1,0,0)$ with $\lambda_1\neq\lambda_0$;
		\item $\mathcal C$ contains a sequence $(\lambda_n,\eta_n,\phi_n)$ such that $\eta_n$ converges to some $\eta$ in $C^{2,\beta}_{0,\per,\e}(\R)$ for any $\beta\in(0,\alpha)$ and such that there exists $z\in[0,L]$ with
		\[\eta(z)=-d,\]
		that is, intersection of the surface profile with the cylinder axis occurs.
	\end{enumerate}
\end{theorem}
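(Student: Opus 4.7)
The plan is to apply Rabinowitz's global bifurcation theorem (Theorem \ref{thm:Rabinowitz}) to $\F\colon\R\times\U\to X$ with $U=\R\times\U$, starting from $(\lambda_0,0,0)$. Lemma \ref{lma:M_prop} ensures that $\F$ is $C^2$ and that $\M$ is compact on $\R\times\U_\varepsilon$ for every $\varepsilon>0$, which supplies the compactness required by Rabinowitz on subsets of $\R\times\U$ that stay away from $\partial\U$. The map $\lambda\mapsto\F_{(\eta,\phi)}(\lambda,0,0)$ is continuous with values in the Fredholm operators of index zero. By Lemma \ref{lma:kernel} and the assumption that $\D(-(k\nu)^2,\lambda_0)=0$ has the unique solution $k=k_0$, this linearization has a one-dimensional kernel at $\lambda_0$, and Lemma \ref{lma:transversality_condition} together with $\D_\lambda(-(k_0\nu)^2,\lambda_0)\ne 0$ yields the Crandall--Rabinowitz transversality condition. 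A standard computation then shows that the Leray--Schauder index of $I-\M(\lambda,\cdot)$ at $0$ jumps by $\pm 1$ across $\lambda_0$, so that $\F_{(\eta,\phi)}(\cdot,0,0)$ has an odd crossing number at $\lambda_0$.

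Rabinowitz's theorem therefore yields a connected component $\mathcal C$ of the closure of nontrivial solutions through $(\lambda_0,0,0)$ satisfying one of (I) $\mathcal C$ is unbounded in $\R\times X$; (II) $\mathcal C$ meets the trivial branch at another point $(\lambda_1,0,0)$; or (III) $\mathcal C$ accumulates on $\partial(\R\times\U)$. Alternative (II) is exactly (ii). The boundary $\partial\U$ consists of $(\eta,\phi)\in X$ with $\min_\R(d+\eta)=0$, so (III) means there is a sequence $(\lambda_n,\eta_n,\phi_n)\in\mathcal C$ with $\min_\R(d+\eta_n)\to 0$; passing to a subsequence and using that bounded sets of $X$ are precompact in $\R\times C^{2,\beta}_{0,\per,\e}(\R)\times C^{0,\beta}_{\per,\e}(\overline{\Omega_0})$ for any $\beta<\alpha$, one obtains the convergence and axis intersection stated in (iii).

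It remains to refine alternative (I) into (i). Suppose, for contradiction, that no sequence in $\mathcal C$ realises any of (a), (b), or (c); then there is a uniform bound $|\lambda_n|+\|\eta_n\|_{C^{2,\alpha}([0,L])}+\|r^{2/p}\gamma(\Psi_n)+r^{2/p-2}F(\Psi_n)F'(\Psi_n)\|_{L^p(Z_{\eta_n})}\le M$, and since (III) also fails, $d+\eta_n\ge\varepsilon>0$ uniformly in $n$. Hence $L^{\eta_n}$ is uniformly elliptic with uniformly $C^{0,\alpha}$-coefficients, and the correction term $L^{\eta_n}(d^2\I\psi^{\lambda_n}/(d+\eta_n)^2)$ in \eqref{eq:A_PDE} is uniformly bounded in $L^p(\Omega_0^\I)$. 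The weighted norm in (c) was chosen precisely so that the remaining part of the right-hand side of \eqref{eq:A_PDE} is also bounded in $L^p(\Omega_0^\I)$: the five-dimensional volume element is $r^3\,dr\,dz$ times the measure on $S^3$, so the unweighted $L^p(\Omega_0^\I)$-norm of $\gamma(\Psi)+r^{-2}F(\Psi)F'(\Psi)$ equals, up to a constant and the bounded Jacobian of the flattening, the $L^p(Z_{\eta_n})$-norm of $r^{2/p}\gamma(\Psi)+r^{2/p-2}F(\Psi)F'(\Psi)$. Standard $W^{2,p}$ estimates then give $\|\I\phi_n\|_{W^{2,p}(\Omega_0^\I)}\le C$, and since $p=5/(2-\alpha)>5/2$, Morrey's embedding $W^{2,p}(\R^5)\hookrightarrow C^{1,\alpha}$ bounds $\|\phi_n\|_{C^{0,\alpha}}$ and $\|\I\phi_n\|_{H^1}$ uniformly. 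Therefore $(\lambda_n,\eta_n,\phi_n)$ is bounded in $\R\times X$, contradicting the unboundedness of $\mathcal C$.

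The hardest step is this final elliptic argument, specifically the matching between the weighted $L^p$-norm on the three-dimensional fluid domain in (c) and the unweighted $L^p$-norm of the right-hand side of the five-dimensional PDE for $\I\phi_n$; this matching is exactly what forces the choice $p=5/(2-\alpha)$, so that Morrey's embedding delivers a $C^{0,\alpha}$ bound and hence an $X$-bound on $(\eta_n,\phi_n)$. A minor point to keep in mind is that the compactness of $\M$ is only asserted on each $\U_\varepsilon$, but this is harmless in Rabinowitz's argument since the alternative (III) captures precisely the failure of such a uniform compactness as $\varepsilon\to 0$.
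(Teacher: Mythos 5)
Your proposal is correct and follows essentially the same route as the paper: odd crossing number from the Crandall--Rabinowitz data, Rabinowitz' theorem applied on the sets $\R\times\U_\varepsilon$ (where $\M$ is compact) with the boundary alternative turning into the axis-intersection alternative (iii), and the refinement of unboundedness via the Calder\'on--Zygmund $W^{2,p}$ estimate for \eqref{eq:A_PDE} combined with the change of variables that identifies the unweighted five-dimensional $L^p$-norm of the right-hand side with the weighted three-dimensional norm in (i)(c). One small correction: for $p=\frac{5}{2-\alpha}\in(\tfrac52,5)$ Morrey's embedding gives $W^{2,p}(\Omega_0^\I)\hookrightarrow C^{0,2-5/p}=C^{0,\alpha}$, not $C^{1,\alpha}$; this is exactly the embedding you actually use, so the conclusion is unaffected.
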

\begin{proof}
	As was already observed in Lemma \ref{lma:M_prop}, our nonlinear operator $\F$ is of class $C^2$ and admits the form \enquote{identity plus compact} on each $\R\times\U_\varepsilon$, $\varepsilon>0$. Moreover, it is well-known that $F_{(\eta,\phi)}(\lambda,\eta,\phi)$ has an odd crossing number at $(\lambda_0,0,0)$ provided $F_{(\eta,\phi)}(\lambda_0,0,0)$ is a Fredholm operator with index zero and one-dimensional kernel, and the transversality condition holds. These properties, in turn, are consequences of the hypotheses of the theorem in view of Lemmas \ref{lma:kernel} and \ref{lma:transversality_condition} since $F_{(\eta,\phi)}(\lambda_0,0,0)$ coincides with $\LL(\lambda_0)$ up to an isomorphism. For each $\varepsilon>0$, we can thus apply Theorem \ref{thm:Rabinowitz} with $U$ chosen to be the interior of $\R\times\U_\varepsilon$. Thus, on each $\R\times\U_\varepsilon$, $\mathcal C$ coincides with its counterpart obtained from Theorem \ref{thm:Rabinowitz}. Since $\varepsilon>0$ is arbitrary and $\R\times\U=\bigcup_{\varepsilon>0}(\R\times\U_\varepsilon)$, it is evident that necessarily 
	\begin{align}\label{eq:alternative_inf}
		\inf_{(\lambda,\eta,\phi)\in\Cc}\min_\R(\eta+d)=0
	\end{align}
	whenever $\mathcal C$ is bounded in $\R\times X$ and (ii) fails to hold.
	
	Let us investigate alternative (i) further. In order to show that it can be as stated above, we show that, in view of alternative (i) of Theorem \ref{thm:Rabinowitz}, $\mathcal C$ is bounded in $\R\times X$ if (i)(a)--(c) and \eqref{eq:alternative_inf} fail to hold. Indeed, along $\mathcal C$ we have $\phi=\A(\lambda,\eta,\phi)$ and, since \eqref{eq:alternative_inf} does not hold, $\eta+d\ge\varepsilon$ uniformly for some $\varepsilon>0$. Thus,
	\begin{align*}
	&\|\phi\|_{C^{0,\alpha}_\per(\overline{\Omega_0})}+\|\I\phi\|_{H^1_\per(\Omega_0^\I)}\le\|\I\phi\|_{C^{0,\alpha}_\per(\overline{\Omega_0^\I})}+\|\I\phi\|_{H^1_\per(\Omega_0^\I)}\le C\|\I\phi\|_{W^{2,p}(\tilde\Omega_0^\I)}\\
	&\le C\Bigg(\|\eta\|_{C^{2,\alpha}([0,L])},\varepsilon^{-1},\Bigg\|\gamma\left((d+\eta)^2|y|^2\left(\I\phi+\frac{d^2}{(d+\eta)^2}\I\psi^\lambda\right)\right)\\
	&\phantom{\le\;c\Bigg(\|\eta\|_{C^{2,\alpha}([0,L])},}+\frac{1}{(d+\eta)^2|y|^2}(FF')\left((d+\eta)^2|y|^2\left(\I\phi+\frac{d^2}{(d+\eta)^2}\I\psi^\lambda\right)\right)+L^\eta\frac{d^2\I\psi^\lambda}{(d+\eta)^2}\Bigg\|_{L^p(\tilde\Omega_0^\I)}\Bigg)\\
	&\le C\Bigg(\|\eta\|_{C^{2,\alpha}([0,L])},\varepsilon^{-1},|\lambda|,\Bigg\|s^{3/p}\Bigg[\gamma\left((d+\eta)^2s^2\left(\phi+\frac{d^2}{(d+\eta)^2}\psi^\lambda\right)\right)\\
	&\omit\hfill$\displaystyle+\frac{1}{(d+\eta)^2s^2}(FF')\left((d+\eta)^2s^2\left(\phi+\frac{d^2}{(d+\eta)^2}\psi^\lambda\right)\right)\Bigg]\Bigg\|_{L^p(\tilde\Omega_0)}\Bigg)$\\
	&\le C\Bigg(\|\eta\|_{C^{2,\alpha}([0,L])},\varepsilon^{-1},|\lambda|,\Bigg\|\Bigg(s^{3/p}\Bigg[\gamma\left((d+\eta)^2s^2\left(\phi+\frac{d^2}{(d+\eta)^2}\psi^\lambda\right)\right)\\
	&\omit\hfill$\displaystyle+\frac{1}{(d+\eta)^2s^2}(FF')\left((d+\eta)^2s^2\left(\phi+\frac{d^2}{(d+\eta)^2}\psi^\lambda\right)\right)\Bigg]\Bigg)\circ H[\eta]^{-1}\Bigg\|_{L^p(\tilde\Omega_\eta)}\Bigg)$\\
	&\le C\left(\|\eta\|_{C^{2,\alpha}([0,L])},\varepsilon^{-1},|\lambda|,\left\|r^{3/p}\left[\gamma(\Psi)+\frac{1}{r^2}(FF')(\Psi)\right]\right\|_{L^p(\tilde\Omega_\eta)}\right)\\
	&\le C\left(\|\eta\|_{C^{2,\alpha}([0,L])},\varepsilon^{-1},|\lambda|,\left\|r^{2/p}\gamma(\Psi)+r^{2/p-2}(FF')(\Psi)\right\|_{L^p(Z_\eta)}\right)
	\end{align*}
	after using Sobolev's embedding, the Calderón--Zygmund inequality (see \cite[Chapter 9]{GilbargTrudinger}; notice that on the right-hand side the term $\|\I\phi\|_{L^p(\tilde\Omega_0^\I)}$ can be left out because of unique solvability of the Dirichlet problem associated to $L^\eta$), and changes of variables via $H[\eta]$ and via cylindrical coordinates in $\R^5$ and $\R^3$, and where $\tilde\Omega_\eta$ denotes a periodic instance of $\Omega_{\eta}=H[\eta](\Omega_0)$ and $\Psi$, $Z_\eta$ are analogously defined as in the statement of (c); here, the constant $C>0$ can change in each step.
	
	Finally, we turn to alternative (iii). If \eqref{eq:alternative_inf} holds, but not (i)(b), then clearly we find a sequence as described in (iii) due to the compact embedding of $C_{0,\per,\e}^{2,\alpha}(\R)$ in $C_{0,\per,\e}^{2,\beta}(\R)$.
\end{proof}
\begin{remark}
	Alternative (i)(c) says that the angular component of the vorticity, in general given by $\omega^\vartheta=\vec{\omega}\cdot\vec{e}_\vartheta=-r\gamma(\Psi)-(FF')(\Psi)/r$, satisfies $\|r^{2/p-1}\omega^\vartheta_n\|_{L^p(Z_{\eta_n})}\to\infty$ as $n\to\infty$.
\end{remark}
We also have the following.
\begin{proposition}
	In Theorem \ref{thm:GlobalBifurcation} the alternative (i)(b) can be replaced by
	\begin{enumerate}[leftmargin=40pt]
		\item[(i)(b')]
		\begin{enumerate}[label=(\greek*)]
			\item $\|\eta_n\|_{C^{1,\alpha}([0,L])}\to\infty$, or
			\item $\||\vec{u}_n|^2\|_{C^{0,\alpha}(S_n)}\to\infty$ (the square of the velocity [the kinetic energy density] is unbounded in $C^{0,\alpha}$ at the free surface $S_n$), or
			\item $|Q(\lambda_n,\eta_n,\phi_n)|\to\infty$ (the Bernoulli constant is unbounded).
		\end{enumerate}
	\end{enumerate}
\end{proposition}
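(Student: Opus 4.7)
The plan is to argue by contradiction, showing that alternative (i)(b) can only occur along a sequence in $\mathcal{C}$ if one of the sub-alternatives of (i)(b') (or one of (i)(a), (i)(c), (iii)) also occurs. So suppose along a sequence $(\lambda_n,\eta_n,\phi_n)\in\mathcal{C}$ one has $\|\eta_n\|_{C^{2,\alpha}([0,L])}\to\infty$ (alternative (i)(b)), while (i)(a), (iii), and all three of ($\alpha$), ($\beta$), ($\gamma$) fail, i.e., $|\lambda_n|\le M$, $d+\eta_n\ge\varepsilon>0$ uniformly, $\|\eta_n\|_{C^{1,\alpha}([0,L])}\le M$, $\||\vec{u}_n|^2\|_{C^{0,\alpha}(S_n)}\le M$, and $|Q(\lambda_n,\eta_n,\phi_n)|\le M$. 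I aim to derive a contradiction with $\|\eta_n\|_{C^{2,\alpha}}\to\infty$.

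The key observation is that, comparing Bernoulli's equation \eqref{eq:OriginalEquation_psi_Bernoulli} with the velocity expression \eqref{eq:relation_velocity_psi} and using the surface identity $\psi=m/r^2$, the dynamic surface condition collapses to the physically transparent form
\[\sigma\kappa[\eta]=\frac{|\vec{u}|^2}{2}-Q\quad\text{on }S.\]
Substituting the explicit formula for $\kappa[\eta]$ and solving for $\eta_{zz}$ yields
\[\eta_{zz}=(1+\eta_z^2)^{3/2}\left(\sigma^{-1}\left(\frac{|\vec{u}|^2}{2}-Q\right)+\frac{1}{(d+\eta)\sqrt{1+\eta_z^2}}\right)\quad\text{on }s=1.\]

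Each ingredient on the right-hand side is now controlled: the $C^{1,\alpha}$-bound on $\eta_n$ makes $(1+\eta_{n,z}^2)^{3/2}$ uniformly bounded in $C^{0,\alpha}([0,L])$ and keeps the parametrisation $z\mapsto(d+\eta_n(z),z)$ of $S_n$ uniformly Lipschitz, so the pullback of $|\vec{u}_n|^2|_{S_n}$ to $[0,L]$ remains bounded in $C^{0,\alpha}$ (composition of a Hölder function with a Lipschitz map). The uniform lower bound $d+\eta_n\ge\varepsilon$ combined with the $C^{1,\alpha}$-bound handles the remaining term. Hence $\eta_{n,zz}$ is uniformly bounded in $C^{0,\alpha}([0,L])$, so $\|\eta_n\|_{C^{2,\alpha}([0,L])}$ stays bounded, contradicting our assumption. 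Passing to a subsequence, at least one of ($\alpha$), ($\beta$), ($\gamma$) must therefore be unbounded, giving a realisation of (i)(b').

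The only non-trivial step is the pullback of $|\vec{u}_n|^2$ from the varying surface $S_n$ to $[0,L]$ while preserving the $C^{0,\alpha}$-norm; this is routine given the $C^{1,\alpha}$-bound on $\eta_n$, since composition of a Hölder function with a Lipschitz map remains Hölder with a norm controlled by the product $\|f\|_{C^{0,\alpha}(S_n)}\|\phi_n\|_{C^{0,1}}^{\alpha}$. Everything else is a direct reading-off of bounds from the displayed formula for $\eta_{zz}$.
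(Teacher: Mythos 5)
Your proof is correct and takes essentially the same route as the paper's: the paper's entire argument is the single sentence that the claim ``follows easily from the Bernoulli equation
\[Q(\lambda,\eta,\phi)=\tfrac12|\vec{u}|^2-\sigma\Bigl(\tfrac{\eta_{zz}}{(1+\eta_z^2)^{3/2}}-\tfrac{1}{(d+\eta)\sqrt{1+\eta_z^2}}\Bigr)\]
at the free surface,'' and you have simply expanded this into the intended contrapositive, solving for $\eta_{zz}$ and reading off a $C^{0,\alpha}$-bound from the bounds furnished by the failure of $(\alpha)$, $(\beta)$, $(\gamma)$. One caveat: your uniform lower bound $d+\eta_n\ge\varepsilon$, which is genuinely needed for the term $1/((d+\eta_n)\sqrt{1+\eta_{n,z}^2})$, does not literally follow from the failure of alternative (iii), since (iii) as stated requires $C^{2,\beta}$-convergence of $\eta_n$ and hence its negation does not exclude $\min_\R(d+\eta_n)\to 0$ along a $C^{2,\alpha}$-unbounded sequence; this point is suppressed in the paper's one-line proof as well, and a fully rigorous statement would either restrict to the branch of the proof of Theorem~\ref{thm:GlobalBifurcation} where \eqref{eq:alternative_inf} fails or weaken (iii) to the mere occurrence of $\inf_{\Cc}\min_\R(\eta+d)=0$.
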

\begin{proof}
	This follows easily from the Bernoulli equation
	\[Q(\lambda,\eta,\phi)=\frac12|\vec{u}|^2-\sigma\left(\frac{\eta_{zz}}{(1+\eta_z^2)^{3/2}}-\frac{1}{(d+\eta)\sqrt{1+\eta_z^2}}\right)\]
	at the free surface.
\end{proof}

\small{\textbf{Acknowledgements.} 
This project has received funding from the European Research Council (ERC) under the European Union’s Horizon 2020 research and innovation programme (grant agreement no 678698).

A.E., supported in 2020 by the Kristine Bonnevie scholarship 2020 of the Faculty of Mathematics and Natural Sciences, University of Oslo, during his research stay at Lund University, wishes to thank Erik Wahl\'en and the Centre of Mathematical Sciences, Lund University for hosting him. A.E. was partially supported by the DFG under Germany's Excellence Strategy – MATH$^+$: The Berlin Mathematics Research Center (EXC-2046/1 – project ID: 390685689) via the project AA1-12$^*$}
 
 \bibliographystyle{siam}
\bibliography{bib_axisym}

\end{document}